\numberwithin{equation}{section}
\def\RR{{\mathbb R}}
\def\MM{{\mathbb M}^{m\times N}}
\def\spt{{\rm supp}}
\def\wto{\rightharpoonup}
\def\dto{{\scriptstyle\buildrel{\scriptstyle\longrightarrow}
\over{{}_{\scriptstyle\longrightarrow}}}}
\def\eps{\varepsilon}
\def\cupp{\mathop{\cup}}
\def\ell{l}
\def\mint{{{\bf-}\!\!\!\!\!\!\hspace{-.1em}\int}}
\def\limsup{\mathop{\overline{\lim}}}
\def\liminf{\mathop{\underline{\lim}}}
\def\endproof{$\blacksquare$}
\newtheorem{theorem}{Theorem}[section]
\newtheorem{lemma}[theorem]{Lemma}
\newtheorem{proposition}[theorem]{Proposition}
\newtheorem{corollary}[theorem]{Corollary}
\theoremstyle{definition}
\newtheorem{definition}[theorem]{Definition}
\theoremstyle{remark}
\newtheorem{remark}[theorem]{Remark}
\title[Relaxation of variational integrals in metric Sobolev spaces]{On the relaxation of variational integrals in metric Sobolev spaces}
\author{\sc Omar Anza Hafsa}
\address{UNIVERSITE DE NIMES, Laboratoire MIPA, Site des Carmes, Place Gabriel P\'eri, 30021 N\^\i mes, France. \newline LMGC, UMR-CNRS 5508, Place Eug\`ene Bataillon, 34095 Montpellier, France.}
\email{omar.anza-hafsa@unimes.fr}
\author{Jean-Philippe Mandallena}
\address{UNIVERSITE DE NIMES, Laboratoire MIPA, Site des Carmes, Place Gabriel P\'eri, 30021 N\^\i mes, France. \newline LMGC, UMR-CNRS 5508, Place Eug\`ene Bataillon, 34095 Montpellier, France.}
\email{jean-philippe.mandallena@unimes.fr}
\keywords{Relaxation, variational integral, Sobolev spaces with respect to a metric measure space, integral representation, quasiconvexification with respect to a measure, differentiable structure for metric measure spaces}
\begin{document}

\begin{abstract}
We give an extension of the theory of relaxation of variational integrals in classical Sobolev spaces to the setting of metric Sobolev spaces. More precisely, we establish a general framework to deal with the problem of finding an integral representation for  ``relaxed" variational functionals of variational integrals of the calculus of variations in the setting of metric measure spaces. We prove integral representation theorems, both in the convex and non-convex case, which extend and complete previous results in the setting of euclidean measure spaces to the  setting of metric measure spaces. We also show that these integral representation theorems can be applied in the setting of Cheeger-Keith's differentiable structure.

\end{abstract}

\maketitle


\section{Introduction}

Let $(X,d,\mu)$ be a metric measure space, where $(X,d)$ is a separable and compact metric space and $\mu$ is a positive Radon measure on $X$. Let $p\in]1,\infty[$ be a real number and let $\{L_x\}$ be a field of Carath\'eodory integrands over $X$ (see the begining of \S 2.2 for more details) assumed to be both $p$-coercive, see \eqref{p-coercivity}, and of $p$-polynomial growth, see \eqref{p-polynomial-growth}. Let $m\geq 1$ be an integer and let $\mathcal{O}(X)$ be the class of all open subsets of $X$. In this paper, we are concerned with the problem of finding an integral representation for the ``relaxed" variational functional $\overline{E}:W^{1,p}_\mu(X;\RR^m)\times\mathcal{O}(X)\to[0,\infty]$ given by
$$
\overline{E}(u;A):=\inf\left\{\liminf_{n\to\infty}\int_A L_x(\nabla u_n(x))d\mu(x):\mathcal{A}(X;\RR^m)\ni u_n\to u\hbox{ in }L^p_\mu(X;\RR^m)\right\},
$$
where $\mathcal{A}(X;\RR^m):=[\mathcal{A}(X)]^m$, with $\mathcal{A}(X)$ a subalgebra of the algebra of all continuous functions from $X$ to $\RR$, which contains the constants and enough cut-off functions (see the begining of \S 2.1 for more details), and the operator $\nabla$, from $\mathcal{A}(X;\RR^m)$ to $L^\infty_\mu(X;\RR^N)$ with $N\geq 1$ an integer, is a gradient over $\mathcal{A}(X;\RR^m)$, see \eqref{Gradient-Property}. For example, $\mathcal{A}(X)$ can be the algebra of all restrictions to the closure of a bounded open subset of $\RR^N$ of $C^1$-functions from $\RR^N$ to $\RR$ or, more generally, the algebra of all Lipschitz functions from $X$ to $\RR$ (see Remark \ref{ReMaRk-ExaMpLEs}). The $(\mu,p)$-Sobolev space $W^{1,p}_\mu(X;\RR^m)$ with respect to the metric measure space $(X,d,\mu)$ is defined as the completion of $\mathcal{A}(X;\RR^m)$ with respect to the norm $\|u\|_{L^p_\mu(X;\RR^m)}+\|\nabla_\mu u\|_{L^p_\mu(X;\MM)}$, where $\MM$ is the space of all $m\times N$ matrices and $\nabla_\mu$, called the $\mu$-gradient, is obtained from $\nabla$ by projection over a suitable ``normal space" to $\mu$ (see \S 2.1 for more details). 

The present paper is a first attempt to establish a general framework to deal with the problem of representing $\overline{E}$ in the setting of metric measure spaces having in mind applications to hyperelasticity. In fact, the interest of considering a general measure is that its support can modeled a hyperelastic structure together with its singularities like for example thin dimensions, corners, junctions, etc.   Such mechanical singular objects naturally lead to develop calculus of variations with metric Sobolev spaces. 

In this paper, we find under which conditions the ``relaxed" variational functional $\overline{E}$ has an integral representation of the form
\begin{equation}\label{RePREsenTAtioN-ProBLEm}
\overline{E}(u;A)=\int_A \overline{L}_x(\nabla_\mu u(x))d\mu(x)
\end{equation}
for all $u\in W^{1,p}_\mu(X;\RR^m)$ and all $A\in\mathcal{O}(X)$ with $\overline{L}_x:T^m_\mu(x)\to[0,\infty]$, where $T^m_\mu(x)$ is the $m$-tangent space to $\mu$ at $x$, i.e., $\MM=T_\mu^m(x)\oplus^\perp N_\mu^m(x)$ with $N_\mu^m(x)$ being the $m$-normal space to $\mu$ at $x$ mentioned above.  We also find a representation formula for $\overline{L}_x$. 

In the setting of euclidean measure spaces, i.e., when $X$ is the closure of a bounded open subset of $\RR^N$, such representation problems was studied, in the one hand, in the convex case in \cite{boubusep97,oah-jpm03,chiadopiat-zhikov03,oah-jpm04}, and, on the other hand, in the non-convex case in \cite{jpm00,jpm05} when $\mu$ is a ``superficial" measure restricted to a smooth manifold. Note also that the study of the lower semicontinuity of variational integrals of type \eqref{RePREsenTAtioN-ProBLEm} was treated in \cite{fragala03} (see also \cite{mocanu05}). In the present paper we prove the following two main integral representation results which extend and complete these previous works to the setting of metric measure spaces both in the convex and non-convex case. 

Firstly, in the convex case, i.e., when  the functions $\widehat{L}_x:T_\mu^m(x)\to[0,\infty]$ given by
$$
\widehat{L}_x(\xi):=\inf_{\zeta\in N_\mu^m(x)}L_x(\xi+\zeta)
$$
are convex, we prove that \eqref{RePREsenTAtioN-ProBLEm} holds with $\overline{L}_x=\widehat{L}_x$ (see Theorem \ref{Main-Theorem-Convex}). Secondly, in the non-convex case, i.e., when the functions $\widehat{L}_x$ are not necessarily convex, we prove, under suitable conditions on the metric measure space $(X,d,\mu)$ and the $(\mu,p)$-Sobolev space $W^{1,p}_\mu(X;\RR^m)$, see {\rm (C$_0$)}, {\rm (C$_1$)}, {\rm (C$_2$)}, {\rm (A$_1$)}, {\rm (A$_2$)} and {\rm (A$_3$)} in \S 2.2.2, that  \eqref{RePREsenTAtioN-ProBLEm} holds with $\overline{L}_x=\mathcal{Q}_\mu L_x$, where $\mathcal{Q}_\mu L_x:T^m_\mu(x)\to[0,\infty]$ is given by
$$
\mathcal{Q}_\mu L_x(\xi):=\lim_{\rho\to 0}\inf\left\{\mint_{Q_\rho(x)}\widehat{L}_y(\xi+\nabla_\mu w(y))d\mu(y):w\in W^{1,p}_{\mu,0}(Q_\rho(x);\RR^m)\right\},
$$
where $W^{1,p}_{\mu,0}(Q_\rho(x);\RR^m)$ is the closure of $\mathcal{A}_0(Q_\rho(x);\RR^m)$ with respect to the $W^{1,p}_\mu$-norm with $\mathcal{A}_0(Q_\rho(x);\RR^m):=\{u\in\mathcal{A}(X;\RR^m):u=0\hbox{ on }X\setminus Q_\rho(x)\}$ and  $Q_\rho(x)$ is the open ball centered at $x\in X$ with radius $\rho>0$  (see Theorem \ref{FiNaL-Main-TheOrEM}). According to the classical theory of relaxation, we can say that this formula plays the role of the classical Dacorogna's quasiconvexification formula in the euclidean Lebesgue setting (see \cite{dacorogna08} for more details). It is then natural to call $\{Q_\mu L_x\}$ the $\mu$-quasiconvexification (or the quasiconvexification with respect to $\mu$) of $\{L_x\}$. 

\medskip

The plan of the paper is as follows. In \S 2.1, Sobolev spaces with respect to a metric measure space are introduced by using the notion of ``normal and tangent space" to a measure as developped in \cite[\S 2]{boubusep97}, \cite[\S 7]{oah-jpm03} and \cite[\S 2]{jpm05} (see also \cite{zhikov96,zhikov00}) in the setting of euclidean measure spaces. In \S 2.2, we state the main results of the paper, i.e., Theorem \ref{Main-Theorem-Convex} in \S 2.2.1 for the convex case and Theorems \ref{N-C-IRT}, \ref{IR-Theorem} and \ref{FiNaL-Main-TheOrEM} in \S 2.2.2 for the non-convex case. These theorems can be applied in the setting of euclidean measure spaces mentioned above, but also in that of (non-euclidean) metric measure spaces endowed with Cheeger-Keith's differentiable structure (see \S 2.3, Corollaries \ref{coro-Cheeger-1} and \ref{coro-Cheeger-2}) whose examples are Carnot groups, glued spaces, Laakso spaces, Bourdon-Pajot spaces and Gromov-Hausdorff limit spaces (see \cite{cheeger99, hajlaszkoskela00, gromov07} and the references therein). In Section 3, we recall two results, i.e., an interchange theorem of infimum and integral, see Theorem \ref{Interchange-Theorem}, and De Giorgi-Letta's lemma, see Lemma \ref{DeGiorgi-Letta-Lemma}, that we use in Section 4 to prove the main results of the paper. The interchange theorem is the principal ingredient in the proof of Proposition \ref{MR1} in \S 4.1, which is used, in the one hand,  to prove Theorem \ref{Main-Theorem-Convex} in \S 4.2, and, in the other hand, to establish, together with De Giorgi's slicing method, a more useful ``relaxed" formula for the variational functional $\overline{E}$, see Lemma \ref{AddiTiONaL-LEMma} (see also Lemma \ref{LeMMa-Main-Convex-Theorem}). De Giorgi-Letta's lemma combined with De Giorgi's slicing method are the essential tools in the proof of Theorem \ref{N-C-IRT} in \S 4.3. Theorem \ref{FiNaL-Main-TheOrEM} is established in \S 4.5 by using again De Giorgi's slicing method together with Theorem \ref{IR-Theorem} whose proof, given in \S4.4, is adapted from \cite[Lemmas 3.3 and 3.5]{boufonmas98} and uses Lemma \ref{AddiTiONaL-LEMma} and Theorem \ref{N-C-IRT}.

\medskip

\subsubsection*{Some basic notation} The open and closed balls centered at $x\in X$ with radius $\rho>0$ are denoted by:
\begin{itemize}
\item $Q_\rho(x):=\Big\{y\in X:d(x,y)<\rho\Big\};$
\item $\overline{Q}_\rho(x):=\Big\{y\in X:d(x,y)\leq\rho\Big\}.$
\end{itemize}
For $x\in X$ and $\rho>0$ we set 
$$
\partial Q_\rho(x):=\overline{Q}_\rho(x)\setminus Q_\rho(x)=\Big\{y\in X:d(x,y)=\rho\Big\}.
$$
For $A\subset X$ and $\eps>0$ we set:
\begin{itemize}
\item $A^-(\eps):=\Big\{x\in X:{\rm dist}(x,A)\leq\eps\Big\};$
\item $A^+(\eps):=\Big\{x\in X:{\rm dist}(x,A)\geq\eps\Big\},$
\end{itemize}
where ${\rm dist}(x,A):=\inf\limits_{a\in A}d(x,a)$. The symbol $\displaystyle \mint$ stands for the mean-value integral 
$$
\mint_Bf d\mu={1\over\mu(B)}\int_Bf d\mu.
$$
 
\section{Main results} 

\subsection{Sobolev spaces with respect to a metric measure space} Let $(X,d)$ be a separable and compact metric space and let $\mu$ be a positive Radon measure on $X$. Let $C(X)$ be the algebra of all continuous functions from $X$ to $\RR$ and let $\mathcal{A}(X)$ be a subalgebra of $C(X)$ such that $1\in\mathcal{A}(X)$. We assume that $\mathcal{A}(X)$ satisfies the Uryshon property, i.e., {\em for every $K\subset V\subset X$ with $K$ compact and $V$ open, there exists $\varphi\in\mathcal{A}(X)$ such that $\varphi(x)\in[0,1]$ for all $x\in X$, $\varphi(x)=0$ for all $x\in X\setminus V$ and $\varphi(x)=1$ for all $x\in K$.} Such a function $\varphi\in\mathcal{A}(X)$ is called a Uryshon function for the pair $(X\setminus V,K)$.

\begin{remark}\label{ReMaRk-ExaMpLEs}
For $(X,d)\equiv(\overline{\Omega},|\cdot-\cdot|)$ where $\Omega$ is a bounded open subset of $\RR^N$ and $|\cdot|$ is the norm in $\RR^N$, the set $C^1(\overline{\Omega})$ (of all  restrictions to $\overline{\Omega}$ of $C^1$-functions from $\RR^N$ to $\RR$ with compact support) is a subalgebra of $C(\overline{\Omega})$ which contains $1$ and satisfies the Uryshon property. More generally, the set ${\rm Lip}(X)$ of all Lipschitz functions from $X$ to $\RR$ is a subalgebra of $C(X)$ containing $1$ and verifying the Uryshon property.
\end{remark}

Denote the class of all subsets $K$ of $X$ such that either $K=A^i(\eps)$, with $A$ an open subset of $X$, $\eps>0$ and $i\in\{-,+\}$, or $K=\overline{Q}_\rho(x)$, with $x\in X$, $\rho>0$ and $\mu(\partial Q_\rho(x))=0$, by $\mathcal{K}(X)$. Let $N\geq 1$ be an integer and let $D:\mathcal{A}(X)\to L_\mu^\infty(X;\RR^N)$ be a linear operator such that:
\begin{eqnarray}
&&D(fg)=fDg+gDf\hbox{ for all }f,g\in\mathcal{A}(X);\label{Der-Prop}\\
&&\hbox{for every }f\in\mathcal{A}(X),\hbox{ every }K\in \mathcal{K}(X)\hbox{ and every }c\in\RR,\label{Der-Prop-1}\\ &&\hbox{if }f(x)=c\hbox{ for all }x\in K\hbox{ then }Df(x)=0\hbox{ for }\mu\hbox{-a.a. } x\in K.\nonumber
\end{eqnarray}
Let $m\geq 1$ be an integer, let $\MM$ be the space of all real $m\times N$ matrices, let $\mathcal{A}(X;\RR^m):=[\mathcal{A}(X)]^m$ and let $\nabla:\mathcal{A}(X;\RR^m)\to L^\infty_\mu(X;\MM)$ be the linear operator given by
\begin{equation}\label{vectorial-D}
\nabla u:=\left(
\begin{array}{c}
Du_1\\
\vdots\\ 
Du_m
\end{array}
\right)
\hbox{ with }u=(u_1,\cdots,u_m).
\end{equation}
Taking \eqref{Der-Prop}  into account it is easy to see that 
\begin{eqnarray}
&&\nabla (f u)=f\nabla u+D f\otimes u\hbox{ for all }u\in\mathcal{A}(X;\RR^m)\hbox{ and all }f\in\mathcal{A}(X).\label{Gradient-Property}
\end{eqnarray}
(Note that $\nabla\equiv D$ when $m=1$.) For each $u\in\mathcal{A}(X;\RR^m)$, set 
\begin{equation}\label{Def-of-Amu}
\mathcal{A}_u^m:=\Big\{v\in\mathcal{A}(X;\RR^m):v(x)=u(x)\hbox{ for all }x\in\spt(\mu) \Big\},
\end{equation}
where $\spt(\mu)$ denotes the support of the measure $\mu$, i.e., $\spt(\mu)$ is the smallest closed set $F\subset X$ such that $\mu(X\setminus\ F)=0$, and consider $\mathcal{H}_u^m$ defined by
$$
\mathcal{H}_u^m:=\Big\{w\in L_\mu^\infty(X;\MM):w(x)=\nabla v(x)\hbox{ for $\mu$-a.a. }x\in X\hbox{ with }v\in\mathcal{A}_u^m\Big\}.
$$
(Note that $\mathcal{A}^m_u\equiv[\mathcal{A}^1_u]^m$ and  $\mathcal{H}^m_u\equiv[\mathcal{H}^1_u]^m$.)
Noticing that $\mathcal{H}_0^m$ (which corresponds to $\mathcal{H}_u^m$ with $u=0$) is a linear subspace of $L_\mu^\infty(X;\MM)$, for $\mu$-a.e. $x\in X$, we introduce $N_\mu^m(x)\subset \MM$ given by
$$
N_\mu^m(x):=\Big\{w(x):w\in\mathcal{H}_0^m\Big\}.
$$

\begin{remark}
In fact, $N_\mu^m(x)=\{\nabla v(x):v\in\mathcal{A}_0^m\}$ for $\mu$-a.a. $x\in A$, where $\mathcal{A}_0^m$ corresponds to $\mathcal{A}_u^m$ with $u=0$. The sets $\mathcal{H}_u^m$ will be useful in the proof of Proposition \ref{MR1} in \S 4.1.
\end{remark}

Then, for $\mu$-a.e. $x\in X$, $N_\mu^m(x)$ is a linear subspace of $\MM$ that we call the $m$-normal space to $\mu$ at $x$. For $\mu$-a.e. $x\in X$, the linear subspace $T_\mu^m(x)$ of $\MM$ given by $\MM=T_\mu^m(x)\oplus^\perp N_\mu^m(x)$ is called the $m$-tangent space to $\mu$ at $x$ and the orthogonal projection on $T^m_\mu(x)$ is denoted by $P_\mu^m(x):\MM\to T_\mu^m(x)$. (Note that $N^m_\mu(x)\equiv[N^1_\mu(x)]^m$ and $T^m_\mu(x)\equiv[T^1_\mu(x)]^m$.) 

Taking \eqref{Gradient-Property} and \eqref{Der-Prop-1} into account we see that the linear operator $\nabla_\mu:\mathcal{A}(X;\RR^m)\to L^\infty_\mu(X;\MM)$ defined, for $\mu$-a.e. $x\in X$, by 
\begin{equation}\label{mu-vectorial-D}
\nabla_\mu u(x):=P_\mu^m(x)(\nabla u(x))=\left(
\begin{array}{c}
P_\mu^1(x)(Du_1(x))\\
\vdots\\ 
P_\mu^1(x)(Du_m(x))
\end{array}
\right)
\hbox{with } u=(u_1,\cdots,u_m)
\end{equation}
satisfies the following properties:
\begin{eqnarray}
&&\nabla_\mu (f u)=f\nabla_\mu u+D_\mu f\otimes u\hbox{ for all }u\in\mathcal{A}(X;\RR^m)\hbox{ and all }f\in\mathcal{A}(X);\label{MU-Gradient-Property}\\
&&\hbox{for every }f\in\mathcal{A}(X),\hbox{ every }K\in\mathcal{K}(X)\hbox{ and every }c\in\RR,\label{MU-Der-Prop-1}\\ 
&&\hbox{if }f(x)=c\hbox{ for all }x\in K\hbox{ then }D_\mu f(x)=0\hbox{ for }\mu\hbox{-a.a.  }x\in K,\nonumber
\end{eqnarray}
where $D_\mu f$ corresponds to $\nabla_\mu f$ with $m=1$.  Moreover, we have
\begin{lemma}\label{comp-EquA-mu-aa}
The linear operator $\nabla_\mu$ is compatible with the equality $\mu$-a.e., i.e.,
\begin{equation}\label{Lemma-Compatible-with-mu-a.e.}
  \hbox{if }u\in \mathcal{A}(X;\RR^m)\hbox{ and if } v\in\mathcal{A}_u^m\hbox{ then } \nabla_\mu u(x)=\nabla_\mu v(x) \hbox { for $\mu$-a.a. $x\in X$.}
\end{equation}
\end{lemma}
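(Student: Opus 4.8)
The plan is to unwind the definitions and to use that, by construction, the $m$-normal space $N_\mu^m(x)$ collects exactly the values at $x$ of gradients $\nabla w$ of functions $w\in\mathcal{A}(X;\RR^m)$ vanishing on $\spt(\mu)$. Concretely, given $u\in\mathcal{A}(X;\RR^m)$ and $v\in\mathcal{A}_u^m$, I would set $w:=v-u$. Since $\mathcal{A}(X)$ is a subalgebra of $C(X)$, hence a linear space, $w\in\mathcal{A}(X;\RR^m)$, and by \eqref{Def-of-Amu} we have $w(x)=v(x)-u(x)=0$ for all $x\in\spt(\mu)$; that is, $w\in\mathcal{A}_0^m$.

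First I would record that $\nabla_\mu$ is linear: both $\nabla$ (by \eqref{vectorial-D}) and, for $\mu$-a.e.\ $x$, the orthogonal projection $P_\mu^m(x)$ are linear, so from \eqref{mu-vectorial-D} one gets $\nabla_\mu v(x)-\nabla_\mu u(x)=\nabla_\mu w(x)$ for $\mu$-a.a.\ $x\in X$; it therefore suffices to prove $\nabla_\mu w=0$ $\mu$-a.e. Next, since $w\in\mathcal{A}_0^m$, the ($\mu$-a.e.\ defined) map $x\mapsto\nabla w(x)$ is an element of $\mathcal{H}_0^m$, whence, by the definition $N_\mu^m(x)=\{w'(x):w'\in\mathcal{H}_0^m\}$, we obtain $\nabla w(x)\in N_\mu^m(x)$ for $\mu$-a.a.\ $x\in X$ (intersecting the finitely many relevant full-measure sets). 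Finally, for each such $x$, by \eqref{mu-vectorial-D} we have $\nabla_\mu w(x)=P_\mu^m(x)(\nabla w(x))=0$, because $P_\mu^m(x)$ is the orthogonal projection of $\MM$ onto $T_\mu^m(x)$ and the splitting $\MM=T_\mu^m(x)\oplus^\perp N_\mu^m(x)$ forces $P_\mu^m(x)$ to annihilate $N_\mu^m(x)$. Combining this with the linearity step gives $\nabla_\mu u(x)=\nabla_\mu v(x)$ for $\mu$-a.a.\ $x\in X$, i.e.\ \eqref{Lemma-Compatible-with-mu-a.e.}.

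I do not expect a serious obstacle: the statement is essentially a consistency check built into the very definition of the $\mu$-gradient. The only points needing minimal care are that $\mathcal{A}(X;\RR^m)$ is closed under differences (so that $w$ is an admissible test function), that the several ``$\mu$-a.e.'' qualifiers can be amalgamated into a single full-measure set, and — already part of the standing setup — the measurability of $x\mapsto P_\mu^m(x)$, which is what makes $\nabla_\mu w\in L^\infty_\mu(X;\MM)$ well defined in the first place.
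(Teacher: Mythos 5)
Your proposal is correct and follows essentially the same route as the paper's own proof: write the difference $u-v$ (equivalently your $w=v-u$), observe it lies in $\mathcal{A}_0^m$ so its gradient takes values in $N_\mu^m(x)$ $\mu$-a.e., and conclude by linearity of $\nabla$ and the fact that $P_\mu^m(x)$ annihilates $N_\mu^m(x)$. No issues.
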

\begin{proof}
If $u\in \mathcal{A}(X;\RR^m)$ and if $v\in\mathcal{A}^m_u$ then, for $\mu$-a.e. $x\in X$, $\nabla (u-v)(x)\in N_\mu^m(x)$ and so $P^m_\mu(x)(\nabla (u-v)(x))=0$. Noticing that $\nabla u=\nabla v+\nabla (u-v)$ it follows that $P_\mu^m(x)(\nabla u(x))=P_\mu^m(x)(\nabla v(x))$ for $\mu$-a.a. $x\in X$, i.e., $\nabla_\mu u(x)=\nabla_\mu v(x)$ for $\mu$-a.a. $x\in X$. 
\end{proof}

\medskip

Let $1\leq p\leq\infty$ be a real number. The $(\mu,p)$-Sobolev space $W^{1,p}_\mu(X;\RR^m)$ with respect to the metric measure space $X=(X,d,\mu)$ is defined as the completion of $\mathcal{A}(X;\RR^m)$ with respect to the norm
\begin{equation}\label{W1pmu-norm}
\|u\|_{W^{1,p}_\mu(X;\RR^m)}:=\|u\|_{L^p_\mu(X;\RR^m)}+\|\nabla_\mu u\|_{L_\mu^p(X;\MM)}.
\end{equation}
Since $\|\nabla_\mu u\|_{L_\mu^p(X;\MM)}\leq \|u\|_{W^{1,p}_\mu(X;\RR^m)}$ for all $u\in\mathcal{A}(X;\RR^m)$ the linear map $\nabla_\mu$ from $\mathcal{A}(X;\RR^m)$ to $L_\mu^p(X;\MM)$  has a unique extension to $W^{1,p}_\mu(X;\RR^m)$ which will still be denoted by $\nabla_\mu$ and will be called the $\mu$-gradient.

\begin{remark}
When $X$ is the closure of a bounded open subset $\Omega$ of $\RR^N$ and $\mu$ is the Lebesgue measure on $\overline{\Omega}$, we retreive the (classical) Sobolev spaces $W^{1,p}(\Omega;\RR^m)$. If $X$  is a compact manifold $M$ and if $\mu$ is the superficial measure on $M$, we obtain the (classical) Sobolev spaces $W^{1,p}(M;\RR^m)$ on the compact manifold $M$. For more details on the various possible extensions of the classical theory of the Sobolev spaces to the setting of metric measure spaces, we refer to \cite[\S 10-14]{heinonen07} (see also \cite{cheeger99, gol-tro01,hajlasz02}).
\end{remark}

\begin{remark}\label{Remark-MU-Gradient-Property-BiS}
As $\mathcal{A}(X)$ is an algebra we have $fu\in\mathcal{A}(X;\RR^m)$ for all $f\in \mathcal{A}(X)$ and all $\mathcal{A}(X;\RR^m)$, and so $fu\in W^{1,p}_\mu(X;\RR^m)$ for all $f\in \mathcal{A}(X)$ and all $W^{1,p}_\mu(X;\RR^m)$ because $\mathcal{A}(X)$ is a subclass of the algebra of all continuous functions from $X$ to $\RR$ and $X$ is a compact metric space. On the other hand, from \eqref{MU-Gradient-Property} we see that
\begin{equation}\label{MU-Gradient-Property-BiS}
\nabla_\mu (f u)=f\nabla_\mu u+D_\mu f\otimes u\hbox{ for all }u\in W^{1,p}_\mu(X;\RR^m)\hbox{ and all }f\in\mathcal{A}(X).
\end{equation}
\end{remark}

\begin{remark}[generalization of Lemma \ref{comp-EquA-mu-aa}]\label{Remark-Generalization-of-Lemmacomp-EquA-mu-aa}
Given $A\in\mathcal{O}(X)$ set:
\begin{itemize}
\item $\mathcal{A}^m_0(A):=\Big\{v\in\mathcal{A}(X;\RR^m):v(x)=0\hbox{ for all } x\in\spt(\mu)\cap A\Big\}$;
\item $N^m_\mu(x,A):=\Big\{\nabla v(x):v\in\mathcal{A}^m_0(A)\Big\}$ for $\mu$-a.a. $x\in A$.
\end{itemize}
The following makes clear the link between $N_\mu^m(x)$ and $N_\mu^m(x,A)$.
\begin{lemma}\label{ReMaRK-GeneRalIzAtION-LeMMa}
$N_\mu^m(x,A)=N_\mu^m(x)$ for $\mu$-a.a. $x\in A$.
\end{lemma}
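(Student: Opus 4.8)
The plan is to prove the two inclusions $N_\mu^m(x,A)\subset N_\mu^m(x)$ and $N_\mu^m(x)\subset N_\mu^m(x,A)$ for $\mu$-a.a.\ $x\in A$. Since $\mathcal{A}^m_0(A)\subset\mathcal{A}^m_0$ (any $v$ vanishing on $\spt(\mu)\cap A$ need not vanish on all of $\spt(\mu)$, so in fact this inclusion fails directly), the correct first move is the easy inclusion in the other direction: given $v\in\mathcal{A}^m_0$, i.e.\ $v=0$ on $\spt(\mu)$, we certainly have $v=0$ on $\spt(\mu)\cap A$, hence $v\in\mathcal{A}^m_0(A)$; this gives $N_\mu^m(x)\subset N_\mu^m(x,A)$ for $\mu$-a.a.\ $x\in A$ immediately from the definitions. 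So the whole content is the reverse inclusion $N_\mu^m(x,A)\subset N_\mu^m(x)$ for $\mu$-a.a.\ $x\in A$.

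For the reverse inclusion, fix $v\in\mathcal{A}^m_0(A)$; I must show that for $\mu$-a.a.\ $x\in A$ the matrix $\nabla v(x)$ belongs to $N_\mu^m(x)=\{\nabla w(x):w\in\mathcal{A}^m_0\}$. The idea is a localization/cut-off argument: for a point $x_0\in A\cap\spt(\mu)$, choose $\rho>0$ small enough that $\overline{Q}_\rho(x_0)\subset A$ and, using the Uryshon property of $\mathcal{A}(X)$, pick $\varphi\in\mathcal{A}(X)$ with $\varphi\equiv 1$ on $\overline{Q}_{\rho/2}(x_0)$ and $\varphi\equiv 0$ on $X\setminus Q_\rho(x_0)$. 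Then set $w:=\varphi v\in\mathcal{A}(X;\RR^m)$. Since $v=0$ on $\spt(\mu)\cap A\supset\spt(\mu)\cap\overline{Q}_\rho(x_0)$ and $w=0$ off $Q_\rho(x_0)$, we get $w=0$ on $\spt(\mu)$, i.e.\ $w\in\mathcal{A}^m_0$. By the Leibniz-type rule \eqref{Gradient-Property}, $\nabla w=\varphi\nabla v+Df\otimes v$ with $f=\varphi$; on $\overline{Q}_{\rho/2}(x_0)$ we have $\varphi\equiv 1$, and by property \eqref{Der-Prop-1} (applied with $K=\overline{Q}_{\rho/2}(x_0)\in\mathcal{K}(X)$, provided $\mu(\partial Q_{\rho/2}(x_0))=0$, which holds for all but countably many radii) $D\varphi=0$ $\mu$-a.e.\ on $\overline{Q}_{\rho/2}(x_0)$, hence $\nabla w=\nabla v$ $\mu$-a.e.\ on $\overline{Q}_{\rho/2}(x_0)$. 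Therefore $\nabla v(x)=\nabla w(x)\in N_\mu^m(x)$ for $\mu$-a.a.\ $x\in\overline{Q}_{\rho/2}(x_0)$.

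To upgrade this from a neighborhood of one point to $\mu$-a.a.\ $x\in A$, I cover $A\cap\spt(\mu)$ by countably many such half-balls: by separability of $X$ one can extract from the family $\{Q_{\rho/2}(x_0):x_0\in A\cap\spt(\mu),\ \overline{Q}_\rho(x_0)\subset A,\ \mu(\partial Q_{\rho/2}(x_0))=0\}$ a countable subfamily still covering $A\cap\spt(\mu)$ (a Lindel\"of-type argument; the admissible radii are cofinal since only countably many are excluded). On each half-ball $\nabla v=\nabla w_i$ $\mu$-a.e.\ for some $w_i\in\mathcal{A}^m_0$, so $\nabla v(x)\in N_\mu^m(x)$ for $\mu$-a.a.\ $x$ in the union, hence for $\mu$-a.a.\ $x\in A$ (points of $A\setminus\spt(\mu)$ being $\mu$-negligible). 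Combining the two inclusions yields $N_\mu^m(x,A)=N_\mu^m(x)$ for $\mu$-a.a.\ $x\in A$.

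The main obstacle is the bookkeeping in the last step: making sure the cut-off functions $\varphi$ can be chosen in $\mathcal{A}(X)$ (this is exactly the Uryshon property), that the radii can be chosen so that $\overline{Q}_{\rho/2}(x_0)\in\mathcal{K}(X)$ so that \eqref{Der-Prop-1} applies, and that countably many half-balls suffice to cover $A\cap\spt(\mu)$ up to a $\mu$-null set. None of these is deep, but the interplay between "$v=0$ on $\spt(\mu)\cap A$" (a relative condition) and membership in $\mathcal{A}^m_0$ (an absolute condition) is the one subtlety that the cut-off trick is designed to resolve.
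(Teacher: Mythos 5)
Your proof is correct and follows essentially the same route as the paper's: the easy inclusion $N_\mu^m(x)\subset N_\mu^m(x,A)$ from $\mathcal{A}^m_0\subset\mathcal{A}^m_0(A)$, and for the converse a Urysohn cut-off $\varphi$ supported in $A$ applied to $v\in\mathcal{A}^m_0(A)$ so that $\varphi v\in\mathcal{A}^m_0$ with $\nabla(\varphi v)=\nabla v$ near the point, via the Leibniz rule \eqref{Gradient-Property}. The only cosmetic differences are that you kill the term $D\varphi\otimes v$ by invoking \eqref{Der-Prop-1} on the half-ball (hence your care about $\mu(\partial Q_{\rho/2}(x_0))=0$) where the paper simply uses $v(x)=0$ at the $\mu$-a.e.\ point $x\in\spt(\mu)\cap A$, and that you add the Lindel\"of covering step the paper leaves implicit.
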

\begin{proof}
As $\mathcal{H}^m_0\subset\mathcal{H}^m_0(A)$ we have $N_\mu(x)\subset N_\mu(x,A)$ for $\mu$-a.a. $x\in A$. On the other hand, let $\xi\in N_\mu^m(x,A)$. Then $\xi=\nabla v(x)$ with $v\in\mathcal{A}^m_0(A)$. As $A$ is open we have $\overline{Q}_\rho(x)\subset A$ for some $\rho>0$. As $\mathcal{A}(X)$ satisfies the Uryshon property, there exists a Uryshon function $\varphi\in\mathcal{A}(X)$ for the pair $(X\setminus A,\overline{Q}_\rho(x))$. Set $\overline{v}:=\varphi v$. Then $\overline{v}\in\mathcal{A}^m_0$ because $\varphi(y)=0$ for all $y\in X\setminus A$ and $v(y)=0$ for all $y\in\spt(\mu)\cap A$. On the other hand, using \eqref{Gradient-Property} we see that $\nabla \overline{v}=D\varphi\otimes v+\varphi\nabla v$, and so $\nabla \overline{v}(x)=\nabla v(x)$ since $\varphi(x)=1$ and $v (x)=0$. It follows that $\xi\in N^m_\mu(x)$.
\end{proof}

\medskip

The following lemma, which generalizes Lemma \ref{comp-EquA-mu-aa}, is a consequence of Lemma \ref{ReMaRK-GeneRalIzAtION-LeMMa}.

\begin{lemma}
If $v\in\mathcal{A}(X;\RR^m)$ is such that $v(x)=0$ for all $x\in\spt(\mu)\cap A$, then $\nabla_\mu v(x)=0$ for $\mu$-a.a. $x\in A$.
\end{lemma}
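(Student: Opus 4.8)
The plan is to reduce the statement to Lemma~\ref{ReMaRK-GeneRalIzAtION-LeMMa} by unwinding the definitions of $\mathcal{A}^m_0(A)$, $N^m_\mu(x,A)$ and the $\mu$-gradient. First I would observe that the hypothesis on $v$ is exactly the membership condition $v\in\mathcal{A}^m_0(A)$, since $\mathcal{A}^m_0(A)=\{w\in\mathcal{A}(X;\RR^m):w(x)=0\hbox{ for all }x\in\spt(\mu)\cap A\}$. Consequently, by the very definition $N^m_\mu(x,A)=\{\nabla w(x):w\in\mathcal{A}^m_0(A)\}$, we get $\nabla v(x)\in N^m_\mu(x,A)$ for $\mu$-a.a. $x\in A$.

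Next I would invoke Lemma~\ref{ReMaRK-GeneRalIzAtION-LeMMa}, which asserts $N^m_\mu(x,A)=N^m_\mu(x)$ for $\mu$-a.a. $x\in A$. Combining this with the previous step yields $\nabla v(x)\in N^m_\mu(x)$ for $\mu$-a.a. $x\in A$. Finally, recalling that $\nabla_\mu v(x)=P^m_\mu(x)(\nabla v(x))$ where $P^m_\mu(x)$ is the orthogonal projection of $\MM$ onto $T^m_\mu(x)$ along $N^m_\mu(x)$ (i.e. $\MM=T^m_\mu(x)\oplus^\perp N^m_\mu(x)$), an element of $N^m_\mu(x)$ is sent to $0$ by $P^m_\mu(x)$; hence $\nabla_\mu v(x)=0$ for $\mu$-a.a. $x\in A$, which is the claim.

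I do not anticipate a genuine obstacle here: the statement is a direct corollary, and the only point requiring a modicum of care is the bookkeeping of the ``$\mu$-a.a.'' exceptional sets — one must take the union of the null set outside which $\nabla v(x)\in N^m_\mu(x,A)$ holds and the null set from Lemma~\ref{ReMaRK-GeneRalIzAtION-LeMMa}, which is still $\mu$-negligible. Everything else is a tautological chase through the definitions introduced above.
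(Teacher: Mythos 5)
Your proof is correct and follows exactly the paper's argument: identify $v$ as an element of $\mathcal{A}^m_0(A)$, deduce $\nabla v(x)\in N^m_\mu(x,A)$ for $\mu$-a.a.\ $x\in A$, apply Lemma~\ref{ReMaRK-GeneRalIzAtION-LeMMa} to replace $N^m_\mu(x,A)$ by $N^m_\mu(x)$, and conclude since $P^m_\mu(x)$ annihilates $N^m_\mu(x)$. No gaps; the remark about taking the union of the two null sets is the only point of care and you handle it correctly.
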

\begin{proof}
If $v\in\mathcal{A}(X;\RR^m)$ is such that $v(x)=0$ for all $x\in\spt(\mu)\cap A$, then $v\in\mathcal{A}^m_0(A)$, and so, for $\mu$-a.e. $x\in A$, $\nabla v(x)\in N_\mu^m(x,A)$. But, by Lemma \ref{ReMaRK-GeneRalIzAtION-LeMMa}, $N_\mu^m(x,A)=N_\mu^m(x)$ for $\mu$-a.a. $x\in A$, which means that $\nabla v(x)\in N_\mu^m(x)$ for $\mu$-a.a. $x\in A$. It follows that $P_\mu^m(\nabla v(x))=0$ for $\mu$-a.a. $x\in A$, i.e., $\nabla_\mu v(x)=0$ for $\mu$-a.a. $x\in A$.
\end{proof}
\end{remark}

\subsection{Integral representation theorems} Let $p\in]1,\infty[$ be a real number and let $\{L_x\}$ be a field of Carath\'eodory integrands over $X$, i.e., to $\mu$-a.e. $x\in X$ there corresponds a continuous function $L_x:\MM\to[0,\infty]$ so that the function $x\mapsto L_x(\xi)$ is $\mu$-measurable for all $\xi\in\MM$. We assume that $\{L_x\}$ is $p$-coercive, i.e., there exists $C>0$ such that
\begin{equation}\label{p-coercivity}
L_x(\xi)\geq C|\xi|^p\hbox{ for all }\xi\in\MM\hbox{ and $\mu$-a.a. }x\in X,
\end{equation}
and of $p$-polynomial growth, i.e., there exists $c>0$ such that
\begin{equation}\label{p-polynomial-growth}
L_x(\xi)\leq c(1+|\xi|^p)\hbox{ for all }\xi\in\MM\hbox{ and $\mu$-a.a. }x\in X.
\end{equation}
Let $\mathcal{O}(X)$ be the class of all open subsets of $X$, let $E:\mathcal{A}(X;\RR^m)\times\mathcal{O}(X)\to[0,\infty]$ be the variational integral defined by
$$
E(u;A):=\int_A L_x(\nabla u(x))d\mu(x)
$$
and let $\overline{E}:W^{1,p}_\mu(X;\RR^m)\times\mathcal{O}(X)\to[0,\infty]$ be the ``relaxed" variational functional of the variational integral $E$ with respect to the strong convergence in $L^p_\mu(X;\RR^m)$, i.e.,  
$$
\overline{E}(u;A):=\inf\left\{\liminf_{n\to\infty}E(u_n;A):\mathcal{A}(X;\RR^m)\ni u_n\to u\hbox{ in }{L^p_\mu(X;\RR^m)}\right\}.
$$
Note that the variational integral $E$ is in general not ``local", i.e., $u(x)=v(x)$ for $\mu$-a.a. $x\in X$ does not imply $E(u;A)=E(v;A)$ for all $A\in\mathcal{O}(X)$. However, as it is stated in the following proposition, the variational functional $\overline{E}$ can be rewritten as the ``relaxed" variational functional of a ``local" variational integral depending on the $\mu$-gradient. Let $\widehat{E}:\mathcal{A}(X;\RR^m)\times\mathcal{O}(X)\to[0,\infty]$ be defined by
$$
\widehat{E}(u;A):=\int_A \widehat{L}_x(\nabla_\mu u(x))d\mu(x)
$$
where, for $\mu$-a.e. $x\in X$, $\widehat{L}_x:T_\mu^m(x)\to[0,\infty]$ is given by
$$
\widehat{L}_x(\xi):=\inf_{\zeta\in N_\mu^m(x)}L_x(\xi+\zeta).
$$

\begin{remark}\label{p-coercivity-p-polynomial-growth-remark}
It is easy to see that, in the one hand, if $\{L_x\}$ is $p$-coercive then also is $\{\widehat{L}_x\}$, i.e.,
$$
\widehat{L}_x(\xi)\geq C|\xi|^p\hbox{ for all }\xi\in T^m_\mu(x)\hbox{ and $\mu$-a.a. }x\in X
$$
with $C>0$ given by \eqref{p-coercivity}, and, on the other hand, if $\{L_x\}$ is of $p$-polynomial growth then also is $\{\widehat{L}_x\}$, i.e.,
$$
\widehat{L}_x(\xi)\leq c(1+|\xi|^p)\hbox{ for all }\xi\in T^m_\mu(x)\hbox{ and $\mu$-a.a. }x\in X
$$
with $c>0$ given by \eqref{p-polynomial-growth}.
\end{remark}

\begin{remark}\label{continuity-remaRk}
If $L_x$ is continuous for $\mu$-a.a. $x\in X$ and if \eqref{p-coercivity} holds, i.e., $L_x$ is $p$-coercive, then $\widehat{L}_x$ is continuous for $\mu$-a.a. $x\in X$. Indeed, let $\xi\in T^m_\mu(x)$ and let $\{\xi_i\}_i\subset T^m_\mu(x)$ be such that $|\xi_i-\xi|\to0$. As $L_x$ is continuous and, for every $\zeta\in N_\mu^m(x)$, $\widehat{L}_x(\xi_i)\leq L_x(\xi_i+\zeta)$ for all $i\geq 1$ we have $\limsup_{i\to\infty}\widehat{L}_x(\xi_i)\leq L_x(\xi+\zeta)$ for all $\zeta\in N_\mu^m(x)$, and so $\limsup_{i\to\infty}\widehat{L}_x(\xi_i)\leq \widehat{L}_x(\xi)$. On the other hand, there is no loss of generality in assuming that $\liminf_{i\to\infty}\widehat{L}_x(\xi_i)=\lim_{i\to\infty}\widehat{L}_x(\xi_i)<\infty$. Consider $\{\zeta_i\}_i\subset N^m_\mu(x)$ such that $\widehat{L}_x(\xi_i)\leq L_x(\xi_i+\zeta_i)<\widehat{L}_x(\xi_i)+{1\over i}$ for all $i\geq 1$. As \eqref{p-coercivity} holds we see that the sequence $\{\zeta_i\}_i$ is bounded, and so (up to a subsequence) we can assert that there exists $\zeta\in N^m_\mu(x)$ such that $|\zeta_i-\zeta|\to0$. From the continuity of $L_x$ we deduce that  $\liminf_{i\to\infty}\widehat{L}_x(\xi_{i})=L_x(\xi+\zeta)\geq \widehat{L}_x(\xi)$, and the result follows.
\end{remark}

\begin{proposition}\label{MR1}
If \eqref{p-polynomial-growth} holds then
$$
\overline{E}(u;A):=\inf\left\{\liminf_{n\to\infty}\widehat{E}(u_n;A):\mathcal{A}(X;\RR^m)\ni u_n\to u\hbox{ in }L^p_\mu(X;\RR^m)\right\}
$$
for all $u\in W^{1,p}_\mu(X;\RR^m)$ and all $A\in\mathcal{O}(X)$.
\end{proposition}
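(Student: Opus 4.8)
The plan is to identify $\overline{E}$ with the relaxation of the ``localized'' integral $\widehat{E}$ by establishing two inequalities; throughout, write $\overline{\widehat{E}}(u;A)$ for the right-hand side of the asserted identity. The inequality $\overline{\widehat{E}}\le\overline{E}$ is immediate: for every $v\in\mathcal{A}(X;\RR^m)$ and every $A\in\mathcal{O}(X)$, for $\mu$-a.e. $x$ the matrix $\nabla v(x)-\nabla_\mu v(x)$ is the orthogonal projection of $\nabla v(x)$ onto $N_\mu^m(x)$, hence lies in $N_\mu^m(x)$, so that $\widehat{L}_x(\nabla_\mu v(x))\le L_x\big(\nabla_\mu v(x)+(\nabla v(x)-\nabla_\mu v(x))\big)=L_x(\nabla v(x))$ and therefore $\widehat{E}(v;A)\le E(v;A)$. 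Consequently any sequence competing in the infimum defining $\overline{E}(u;A)$ competes in the one defining $\overline{\widehat{E}}(u;A)$ with no larger value of the $\liminf$.

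The real content is the reverse inequality $\overline{E}\le\overline{\widehat{E}}$, and here is where I would use \eqref{p-polynomial-growth} and the interchange theorem. Fix $u\in W^{1,p}_\mu(X;\RR^m)$, $A\in\mathcal{O}(X)$ and a sequence $\{u_n\}\subset\mathcal{A}(X;\RR^m)$ with $u_n\to u$ in $L^p_\mu(X;\RR^m)$; it suffices to construct $\{\tilde u_n\}\subset\mathcal{A}(X;\RR^m)$ with $\tilde u_n\to u$ in $L^p_\mu(X;\RR^m)$ and $\liminf_{n\to\infty}E(\tilde u_n;A)\le\liminf_{n\to\infty}\widehat{E}(u_n;A)$. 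Fix $n$. The integrand $(x,\zeta)\mapsto L_x(\nabla u_n(x)+\zeta)$ on $X\times\MM$ is Carath\'eodory, hence normal, and by \eqref{p-polynomial-growth} it is real-valued and every competitor below gives a finite integral (recall $\nabla u_n\in L^\infty_\mu(X;\MM)$ and $\mu(X)<\infty$). Moreover $\mathcal{H}_0^m$ is a module over $\mathcal{A}(X)$: if $v\in\mathcal{A}_0^m$ and $\varphi\in\mathcal{A}(X)$ then $\varphi v\in\mathcal{A}_0^m$ and, since $v=0$ $\mu$-a.e., \eqref{Gradient-Property} gives $\nabla(\varphi v)=\varphi\nabla v$ $\mu$-a.e. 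Since the pointwise range of $\mathcal{H}_0^m$ is, by definition, $N_\mu^m(x)$ — a linear, hence closed, subspace of $\MM$ — the interchange theorem, Theorem \ref{Interchange-Theorem}, applies to the family $\mathcal{H}_0^m$ and yields
\[
\inf_{w\in\mathcal{H}_0^m}\int_A L_x(\nabla u_n(x)+w(x))d\mu(x)=\int_A\inf_{\zeta\in N_\mu^m(x)}L_x(\nabla u_n(x)+\zeta)d\mu(x).
\]
Because $N_\mu^m(x)$ is a subspace containing $\nabla u_n(x)-\nabla_\mu u_n(x)$, the substitution $\zeta\mapsto\zeta-(\nabla u_n(x)-\nabla_\mu u_n(x))$ turns the inner infimum into $\inf_{\zeta\in N_\mu^m(x)}L_x(\nabla_\mu u_n(x)+\zeta)=\widehat{L}_x(\nabla_\mu u_n(x))$, so the right-hand side equals $\widehat{E}(u_n;A)$.

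To conclude I would, for each $n$, choose $w_n\in\mathcal{H}_0^m$ with $\int_A L_x(\nabla u_n(x)+w_n(x))d\mu(x)\le\widehat{E}(u_n;A)+\frac1n$ (possible since the infimum above equals $\widehat{E}(u_n;A)$, which is finite by \eqref{p-polynomial-growth}), pick $v_n\in\mathcal{A}_0^m$ with $\nabla v_n=w_n$ $\mu$-a.e., and set $\tilde u_n:=u_n+v_n\in\mathcal{A}(X;\RR^m)$. Since $v_n=0$ on $\spt(\mu)$ and $\mu(X\setminus\spt(\mu))=0$, we have $\tilde u_n=u_n$ $\mu$-a.e., whence $\|\tilde u_n-u\|_{L^p_\mu(X;\RR^m)}=\|u_n-u\|_{L^p_\mu(X;\RR^m)}\to0$, while $E(\tilde u_n;A)=\int_A L_x(\nabla u_n(x)+w_n(x))d\mu(x)\le\widehat{E}(u_n;A)+\frac1n$. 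Passing to $\liminf_{n\to\infty}$ gives $\overline{E}(u;A)\le\liminf_{n\to\infty}\widehat{E}(u_n;A)$ for every admissible sequence $\{u_n\}$, and taking the infimum over such sequences yields $\overline{E}(u;A)\le\overline{\widehat{E}}(u;A)$, completing the proof.

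The step I expect to require the most care is the displayed interchange identity, i.e.\ exchanging the infimum over the admissible field $\mathcal{H}_0^m$ with the integral so as to recover the pointwise infimum over the entire subspace $N_\mu^m(x)$; this is precisely the role of Theorem \ref{Interchange-Theorem}, whose hypotheses I would check through the module (decomposability-type) structure of $\mathcal{H}_0^m$ over $\mathcal{A}(X)$ — exploiting that $\mathcal{A}(X)$ contains the Uryshon cut-offs — together with the $\mu$-measurability of the field $x\mapsto N_\mu^m(x)$, \eqref{p-polynomial-growth} entering only to keep the relevant integrals finite. Everything else is routine bookkeeping.
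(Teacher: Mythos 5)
Your proposal is correct and follows essentially the same route as the paper: the decisive step is the interchange theorem applied to the normally decomposable family $\mathcal{H}^m_0$ (equivalently $\mathcal{H}^m_{u_n}=\nabla u_n+\mathcal{H}^m_0$), whose $\mu$-essential supremum is identified with $N^m_\mu(\cdot)$ via the countable representation and the closedness of this finite-dimensional subspace, which is exactly what the paper does in Steps 1--3 of its proof. The only difference is organizational: the paper first rewrites $\overline{E}$ as the relaxation of $\mathcal{E}(u;A)=\inf\{E(v;A):v\in\mathcal{A}^m_u\}$ and then represents $\mathcal{E}$ integrally, whereas you perturb each recovery sequence directly by near-minimizers $v_n\in\mathcal{A}^m_0$; the content is identical.
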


\begin{remark}
Taking \eqref{Lemma-Compatible-with-mu-a.e.} into account it is easy to see that the variational integral $\widehat{E}$ is ``local", i.e., if $u(x)=v(x)$ for $\mu$-a.a. $x\in X$ then $\widehat{E}(u;A)=\widehat{E}(v;A)$ for all $A\in\mathcal{O}(X)$. Thus, the variational functional $\widehat{\mathcal{E}}:W^{1,p}_\mu(X;\RR^m)\times\mathcal{O}(X)\to[0,\infty]$ given by
\begin{equation}\label{DeF-fUnCt-RemarK-Local}
\widehat{\mathcal{E}}(u;A):=\left\{
\begin{array}{ll}
\widehat{E}(u;A)&\hbox{if }u\in \mathcal{A}(X;\RR^m)\\
\infty&\hbox{otherwise}
\end{array}
\right.
\end{equation}
is well defined with respect to the equality $\mu$-a.e.. We can then rephrase Proposition \ref{MR1} as follows: {\em the variational functional $\overline{E}$ is the variational lower semicontinuous envelope of $\widehat{\mathcal{E}}$ with respect to the strong convergence in $L^p_\mu(X;\RR^m)$}.
\end{remark}

\begin{remark}\label{ReFLeXIVITY-of-Metric-SObOlev-Spaces}
The $(\mu,p)$-Sobolev space $W^{1,p}_\mu(X;\RR^m)$ is reflexive whenever $p\in]1,\infty[$. Indeed, the linear operator $\Theta:W^{1,p}_\mu(X;\RR^m)\to L^p_\mu(X;\RR^m)\times L^p_\mu(X;\MM)$ defined by $\Theta(u):=(u,\nabla_\mu u)$ is an isometry, hence $\Theta(W^{1,p}_\mu(X;\RR^m))$ is a closed linear subspace of $L^p_\mu(X;\RR^m)\times L^p_\mu(X;\MM)$. For $p>1$ the product space $L^p_\mu(X;\RR^m)\times L^p_\mu(X;\MM)$ is reflexive, and so is $\Theta(W^{1,p}_\mu(X;\RR^m))$.
\end{remark}

\subsubsection{The convex case} The following theorem gives, under \eqref{p-coercivity} and \eqref{p-polynomial-growth}, an integral representation of the ``relaxed" variational functional $\overline{E}$ in the reflexive and convex case. 

\begin{theorem}\label{Main-Theorem-Convex}
If \eqref{p-coercivity} and \eqref{p-polynomial-growth} hold and if $\widehat{L}_x$ is convex for $\mu$-a.a. $x\in X$, then
$$
\overline{E}(u;A)=\int_A \widehat{L}_x(\nabla_\mu u(x))d\mu(x)
$$
for all $u\in W^{1,p}_\mu(X;\RR^m)$ and all $A\in\mathcal{O}(X)$.
\end{theorem}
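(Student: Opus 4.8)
The plan is to establish the two inequalities separately, the starting point being Proposition \ref{MR1}: since \eqref{p-polynomial-growth} holds, $\overline E(u;A)$ equals the infimum of $\liminf_{n\to\infty}\widehat E(u_n;A)$ over all $\mathcal A(X;\RR^m)\ni u_n\to u$ in $L^p_\mu(X;\RR^m)$, so it is enough to work with the local functional $\widehat E$. Note first that, under the hypothesis, $\widehat L_x$ is convex and finite on $T^m_\mu(x)$ (because $\widehat L_x(\xi)\le L_x(\xi)\le c(1+|\xi|^p)$), hence continuous (cf.\ Remark \ref{continuity-remaRk}), so $\{\widehat L_x\}$ is a field of Carath\'eodory integrands obeying the coercivity and growth bounds recorded in Remark \ref{p-coercivity-p-polynomial-growth-remark}; it will also be convenient to extend $\widehat L_x$ to all of $\MM$ by $\eta\mapsto\widehat L_x(P_\mu^m(x)\eta)$, a nonnegative, convex, continuous Carath\'eodory integrand on $X\times\MM$ agreeing with $\widehat L_x$ along $\mu$-gradients.

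For the inequality $\overline E(u;A)\le\int_A\widehat L_x(\nabla_\mu u(x))\,d\mu(x)$, I would choose, by density of $\mathcal A(X;\RR^m)$ in $W^{1,p}_\mu(X;\RR^m)$, a sequence $u_n\in\mathcal A(X;\RR^m)$ with $u_n\to u$ in $W^{1,p}_\mu(X;\RR^m)$; then $\nabla_\mu u_n\to\nabla_\mu u$ in $L^p_\mu(X;\MM)$, hence along a subsequence $\mu$-a.a.\ with an $L^p_\mu$-dominating function, and the growth bound $0\le\widehat L_x(\nabla_\mu u_n(x))\le c(1+|\nabla_\mu u_n(x)|^p)$ together with continuity of $\widehat L_x$ lets the dominated convergence theorem give $\widehat E(u_n;A)\to\int_A\widehat L_x(\nabla_\mu u(x))\,d\mu(x)$ (finite by the growth bound). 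Proposition \ref{MR1} then yields the upper bound.

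For the reverse inequality it suffices to show that $\liminf_{n\to\infty}\widehat E(u_n;A)\ge\int_A\widehat L_x(\nabla_\mu u(x))\,d\mu(x)$ for every $\mathcal A(X;\RR^m)\ni u_n\to u$ in $L^p_\mu(X;\RR^m)$. We may assume the left-hand side is finite and, passing to a subsequence, that $\widehat E(u_n;A)$ converges; coercivity of $\{\widehat L_x\}$ then bounds $\{\nabla_\mu u_n\}$ in $L^p_\mu(A;\MM)$ and $\{u_n\}$ is bounded in $L^p_\mu(X;\RR^m)$. The difficulty is that this is only a \emph{local} gradient bound, whereas weak compactness in the reflexive space $W^{1,p}_\mu(X;\RR^m)$ (Remark \ref{ReFLeXIVITY-of-Metric-SObOlev-Spaces}) needs a \emph{global} one; I would bridge this by a cut-off. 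Fix a compact $K\subset A$ and, using the Uryshon property, pick $\varphi\in\mathcal A(X)$ with $0\le\varphi\le1$, $\varphi=1$ on $K$ and $\varphi=0$ on $X\setminus A$ (take $\varphi\equiv1$ if $A=X$); set $v_n:=\varphi u_n\in W^{1,p}_\mu(X;\RR^m)$. By \eqref{MU-Gradient-Property-BiS}, $\nabla_\mu v_n=\varphi\nabla_\mu u_n+D_\mu\varphi\otimes u_n$, and since $\varphi$ vanishes off $A$ with $|\varphi|\le1$, the first term is bounded in $L^p_\mu(X;\MM)$ by $\|\nabla_\mu u_n\|_{L^p_\mu(A;\MM)}$, the second by $\|D_\mu\varphi\|_{L^\infty_\mu}\|u_n\|_{L^p_\mu(X;\RR^m)}$, so $\{v_n\}$ is bounded in $W^{1,p}_\mu(X;\RR^m)$. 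Reflexivity yields a subsequence with $v_n\wto v$ in $W^{1,p}_\mu(X;\RR^m)$, and $v=\varphi u$ because $v_n\to\varphi u$ strongly in $L^p_\mu(X;\RR^m)$; hence $\nabla_\mu v_n\wto\nabla_\mu(\varphi u)$ in $L^p_\mu(X;\MM)$. Subtracting $D_\mu\varphi\otimes u_n\to D_\mu\varphi\otimes u$ (strong) and using \eqref{MU-Gradient-Property-BiS} again gives $\varphi\nabla_\mu u_n\wto\varphi\nabla_\mu u$ in $L^p_\mu(X;\MM)$, and testing only against functions supported in $K$, where $\varphi=1$, yields $\nabla_\mu u_n\wto\nabla_\mu u$ in $L^p_\mu(K;\MM)$. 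The classical lower semicontinuity theorem for convex Carath\'eodory integrands under weak $L^p$-convergence, applied to the extended integrand on $K$, then gives
$$
\liminf_{n\to\infty}\widehat E(u_n;A)\ \ge\ \liminf_{n\to\infty}\int_K\widehat L_x(\nabla_\mu u_n(x))\,d\mu(x)\ \ge\ \int_K\widehat L_x(\nabla_\mu u(x))\,d\mu(x),
$$
and taking the supremum over compact $K\subset A$ and invoking inner regularity of the finite Radon measure $A'\mapsto\int_{A'}\widehat L_x(\nabla_\mu u(x))\,d\mu(x)$ completes the argument.

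The main obstacle is precisely the mismatch just described between the local control of $\nabla_\mu u_n$ on $A$ and the global $L^p_\mu(X;\RR^m)$-convergence underlying $\overline E$; the cut-off $\varphi u_n$ disposes of it cheaply, the point being that $\mathcal A(X)$ is an algebra with the Uryshon property so that $\varphi u_n\in\mathcal A(X;\RR^m)$ and the $\mu$-product rule \eqref{MU-Gradient-Property-BiS} applies, and that $\varphi=1$ on $K$ transfers the weak limit back to $K$. In contrast to the non-convex case, no appeal to De Giorgi--Letta's lemma or to a slicing construction is needed here: plain inner regularity of the measure $\int_{\,\cdot\,}\widehat L_x(\nabla_\mu u)\,d\mu$ suffices to pass from the compact subsets $K$ to $A$.
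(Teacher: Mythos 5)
Your argument is correct, and the upper-bound half is essentially the paper's: density of $\mathcal{A}(X;\RR^m)$ in $W^{1,p}_\mu(X;\RR^m)$ plus continuity of $u\mapsto\int_A\widehat{L}_x(\nabla_\mu u)\,d\mu$ under strong $W^{1,p}_\mu$-convergence (the paper invokes Vitali's convergence theorem where you use a dominated subsequence), combined with Proposition \ref{MR1}. For the lower bound the routes genuinely differ. The paper works globally with $\mathcal{F}(u):=\int_A\widehat{L}_x(\nabla_\mu u)\,d\mu$ on $W^{1,p}_\mu(X;\RR^m)$: convexity plus strong continuity gives weak lower semicontinuity, coercivity plus reflexivity upgrades a finite-energy sequence with $u_n\to u$ in $L^p_\mu$ to $u_n\wto u$ in $W^{1,p}_\mu(X;\RR^m)$, whence $\overline{\mathcal{F}}^s=\overline{\mathcal{F}}^w=\mathcal{F}$. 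You instead localize: the cut-off $\varphi u_n$ converts the purely local bound $\sup_n\int_A|\nabla_\mu u_n|^p\,d\mu<\infty$ into a global $W^{1,p}_\mu$-bound, you extract $\nabla_\mu u_n\wto\nabla_\mu u$ in $L^p_\mu(K;\MM)$ on compacts $K\subset A$, apply the classical sequential weak lower semicontinuity of nonnegative convex Carath\'eodory integrands on $L^p$, and conclude by inner regularity. Your detour buys something real: for $A\neq X$ the coercivity of $\{\widehat{L}_x\}$ only controls $\|\nabla_\mu u_n\|_{L^p_\mu(A;\MM)}$, so the paper's assertion that $\sup_n\|u_n\|_{W^{1,p}_\mu(X;\RR^m)}<\infty$ is not literally justified as written, and your cut-off is precisely what repairs that step. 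The price is appealing to the heavier (though standard) Ioffe-type lower semicontinuity theorem on $L^p_\mu(K;\MM)$ instead of the elementary ``convex and strongly lower semicontinuous implies weakly lower semicontinuous''. Note finally that both arguments share the same tacit step of identifying the weak limit of the gradients with $\nabla_\mu u$ (your ``$v=\varphi u$'', the paper's ``$u_n\wto u$ in $W^{1,p}_\mu$''), which amounts to a weak closability of $\nabla_\mu$ along the sequences considered; you are no less rigorous than the source on this point.
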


\begin{remark}
If $L_x$ is convex for $\mu$-a.a. $x\in X$  then also is $\widehat{L}_x$ for $\mu$-a.a. $x\in X$. Indeed, let $\alpha\in]0,1[$ and let $\xi,\hat\xi\in T^m_\mu(x)$ and consider  $\{\zeta_i\}_i,\{\hat\zeta_i\}_i\subset N^m_\mu(x)$ such that $\widehat{L}_x(\xi)=\lim_{i\to\infty}L_x(\xi+\zeta_i)$ and $\widehat{L}_x(\hat\xi)=\lim_{i\to\infty}L_x(\hat\xi+\hat\zeta_i)$. Fix any $i\geq 1$. As $\alpha\zeta_i+(1-\alpha)\hat\zeta_i\in N^m_\mu(x)$ we have $\widehat{L}_x(\alpha\xi+(1-\alpha)\hat\xi)\leq L_x(\alpha\xi+(1-\alpha)\hat\xi+\alpha\zeta_i+(1-\alpha)\hat\zeta_i)=L_x(\alpha(\xi+\zeta_i)
+(1-\alpha)(\hat\xi+\hat\zeta_i))$. Hence $\widehat{L}_x(\alpha\xi+(1-\alpha)\hat\xi)\leq\alpha L_x(\xi+\zeta_i)+(1-\alpha)L_x(\hat\xi+\hat\zeta_i)$ for all $i\geq 1$ because $L_x$ is convex, and the result follows by letting $i\to\infty$.

However, the converse implication is not true. Indeed, if for $\mu$-a.e. $x\in X$, $L_x:\MM\to[0,\infty]$ is of the form
$$
L_x(\xi)=L_1\big(P^m_\mu(x)(\xi)\big)+L_2\big(\xi-P^m_\mu(x)(\xi)\big),
$$
with $L_1,L_2:\MM\to[0,\infty]$ such that $L_1$ is convex and $L_2$ is not convex, then both $L_x$ is not convex and $\widehat{L}_x:T^m_\mu(x)\to[0,\infty]$ is convex.
\end{remark}

\subsubsection{The non-convex case} In the non-convex case, i.e., when the functions $\widehat{L}_x$ are not necessarily convex, the following theorem asserts that under \eqref{p-coercivity} and \eqref{p-polynomial-growth} the variational functional $\overline{E}$ has always a ``general" integral representation.

\begin{theorem}\label{N-C-IRT}
If \eqref{p-coercivity} and \eqref{p-polynomial-growth} hold then  
$$
\overline{E}(u;A)=\int_A \lambda_u(x)d\mu(x)
$$
for all $u\in W^{1,p}_\mu(X;\RR^m)$ and all $A\in\mathcal{O}(X)$ with $\lambda_u\in L^1_\mu(X)$ given by 
$$
\lambda_u(x):=\lim_{\rho\to0}{\overline{E}(u;Q_\rho(x))\over\mu(Q_\rho(x))}.
$$ 
\end{theorem}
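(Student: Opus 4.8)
\emph{Strategy.} The plan is to fix $u\in W^{1,p}_\mu(X;\RR^m)$ and show that the set function $A\mapsto\overline{E}(u;A)$ is the restriction to $\mathcal{O}(X)$ of a finite Radon measure $\nu_u$ on $X$ with $\nu_u\ll\mu$; then $\nu_u=f_u\mu$ with $f_u\in L^1_\mu(X)$, and a differentiation theorem will identify $f_u$ with $\lambda_u$. By Proposition \ref{MR1} I would work throughout with $\widehat{E}$ and the local integrand $\widehat{L}_x$ in place of $E$; recall that $\{\widehat{L}_x\}$ is again $p$-coercive and of $p$-polynomial growth (Remark \ref{p-coercivity-p-polynomial-growth-remark}) and each $\widehat{L}_x$ is finite and continuous (Remark \ref{continuity-remaRk}), so $\widehat{E}$ is well defined.

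\emph{Easy facts.} Monotonicity of $\overline{E}(u;\cdot)$ is clear, and superadditivity on disjoint open sets is immediate from $\widehat{E}(v;A\cup B)=\widehat{E}(v;A)+\widehat{E}(v;B)$ together with the superadditivity of $\liminf$ (every admissible sequence for $\overline{E}(u;A\cup B)$ is admissible for both $\overline{E}(u;A)$ and $\overline{E}(u;B)$). Testing with a sequence $\mathcal{A}(X;\RR^m)\ni u_n\to u$ in the $W^{1,p}_\mu$-norm (so $\nabla_\mu u_n\to\nabla_\mu u$ in $L^p_\mu$) and using \eqref{p-polynomial-growth} gives the a priori bound $\overline{E}(u;A)\le c\int_A(1+|\nabla_\mu u|^p)\,d\mu$ for all $A\in\mathcal{O}(X)$, whence $\overline{E}(u;X)<\infty$. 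It then remains to verify subadditivity and inner regularity, after which De Giorgi--Letta's lemma (Lemma \ref{DeGiorgi-Letta-Lemma}) applies.

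\emph{The slicing estimate (main step).} I would first prove the weak subadditivity $\overline{E}(u;A'\cup B)\le\overline{E}(u;A)+\overline{E}(u;B)$ for all $A,B\in\mathcal{O}(X)$ and all $A'\Subset A$. Fix $\eps>0$ and near-optimal admissible sequences $u_n\to u$, $v_n\to u$ in $L^p_\mu$ with $\lim_n\widehat{E}(u_n;A)\le\overline{E}(u;A)+\eps$ and $\lim_n\widehat{E}(v_n;B)\le\overline{E}(u;B)+\eps$; by $p$-coercivity \eqref{p-coercivity}, $\int_A|\nabla_\mu u_n|^p\,d\mu$ and $\int_B|\nabla_\mu v_n|^p\,d\mu$ are bounded uniformly in $n$. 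Fix $k\ge1$, a chain $A'=A_0\Subset A_1\Subset\cdots\Subset A_k\Subset A$, and Uryshon functions $\varphi_i\in\mathcal{A}(X)$ with $0\le\varphi_i\le1$, $\varphi_i=1$ on $A_{i-1}$, $\varphi_i=0$ off $A_i$; being finitely many, $\|D_\mu\varphi_i\|_{L^\infty_\mu}\le M_k<\infty$. Put $R_i:=A_i\setminus\overline{A_{i-1}}$ (pairwise disjoint, contained in $A$) and $w_n^i:=\varphi_i u_n+(1-\varphi_i)v_n\in\mathcal{A}(X;\RR^m)$; then $w_n^i\to u$ in $L^p_\mu$, and by \eqref{MU-Gradient-Property} $\nabla_\mu w_n^i=\varphi_i\nabla_\mu u_n+(1-\varphi_i)\nabla_\mu v_n+D_\mu\varphi_i\otimes(u_n-v_n)$, which equals $\nabla_\mu u_n$ on $A_{i-1}$ and $\nabla_\mu v_n$ off $A_i$. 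Hence on $A'\cup B$ the only interface is $R_i\cap B$, and \eqref{p-polynomial-growth} gives $\widehat{E}(w_n^i;A'\cup B)\le\widehat{E}(u_n;A)+\widehat{E}(v_n;B)+c\int_{R_i\cap B}\big(1+C_p|\nabla_\mu u_n|^p+C_p|\nabla_\mu v_n|^p+C_pM_k^p|u_n-v_n|^p\big)\,d\mu$. Summing over $i=1,\dots,k$, the disjointness of the $R_i$ makes $\sum_i\int_{R_i\cap B}(1+C_p|\nabla_\mu u_n|^p+C_p|\nabla_\mu v_n|^p)\,d\mu$ bounded uniformly in $n$ and $k$, so there is $i=i(n)$ for which that part is at most $\frac1k\cdot(\mathrm{const})$, while the leftover term is $\le cC_pM_k^p\|u_n-v_n\|_{L^p_\mu}^p\to0$ as $n\to\infty$ with $k$ fixed. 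Letting $n\to\infty$, then $k\to\infty$, then $\eps\to0$ yields the weak subadditivity.

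\emph{Assembly and the obstacle.} Weak subadditivity together with the growth bound yields inner regularity $\overline{E}(u;C)=\sup\{\overline{E}(u;C'):C'\Subset C\}$ (write an open $C$ as the union of an inner approximation and a thin collar $C\setminus\overline{C'}$, apply weak subadditivity, and send the collar to $\mu$-null using the growth bound); weak subadditivity and inner regularity then upgrade to full subadditivity. De Giorgi--Letta's lemma now provides a Borel, hence (as $\overline{E}(u;X)<\infty$) Radon, measure $\nu_u$ on $X$ with $\nu_u(A)=\overline{E}(u;A)$ for all $A\in\mathcal{O}(X)$; the growth bound extends to all Borel sets by regularity, so $\nu_u\ll\mu$ and, by the Radon--Nikodym theorem, $\nu_u=f_u\mu$ with $0\le f_u\le c(1+|\nabla_\mu u|^p)\in L^1_\mu(X)$. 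Finally, by the Lebesgue--Besicovitch differentiation theorem for the Radon measure $\mu$ applied to $f_u$, for $\mu$-a.e. $x$ the limit $\lim_{\rho\to0}\overline{E}(u;Q_\rho(x))/\mu(Q_\rho(x))=\lim_{\rho\to0}\mu(Q_\rho(x))^{-1}\int_{Q_\rho(x)}f_u\,d\mu$ exists and equals $f_u(x)$, i.e. $\lambda_u=f_u$ $\mu$-a.e. and $\overline{E}(u;A)=\int_A\lambda_u\,d\mu$. The main obstacle is the slicing step: the global $L^p_\mu$-convergence of admissible sequences precludes naive localisation, so subadditivity must be recovered by interpolating a sequence near-optimal on $A$ with one near-optimal on $B$ and absorbing the interface term $\int_{R_i\cap B}|D_\mu\varphi_i|^p|u_n-v_n|^p\,d\mu$ — first capping the cut-off $\mu$-gradients by fixing the number $k$ of slices, then exploiting $\|u_n-v_n\|_{L^p_\mu}\to0$, and only afterwards letting $k\to\infty$. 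A secondary point, invisible in the Euclidean case, is the concluding differentiation step, which rests on the differentiation theory for the Radon measure $\mu$.
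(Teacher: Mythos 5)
Your proposal is correct and follows essentially the same route as the paper: De Giorgi's slicing method (gluing near-optimal sequences with a family of cut-offs and averaging over the disjoint annuli to control the interface term, using that $\|u_n-v_n\|_{L^p_\mu}\to 0$ before letting the number of slices tend to infinity) to obtain weak and then full subadditivity, followed by De Giorgi--Letta's lemma, Radon--Nikodym and Lebesgue differentiation. The point you flag about the differentiation step requiring a covering/differentiation theory for $\mu$ is also left implicit in the paper's own proof, so it is not a discrepancy with the intended argument.
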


To refine the ``general" integral representation given by Theorem \ref{N-C-IRT}, we need the following four conditions:
\begin{itemize}
\item[(C$_0$)] the $\mu$-gradient is closable in $W^{1,p}_\mu(X;\RR^m)$, i.e., for every $u\in W^{1,p}_\mu(X;\RR^m)$ and every $A\in\mathcal{O}(X)$, if $u(x)=0$ for $\mu$-a.a. $x\in A$ then $\nabla_\mu u(x)=0$ for $\mu$-a.a. $x\in A$;
\item[(C$_1$)] $X$ supports a $p$-Sobolev inequality, i.e., there exist $K>0$ and $\chi\geq 1$ such that
\begin{equation}\label{Poincare-Inequality}
\left(\int_{Q_\rho(x)}|v|^{\chi p}d\mu\right)^{1\over\chi p}\leq \rho K\left(\int_{Q_\rho(x)}|\nabla_\mu v|^pd\mu\right)^{1\over p}
\end{equation}
for all $0<\rho\leq \rho_0$, with $\rho_0>0$, and all $v\in W^{1,p}_{\mu,0}(Q_\rho(x);\RR^m)$, where, for each $A\in\mathcal{O}(X)$, $W^{1,p}_{\mu,0}(A;\RR^m)$ is the closure of $\mathcal{A}_0(A;\RR^m)$ with respect to $W^{1,p}_\mu$-norm defined in \eqref{W1pmu-norm} with $\mathcal{A}_0(A;\RR^m):=\{u\in\mathcal{A}(X;\RR^m):u=0\hbox{ on }X\setminus A\}$;
\item[(C$_2$)] $X$ satisfies the Vitali covering theorem, i.e., for every $A\subset X$ and every family $\mathcal{F}$ of closed balls in X, if $\inf\{\rho>0:\overline{Q}_\rho(x)\in\mathcal{F}\}=0$ for all $x\in A$ then there exists a countable disjointed subfamily $\mathcal{G}$ of $\mathcal{F}$ such that $\mu(A\setminus \cup_{Q\in\mathcal{G}}Q)=0$ (in other words, $A\subset \big(\cup_{Q\in\mathcal{G}}Q\big)\cup N$ with $\mu(N)=0$).
\end{itemize}

\begin{remark}
From Remark \ref{Remark-Generalization-of-Lemmacomp-EquA-mu-aa} we see that the $\mu$-gradient is closable in $\mathcal{A}(X;\RR^m)$. The assumption (C$_0$) asserts that the closability of the $\mu$-gradient can be extended from $\mathcal{A}(X;\RR^m)$ to $W^{1,p}_\mu(X;\RR^m)$. 
\end{remark}

\begin{remark}\label{ReMArK-VItALi-For-OpEN-SEtS}
As $\mu$ is a Radon measure, if $X$ satisfies the Vitali covering theorem, i.e., (C$_2$) holds, then for every $A\in\mathcal{O}(X)$ and every $\eps>0$ there exists a countable family $\{Q_{\rho_i}(x_i)\}_{i\in I}$ of disjoint open balls of $A$ with $x_i\in A$, $\rho_i\in]0,\eps[$ and $\mu(\partial Q_{\rho_i}(x_i))=0$ such that $\mu\big(A\setminus\cup_{i\in I}Q_{\rho_i}(x_i)\big)=0$.
\end{remark}

\begin{theorem}\label{IR-Theorem}
Under \eqref{p-coercivity} and \eqref{p-polynomial-growth}, if {\rm (C$_0$)}, {\rm (C$_1$)} and {\rm (C$_2$)} hold, then 
\begin{eqnarray*}
\lambda_u(x)&=&\lim_{\rho\to 0}\inf\left\{\mint_{Q_\rho(x)}\widehat{L}_y(\nabla_\mu v(y))d\mu(y):v-u\in W^{1,p}_{\mu,0}(Q_\rho(x);\RR^m)\right\}\\
&=&\lim_{\rho\to 0}\inf\left\{\mint_{Q_\rho(x)}\widehat{L}_y(\nabla_\mu u(y)+\nabla_\mu w(y))d\mu(y):w\in W^{1,p}_{\mu,0}(Q_\rho(x);\RR^m)\right\}
\end{eqnarray*}
for all $u\in W^{1,p}_\mu(X;\RR^m)$ and $\mu$-a.a. $x\in X$. 
\end{theorem}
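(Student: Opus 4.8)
I would first record that the two infima in the statement coincide. For $u\in W^{1,p}_\mu(X;\RR^m)$, $x\in X$ and $\rho>0$ with $\mu(Q_\rho(x))>0$, put
$$
m_\rho(x):=\frac{1}{\mu(Q_\rho(x))}\inf\Big\{\int_{Q_\rho(x)}\widehat L_y(\nabla_\mu v(y))\,d\mu(y):v-u\in W^{1,p}_{\mu,0}(Q_\rho(x);\RR^m)\Big\}.
$$
Since $w\mapsto u+w$ is an affine bijection of $W^{1,p}_{\mu,0}(Q_\rho(x);\RR^m)$ onto $\{v:v-u\in W^{1,p}_{\mu,0}(Q_\rho(x);\RR^m)\}$ and $\nabla_\mu v=\nabla_\mu u+\nabla_\mu w$ by linearity, the two infima displayed in Theorem \ref{IR-Theorem} both equal $m_\rho(x)$, and it remains to show $\lambda_u(x)=\lim_{\rho\to0}m_\rho(x)$ for $\mu$-a.e. $x$, the limit being taken along the co-countable set of $\rho$ with $\mu(\partial Q_\rho(x))=0$. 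The plan is to prove $\limsup_{\rho\to0}m_\rho(x)\le\lambda_u(x)$ and $\liminf_{\rho\to0}m_\rho(x)\ge\lambda_u(x)$ separately, adapting the global relaxation method of \cite[Lemmas 3.3 and 3.5]{boufonmas98} together with De Giorgi's slicing.

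For the upper bound I would fix an admissible $\rho$ and, by Proposition \ref{MR1} (or rather Lemma \ref{AddiTiONaL-LEMma}), choose $u_n\in\mathcal A(X;\RR^m)$ with $u_n\to u$ in $L^p_\mu(X;\RR^m)$ and $\widehat E(u_n;Q_\rho(x))\to\overline E(u;Q_\rho(x))$; by the $p$-coercivity of $\widehat L$ (Remark \ref{p-coercivity-p-polynomial-growth-remark}) the gradients $\nabla_\mu u_n$ remain bounded in $L^p_\mu(Q_\rho(x);\MM)$. Fixing $\theta\in]0,1[$ and $M\in\NN$, I would cut $Q_\rho(x)\setminus Q_{\theta\rho}(x)$ into $M$ concentric sub-annuli, pick for each a cut-off function $\varphi_j\in\mathcal A(X)$ supported in $Q_\rho(x)$, equal to $1$ on the corresponding inner ball, with $\|D_\mu\varphi_j\|_{L^\infty_\mu}$ of order $M/((1-\theta)\rho)$ (such cut-offs being available in the framework), and set $z^n_j:=u+\varphi_j(u_n-u)$. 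Then $z^n_j-u\in W^{1,p}_{\mu,0}(Q_\rho(x);\RR^m)$, and estimating $\widehat L_y(\nabla_\mu z^n_j)$ on the $j$-th sub-annulus by means of \eqref{MU-Gradient-Property-BiS}, condition (C$_0$) and the $p$-growth of $\widehat L$, an averaging over $j\in\{1,\dots,M\}$ yields an index $j(n)$ with
$$
\int_{Q_\rho(x)}\widehat L_y(\nabla_\mu z^n_{j(n)})\,d\mu\le\widehat E(u_n;Q_\rho(x))+\frac{c}{M}\Big(\mu(Q_\rho(x))+\|\nabla_\mu u_n\|^p_{L^p_\mu(Q_\rho(x))}+\|\nabla_\mu u\|^p_{L^p_\mu(Q_\rho(x))}\Big)+R_n,
$$
where $R_n$ gathers the terms carrying $\|D_\mu\varphi_{j(n)}\|^p_\infty\int_{Q_\rho(x)}|u_n-u|^p\,d\mu$, so $R_n\to0$ because $u_n\to u$ in $L^p_\mu(X;\RR^m)$. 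Hence $m_\rho(x)\mu(Q_\rho(x))\le\overline E(u;Q_\rho(x))+\frac cM\big(\mu(Q_\rho(x))+C^{-1}\overline E(u;Q_\rho(x))+\|\nabla_\mu u\|^p_{L^p_\mu(Q_\rho(x))}\big)$, having used $\limsup_n\|\nabla_\mu u_n\|^p_{L^p_\mu(Q_\rho(x))}\le C^{-1}\overline E(u;Q_\rho(x))$ from \eqref{p-coercivity}. Dividing by $\mu(Q_\rho(x))$, letting $\rho\to0$ — with Theorem \ref{N-C-IRT} for $\overline E(u;Q_\rho(x))/\mu(Q_\rho(x))\to\lambda_u(x)$ and the Lebesgue-point property of $1+|\nabla_\mu u|^p\in L^1_\mu(X)$, valid $\mu$-a.e. by the Vitali covering theorem (C$_2$) — and then $M\to\infty$ gives $\limsup_{\rho\to0}m_\rho(x)\le\lambda_u(x)$ for $\mu$-a.e. $x$.

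For the lower bound I would use the global method. By Theorem \ref{N-C-IRT}, $\overline E(u;\cdot)$ is the trace of a finite Borel measure of density $\lambda_u$, and De Giorgi-Letta's lemma (Lemma \ref{DeGiorgi-Letta-Lemma}) provides its additivity across a $\mu$-null interface; combining this with De Giorgi's slicing and the relaxed formula of Lemma \ref{AddiTiONaL-LEMma}, an almost-optimal Dirichlet competitor $v$ for $m_\rho(x)$ can be glued to $u$ on $X\setminus Q_\rho(x)$ and compared with $u$ so as to obtain $\int_{Q_\rho(x)}\widehat L_y(\nabla_\mu v)\,d\mu\ge\overline E(u;Q_\rho(x))-\omega(\rho)\mu(Q_\rho(x))$ with $\omega(\rho)\to0$ as $\rho\to0$, whence $\liminf_{\rho\to0}m_\rho(x)\ge\lambda_u(x)$; together with the upper bound this proves the theorem. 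This is where the three conditions really enter: the $p$-Sobolev inequality (C$_1$) is needed to dominate, on thin annuli, the $L^p_\mu$-norm of the slicing corrections by their $\mu$-gradients with a factor proportional to the radius — crucial because the minimizing sequences behind $\overline E$ converge only in $L^p_\mu$, not in $W^{1,p}_\mu$ — while (C$_0$) keeps the sliced functions in $W^{1,p}_{\mu,0}$ and (C$_2$) furnishes, $\mu$-a.e., the Lebesgue points and differentiation of measures that force $\omega(\rho)\to0$. I expect the main obstacle to be precisely this last step: controlling every error term uniformly enough that the nested limits $n\to\infty$, then $M\to\infty$, then $\rho\to0$ can all be taken, with no doubling hypothesis on $\mu$ at one's disposal.
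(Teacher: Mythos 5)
Your reduction of the two infima to a single quantity $m_\rho(x)$ is correct, and your upper bound $\limsup_{\rho\to0}m_\rho(x)\le\lambda_u(x)$ is sound: it is essentially Lemma \ref{AddiTiONaL-LEMma} localized to a ball. (Note that you neither need nor, under the hypotheses of this theorem, have cut-offs with $\|D_\mu\varphi_j\|_{L^\infty_\mu}\sim M/((1-\theta)\rho)$ --- that quantitative bound is assumption (A$_2$), which is not assumed here; the qualitative Uryshon property suffices because the term carrying $\|D_\mu\varphi_j\|^p_{L^\infty_\mu}$ is multiplied by $\|u_n-u\|^p_{L^p_\mu}\to0$.)

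The genuine gap is the lower bound. Gluing a single near-minimizer $v$ of ${\rm m}_u(Q_\rho(x))$ to $u$ outside $Q_\rho(x)$ produces one fixed function, not a sequence converging to $u$ in $L^p_\mu$, so it is not an admissible competitor in the definition of $\overline E(u;Q_\rho(x))$; and there is no a priori comparison between $\overline E(u;Q_\rho(x))$ and $\overline E(v;Q_\rho(x))$, since $v-u$ is small in $L^p_\mu$ (of order $\rho$ by (C$_1$)) but not in $W^{1,p}_\mu$, and $\overline E(\cdot\,;Q_\rho(x))$ is not continuous under such perturbations. The inequality $\int_{Q_\rho(x)}\widehat L_y(\nabla_\mu v)\,d\mu\ge\overline E(u;Q_\rho(x))-\omega(\rho)\mu(Q_\rho(x))$ that you posit is essentially the statement to be proved, and no mechanism forcing $\omega(\rho)\to0$ is supplied. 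The paper resolves this with a two-step global method that is absent from your sketch: (i) it introduces the regularized set function ${\rm m}^*_u(A):=\sup_{\eps>0}\inf\{\sum_{i}{\rm m}_u(Q_i):\{Q_i\}_{i}\in\mathcal V_\eps(A)\}$ over Vitali packings of $A$ by balls of radius less than $\eps$, and proves ${\rm m}^*_u=\overline E(u;\cdot)$ on $\mathcal{O}(X)$ --- here the gluing is performed simultaneously on countably many $\eps$-small balls, so the $p$-Sobolev inequality (C$_1$) makes the glued functions $u_\eps$ converge to $u$ in $L^p_\mu$ as $\eps\to0$, turning $\{u_\eps\}$ into a genuine recovery family; (ii) it then shows, by a separate Vitali-covering differentiation argument (the sets $N_t$ and $\mathcal G_t$ in Step 2 of the paper's proof), that ${\rm m}^*_u$ and ${\rm m}_u$ have the same volume-averaged limit at $\mu$-a.e.\ point. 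Without both steps, the single-ball argument you describe does not close.
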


In order to ``localize in $\xi$" the density formula given by Theorem \ref{IR-Theorem} we need to consider the three assumptions below.
\begin{itemize}
\item[(A$_1$)] For every $u\in W^{1,p}_\mu(X;\RR^m)$ and $\mu$-a.e. $x\in X$ there exists $u_x\in W^{1,p}_\mu(X;\RR^m)$ such that:
\begin{eqnarray}
&&\nabla_\mu u_x(y)=\nabla_\mu u(x)\hbox{ for $\mu$-a.a. $y\in X$};\label{FinALAssuMpTIOnOne}\\
&&\lim_{\rho\to0}{1\over\rho^p}\mint_{Q_\rho(x)}|u(y)-u_x(y)|^pd\mu(y)=0.\label{FinALAssuMpTIOnTwo}
\end{eqnarray}
\item[(A$_2$)] For every $x\in X$, every $\rho>0$ and every $t\in]0,1[$ there exists a Uryshon function $\varphi\in\mathcal{A}(X)$ for the pair $(X\setminus Q_\rho(x),\overline{Q}_{t\rho}(x))$ such that 
$$
\|D_\mu\varphi\|_{L^\infty_\mu(X;\RR^N)}\leq{\alpha\over\rho(1-t)}
$$
for some $\alpha>0$.
\item[(A$_3$)] The measure $\mu$ is doubling, i.e., ${\mu(Q_\rho(x))\leq\beta\mu(Q_{\rho\over 2}(x))}$ for some $\beta\geq 1$, all $\rho>0$ and all $x\in X$. (In particular, $X$ satisfies the Vitali covering theorem, i.e., (C$_2$) holds.) We futhermore assume that for $\mu$-a.e. $x\in X$,
\begin{equation}\label{DoublINgAssUMpTiON}
\lim_{t\to 1^-}\limsup_{\rho\to0}{\mu(Q_{t\rho}(x)\over\mu(Q_\rho(x))}=1.
\end{equation}
\end{itemize} 

\begin{remark}
If there is $\theta:]0,1[\to[1,\infty[$ with $\lim_{t\to 1}\theta(t)=1$ such that 
$
\mu(Q_\rho(x))\leq\theta(t)\mu(Q_{t\rho}(x))
$
for all $\rho>0$, all $x\in X$ and all $t\in]0,1[$, then (A$_3$) holds.
\end{remark}

\begin{theorem}\label{FiNaL-Main-TheOrEM}
Under \eqref{p-coercivity} and \eqref{p-polynomial-growth}, if {\rm (C$_0$)}, {\rm (C$_1$)}, {\rm (A$_1$)}, {\rm (A$_2$)} and {\rm (A$_3$)} hold, then 
$$
\overline{E}(u;A)=\int_A \mathcal{Q}_\mu L_x(\nabla_\mu u(x))d\mu(x) 
$$
for all $u\in W^{1,p}_\mu(X;\RR^m)$ and all $A\in\mathcal{O}(X)$ with $\mathcal{Q}_\mu L_x:T_\mu^m(x)\to[0,\infty]$ given by
$$
\mathcal{Q}_\mu L_x(\xi):=\lim_{\rho\to 0}\inf\left\{\mint_{Q_\rho(x)}\widehat{L}_y(\xi+\nabla_\mu w(y))d\mu(y):w\in W^{1,p}_{\mu,0}(Q_\rho(x);\RR^m)\right\}.
$$
\end{theorem}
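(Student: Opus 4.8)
The plan is to derive the representation from Theorem~\ref{IR-Theorem} by a localization (``freezing of the $\mu$-gradient'') argument resting on De Giorgi's slicing method. By Theorem~\ref{N-C-IRT}, $\overline E(u;A)=\int_A\lambda_u\,d\mu$; and since the hypotheses (C$_0$), (C$_1$), (C$_2$) of Theorem~\ref{IR-Theorem} are in force ((C$_2$) being contained in (A$_3$)), Theorem~\ref{IR-Theorem} gives
$$
\lambda_u(x)=\lim_{\rho\to0}\Lambda^u_\rho(x),\qquad
\Lambda^u_\rho(x):=\inf\Big\{\mint_{Q_\rho(x)}\widehat{L}_y\big(\nabla_\mu u(y)+\nabla_\mu w(y)\big)d\mu(y):w\in W^{1,p}_{\mu,0}(Q_\rho(x);\RR^m)\Big\}
$$
for $\mu$-a.a.\ $x$. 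Writing $\Lambda^\xi_\rho(x):=\inf\{\mint_{Q_\rho(x)}\widehat{L}_y(\xi+\nabla_\mu w(y))d\mu(y):w\in W^{1,p}_{\mu,0}(Q_\rho(x);\RR^m)\}$ for $\xi\in\MM$, so that $\mathcal{Q}_\mu L_x(\xi)=\lim_{\rho\to0}\Lambda^\xi_\rho(x)$, the statement reduces to showing that, for $\mu$-a.a.\ $x$, $\lim_{\rho\to0}\Lambda^{\nabla_\mu u(x)}_\rho(x)$ exists and equals $\lambda_u(x)$. The existence of this limit I would get from (A$_1$): the associated $u_x$ has $\nabla_\mu u_x\equiv\xi:=\nabla_\mu u(x)$, whence $\Lambda^\xi_\rho(x)=\Lambda^{u_x}_\rho(x)$, and Theorem~\ref{IR-Theorem} applied to the fixed function $u_x$ yields $\lim_\rho\Lambda^{u_x}_\rho(x)=\lambda_{u_x}(x)$.

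Fix then $x$ in the full-measure set of ``good'' points: a $\mu$-Lebesgue point of $\nabla_\mu u$ and of $|\nabla_\mu u|^p$, with $\mu(Q_\rho(x))\to0$, $\Lambda^u_\rho(x)\to\lambda_u(x)<\infty$, $\Lambda^\xi_\rho(x)\to\mathcal{Q}_\mu L_x(\xi)$, $\rho^{-p}\mint_{Q_\rho(x)}|u-u_x|^p d\mu\to0$ (from (A$_1$)), and such that \eqref{DoublINgAssUMpTiON} holds. The crux is a two-sided comparison of $\Lambda^u_\rho(x)$ with $\Lambda^\xi_\rho(x)$. For one direction, given $t\in]0,1[$, $L\in\NN$ and $\eta>0$, I would take a near-optimal $w_\rho$ for $\Lambda^\xi_\rho(x)$, slice the annulus $Q_\rho(x)\setminus Q_{t\rho}(x)$ into $L$ concentric sub-annuli $S_1,\dots,S_L$ with $\mu$-null bounding spheres, and by the pigeonhole principle pick $S_{i^*}$ on which each of $\mu$, $|\nabla_\mu w_\rho|^p$, $|w_\rho|^p$, $|\nabla_\mu u-\xi|^p$ and $|u-u_x|^p$ integrates to at most $L^{-1}$ of its integral over the whole annulus. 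With a Urysohn cut-off $\varphi\in\mathcal{A}(X)$ for the pair $(X\setminus Q_{r_{i^*}}(x),\overline{Q}_{r_{i^*-1}}(x))$ furnished by (A$_2$) (so $\|D_\mu\varphi\|_{L^\infty_\mu}\le\alpha L/((1-t)\rho)$), the function $\tilde w:=\varphi(w_\rho-(u-u_x))$, which lies in $W^{1,p}_{\mu,0}(Q_\rho(x);\RR^m)$, is a competitor for $\Lambda^u_\rho(x)$: $\nabla_\mu u+\nabla_\mu\tilde w$ equals $\xi+\nabla_\mu w_\rho$ on $Q_{r_{i^*-1}}(x)$ and equals $\nabla_\mu u$ on $Q_\rho(x)\setminus Q_{r_{i^*}}(x)$, while on $S_{i^*}$ the $p$-growth bound \eqref{p-polynomial-growth} controls the excess energy. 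Bounding $\rho^{-p}\mint_{Q_\rho(x)}|w_\rho|^p$ by $\mint_{Q_\rho(x)}|\nabla_\mu w_\rho|^p$ through the Sobolev inequality (C$_1$) (and using $\mint_{Q_\rho(x)}|\nabla_\mu w_\rho|^p\le C$ and $\Lambda^\xi_\rho(x)\le c(1+|\xi|^p)$, both consequences of \eqref{p-coercivity}--\eqref{p-polynomial-growth}), using (A$_1$) for the $|u-u_x|^p$ terms and the Lebesgue-point property for the $|\nabla_\mu u-\xi|^p$ and $|\nabla_\mu u|^p$ terms, one reaches
$$
\Lambda^u_\rho(x)\ \le\ \Lambda^\xi_\rho(x)+\eta+\frac{C_\xi}{L}+c(1+|\xi|^p)\Big(1-\frac{\mu(Q_{t\rho}(x))}{\mu(Q_\rho(x))}\Big)+o_\rho(1),
$$
where $o_\rho(1)\to0$ as $\rho\to0$ for fixed $t,L,\eta$; exchanging the roles of $u$ and $u_x$ and using $\Lambda^{u_x}_\rho(x)=\Lambda^\xi_\rho(x)$ gives the analogous inequality with $\Lambda^u_\rho(x)$ and $\Lambda^\xi_\rho(x)$ interchanged.

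Finally I would take $\liminf_{\rho\to0}$ in each of the two inequalities. Since $\Lambda^u_\rho(x)\to\lambda_u(x)$ and $\Lambda^\xi_\rho(x)\to\mathcal{Q}_\mu L_x(\xi)$, this gives, for all $t\in]0,1[$, $L\in\NN$, $\eta>0$,
$$
\lambda_u(x)\ \le\ \mathcal{Q}_\mu L_x(\xi)+\eta+\frac{C_\xi}{L}+c(1+|\xi|^p)\Big(1-\limsup_{\rho\to0}\frac{\mu(Q_{t\rho}(x))}{\mu(Q_\rho(x))}\Big),
$$
together with the same inequality with $\lambda_u(x)$ and $\mathcal{Q}_\mu L_x(\xi)$ interchanged; letting $\eta\to0$, $L\to\infty$ and then $t\to1^-$, and invoking \eqref{DoublINgAssUMpTiON}, yields $\lambda_u(x)=\mathcal{Q}_\mu L_x(\nabla_\mu u(x))$ for $\mu$-a.a.\ $x$, hence $\overline E(u;A)=\int_A\mathcal{Q}_\mu L_x(\nabla_\mu u(x))d\mu(x)$ for all $u$ and all $A$.

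I expect the main obstacle to be the bookkeeping in the two-sided estimate: the parameters must be sent to their limits in the right order — $\rho\to0$ first with $t,L,\eta$ frozen, then $\eta\to0$, $L\to\infty$, and finally $t\to1^-$ — so that every error term actually vanishes, above all the cut-off contribution $\|D_\mu\varphi\|_{L^\infty_\mu}^p\mint_{Q_\rho(x)}(|w_\rho|^p+|u-u_x|^p)$, which grows like $L^p/((1-t)^p\rho^p)$ and is controlled only through the joint effect of the pigeonhole gain $L^{-1}$, the Sobolev inequality (C$_1$), and the smallness \eqref{FinALAssuMpTIOnTwo} of $u-u_x$; the ``wasted'' shell of mass $\mu(Q_\rho(x))-\mu(Q_{t\rho}(x))$, carrying the constant density $\widehat{L}_y(\xi)$, which is absorbed precisely by the doubling-type condition \eqref{DoublINgAssUMpTiON} of (A$_3$); and the terms produced by the fact that $\widehat{L}_y$ is merely continuous under perturbations of its matrix argument. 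Thus (C$_1$), (A$_1$), (A$_2$) and (A$_3$) enter in a tightly intertwined way, (C$_0$) entering through Theorem~\ref{IR-Theorem}; a further technical point is the measurability of $x\mapsto u_x$ needed to apply Theorem~\ref{IR-Theorem} to $u_x$ at the point $x$ itself.
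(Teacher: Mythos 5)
Your reduction is the paper's: by Theorems \ref{N-C-IRT} and \ref{IR-Theorem} it suffices to show $\lim_{\rho\to0}{\rm m}_u(Q_\rho(x))/\mu(Q_\rho(x))=\lim_{\rho\to0}{\rm m}_{u_x}(Q_\rho(x))/\mu(Q_\rho(x))$ for $\mu$-a.a.\ $x$, with $u_x$ from (A$_1$), and this is done by gluing competitors across an annulus and letting $\rho\to0$, then $t\to1^-$. The gap is in your gluing. You take $w_\rho$ near-optimal on the \emph{full} ball $Q_\rho(x)$ and truncate it, so the cut-off contribution contains $\|D_\mu\varphi\|_{L^\infty_\mu}^p\int_{S_{i^*}}|w_\rho|^pd\mu$ with $\|D_\mu\varphi\|_{L^\infty_\mu}\le \alpha L/((1-t)\rho)$. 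The three tools you invoke do not control it: the pigeonhole yields a factor $1/L$ against the factor $L^p$ coming from $\|D_\mu\varphi\|^p$, and (C$_1$) combined with H\"older gives only
$$
\frac{1}{\rho^p}\mint_{Q_\rho(x)}|w_\rho|^pd\mu\;\le\;K^p\,\mu(Q_\rho(x))^{1-\frac1\chi}\mint_{Q_\rho(x)}|\nabla_\mu w_\rho|^pd\mu,
$$
whose right-hand side is merely bounded (by coercivity) and is not $o(1)$ when $\chi=1$, which (C$_1$) permits. Hence this cut-off term is of size $L^{p-1}/(1-t)^p$ uniformly in $\rho$: it is not the $C_\xi/L$ you claim, it blows up in the limit $L\to\infty$ that you need in order to kill $\mint_{S_{i^*}}|\nabla_\mu w_\rho|^pd\mu\le C/L$, and the smallness \eqref{FinALAssuMpTIOnTwo} helps only the $|u-u_x|^p$ part of the cut-off. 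No order of the limits $\rho\to0$, $L\to\infty$, $t\to1^-$ closes the estimate.

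The paper removes the obstruction at the source: the near-minimizer $w$ is taken for ${\rm m}_{u_x}(Q_{t\rho}(x))$ on the \emph{inner} ball, so $w\in W^{1,p}_{\mu,0}(Q_{t\rho}(x);\RR^m)$ vanishes on the annulus and, by (C$_0$), so does $\nabla_\mu w$; thus $w$ is never multiplied by the cut-off. A single Urysohn function from (A$_2$) truncates only $u_x-u$ (via $v=\varphi u_x+(1-\varphi)u$; no slicing or pigeonhole is needed), and the resulting term $\alpha^p(1-t)^{-p}\rho^{-p}\mint_{Q_\rho(x)}|u-u_x|^pd\mu$ vanishes as $\rho\to0$ for fixed $t$ by \eqref{FinALAssuMpTIOnTwo}. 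The mismatch of radii $t\rho$ versus $\rho$ is absorbed through ${\rm m}_{u_x}(Q_{t\rho}(x))\le{\rm m}^*_{u_x}(Q_\rho(x))$ and \eqref{FiNalRemarKEquatION}, and the shell term $c(1+|\xi|^p)\big(1-\mu(Q_{t\rho}(x))/\mu(Q_\rho(x))\big)$ disappears when $t\to1^-$ by \eqref{DoublINgAssUMpTiON}, exactly as in your outline. If you rebuild your competitor this way (and symmetrically for the reverse inequality), the remainder of your argument and your order of limits go through.
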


\begin{remark}
According to the classical theory of relaxation, we can say that this formula plays the role of the classical Dacorogna's quasiconvexification formula in the euclidean Lebesgue setting (see \cite{dacorogna08} for more details). It is then natural to call $\{Q_\mu L_x\}$ the $\mu$-quasiconvexification (or the quasiconvexification with respect to $\mu$) of $\{L_x\}$. 
\end{remark}

\subsection{Application to the setting of Cheeger-Keith's differentiable structure} We begin with the concept of upper gradient introduced by Heinonen and Koskela (see \cite{heinonen-koskela98}). 
\begin{definition}
A Borel function $g:X\to[0,\infty]$ is said to be an upper gradient for $f:X\to\RR$  if 
$
|f(c(1))-f(c(0))|\leq\int_0^1 g(c(s))ds
$
for all continuous rectifiable curves $c:[0,1]\to X$. 
\end{definition}
The concept of upper gradient has been generalized by Cheeger as follows (see \cite[Definition 2.8]{cheeger99}). 
\begin{definition}
A function $g\in L^p_\mu(X)$, with $1<p<\infty$, is said to be a $p$-weak upper gradient for $f\in L^p_\mu(X)$ if there exist $\{f_n\}_n\subset L^p_\mu(X)$ and $\{g_n\}_n\subset L^p_\mu(X)$ such that for each $n\geq 1$, $g_n$ is an upper gradient for $f_n$, $f_n\to f$ in $L^p_\mu(X)$ and $g_n\to g$ in $L^p_\mu(X)$. 
\end{definition}
From Cheeger and Keith (see \cite[Theorem 4.38]{cheeger99} and \cite[Definition 2.1.1 and Theorem 2.3.1]{keith1-04}) we have
\begin{theorem}\label{cheeger-theorem}
Assume that $\mu$ is doubling and $X$ supports a weak $(1,p)$-Poincar\'e inequality with $1<p<\infty$, i.e., there exist $C>0$ and $\sigma\geq 1$ such that for every $\rho>0$, every $f\in L^p_\mu(X)$ and every $p$-weak upper gradient $g\in L^p_\mu(X)$ for $f$,
$$
\mint_{Q_\rho(x)}\left|f-\mint_Q f d\mu\right|d\mu\leq \rho C\left(\mint_{Q_{\sigma \rho}(x)} g^p d\mu\right)^{1\over p}.
$$
Then, there exists a countable family $\{(X_\alpha,\xi^\alpha)\}_\alpha$ of $\mu$-measurable disjoint subsets $X_\alpha$ of $X$ with $\mu(X\setminus\cup_\alpha X_\alpha)=0$ and of functions $\xi^\alpha=(\xi^\alpha_1,\cdots,\xi^\alpha_{N(\alpha)}):X\to\RR^{N(\alpha)}$ with $\xi^\alpha_i\in {\rm Lip}(X)$ satisfying the following properties{\rm:}
\begin{enumerate}[label={\rm(\alph*)}]
\item  there exists an integer $N\geq 1$ such that $N(\alpha)\in\{1,\cdots, N\}$ for all $\alpha;$
\item for every $\alpha$ and every $f\in{\rm Lip}(X)$ there is a unique $D^\alpha f\in L^\infty_\mu(X_\alpha;\RR^{N(\alpha)})$ such that for $\mu$-a.e. $x\in X_\alpha$,
$$
\lim_{\rho\to 0}{1\over\rho}\|f-f_x\|_{L^\infty_{\mu(Q_\rho(x))}}=0,
$$
where $f_x\in {\rm Lip}(X)$ is given by $f_x(y):=f(x)+D^\alpha f(x)\cdot(\xi^\alpha(y)-\xi^\alpha(x));$ in particular $D^\alpha f_x(y)=D^\alpha f(x)$ for $\mu$-a.a. $y\in X_\alpha;$
\item the operator $D:{\rm Lip}(X)\to L^\infty_\mu(X;\RR^N)$ given by
$$
Df:=\sum_\alpha \mathds{1}_{X_\alpha}D^\alpha f,
$$
where $\mathds{1}_{X_\alpha}$ denotes the characteristic function of $X_\alpha$, is linear and, for each $f,g\in{\rm Lip}(X)$, $D(fg)=fDg+gDf;$
\item for every $f\in{\rm Lip}(X)$, $Df=0$ $\mu$-a.e. on every $\mu$-measurable set where $f$ is constant.\end{enumerate}
\end{theorem}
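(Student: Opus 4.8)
Theorem \ref{cheeger-theorem} is the Cheeger--Keith differentiability theorem for PI spaces, so the plan is to reproduce the argument of \cite{cheeger99} in the streamlined form of \cite{keith1-04}; I only sketch the skeleton. For $f\in{\rm Lip}(X)$ and $x\in X$ put
$$
{\rm Lip}\,f(x):=\limsup_{\rho\to0}\ \sup_{y\in Q_\rho(x)}{|f(y)-f(x)|\over\rho},\qquad {\rm lip}\,f(x):=\liminf_{\rho\to0}\ \sup_{y\in Q_\rho(x)}{|f(y)-f(x)|\over\rho}.
$$
\emph{Step 1 (PI-space preliminaries).} Since $\mu$ is doubling and $X$ supports a weak $(1,p)$-Poincar\'e inequality, $X$ is a PI space and the standard consequences apply: every $f\in{\rm Lip}(X)$ has a minimal $p$-weak upper gradient $g_f\in L^p_\mu(X)$ with $g_f\le{\rm lip}\,f\le{\rm Lip}\,f$ $\mu$-a.e.; one has $g_f=0$ $\mu$-a.e. on every $\mu$-measurable set on which $f$ is $\mu$-a.e. constant (insert the truncation $\min(\max(f,c-\eps),c+\eps)$ in the definition of $p$-weak upper gradient); and, the one genuinely non-elementary analytic input, the slopes coincide: ${\rm Lip}\,f={\rm lip}\,f=g_f$ $\mu$-a.e. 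This last identity is obtained by a telescoping estimate of $\big|f(x)-\mint_{Q_{2^{-k}\rho}(x)}f\,d\mu\big|$ along the chain of balls $Q_{2^{-k}\rho}(x)$ via the Poincar\'e inequality, summed over $k$, combined with a maximal-function argument to control the exceptional set.

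\textbf{Step 2: germ spaces and the uniform dimension bound.} For $\mu$-a.e. $x$, declare $f,h\in{\rm Lip}(X)$ equivalent at $x$ if ${\rm Lip}(f-h)(x)=0$; by Step 1, $[f]\mapsto{\rm Lip}\,f(x)$ is then a well-defined norm on the resulting quotient vector space $V_x$ of germs. The crucial claim -- and the main obstacle of the whole proof -- is that there is an integer $N$, depending only on the doubling and Poincar\'e constants, with $\dim V_x\le N$ for $\mu$-a.e. $x$. This is proved by contradiction: were there $\xi_1,\dots,\xi_{N+1}\in{\rm Lip}(X)$ whose germs are uniformly $\delta$-independent on a set of positive measure (i.e. ${\rm Lip}(\sum_i\lambda_i\xi_i)(x)\ge\delta|\lambda|$ for all $\lambda\in\RR^{N+1}$), then a blow-up at a density point combined with the Poincar\'e inequality would, once $N$ is large relative to the doubling dimension, produce a Lipschitz function whose pointwise slope -- hence minimal upper gradient -- is far smaller than its mean oscillation across a ball, contradicting the Poincar\'e inequality. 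This is precisely the technical core of \cite{cheeger99}, and is where essentially all the difficulty lies.

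\textbf{Step 3: measurable charts and the derivative.} Using the dimension bound, build the $X_\alpha$ by a greedy exhaustion. Since ${\rm Lip}(X)$ is separable in the sup norm (and rational coefficients and thresholds suffice), there is a countable family of pairs $(S_\alpha,\xi^\alpha)$ with $S_\alpha$ $\mu$-measurable, $\xi^\alpha=(\xi^\alpha_1,\dots,\xi^\alpha_{N(\alpha)})\in{\rm Lip}(X)^{N(\alpha)}$ and $N(\alpha)\le N$, such that at $\mu$-a.e. $x\in S_\alpha$ the germs $[\xi^\alpha_i]$ are $\delta$-independent and span $V_x$; refining to disjoint sets exhausting $X$ up to a $\mu$-null set gives $\{(X_\alpha,\xi^\alpha)\}_\alpha$ with $\mu(X\setminus\cup_\alpha X_\alpha)=0$ and $N(\alpha)\in\{1,\dots,N\}$, i.e. (a). Now fix $\alpha$ and $f\in{\rm Lip}(X)$: for $\mu$-a.e. $x\in X_\alpha$, since $\{[\xi^\alpha_i]\}$ is a basis of $V_x$ there is a \emph{unique} $D^\alpha f(x)\in\RR^{N(\alpha)}$ with ${\rm Lip}\big(f-D^\alpha f(x)\cdot\xi^\alpha\big)(x)=0$; unwinding this is exactly the expansion (b) with $f_x(y):=f(x)+D^\alpha f(x)\cdot(\xi^\alpha(y)-\xi^\alpha(x))$, and $D^\alpha f_x\equiv D^\alpha f(x)$ on $X_\alpha$ because $D^\alpha\xi^\alpha_i=e_i$. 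Measurability of $x\mapsto D^\alpha f(x)$ holds because it solves on $X_\alpha$ a linear system whose coefficients are built from the $\mu$-measurable blow-up data of $f$ and the $\xi^\alpha_i$, invertible by $\delta$-independence.

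\textbf{Step 4: properties (c) and (d).} Linearity of each $D^\alpha$, hence of $D=\sum_\alpha\mathds{1}_{X_\alpha}D^\alpha$, is immediate from the uniqueness and the linearity of the defining condition. For the Leibniz rule, multiply the first-order expansions of $f$ and $g$ at $x$: using that $f,g$ are bounded near $x$ (as $X$ is compact) the cross terms are $\big(f(x)D^\alpha g(x)+g(x)D^\alpha f(x)\big)\cdot(\xi^\alpha(y)-\xi^\alpha(x))$ while the product of the remainders is $o(d(x,y))$, so $D(fg)=fDg+gDf$, i.e. (c). Finally, if $f$ is constant on a $\mu$-measurable set $S$, then $g_f=0$ $\mu$-a.e. on $S$ by Step 1, hence ${\rm Lip}\,f=0$ $\mu$-a.e. on $S$; then ${\rm Lip}\big(D^\alpha f(x)\cdot\xi^\alpha\big)(x)=0$, and $\delta$-independence forces $D^\alpha f(x)=0$ for $\mu$-a.e. $x\in S\cap X_\alpha$, which is (d). As noted, the only hard step is the uniform finite-dimensionality bound of Step 2; Steps 1, 3 and 4 are bookkeeping once that is in hand.
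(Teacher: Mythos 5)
First, a point of comparison: the paper itself contains no proof of Theorem \ref{cheeger-theorem}. It is imported verbatim from Cheeger \cite[Theorem 4.38]{cheeger99} and Keith \cite[Definition 2.1.1 and Theorem 2.3.1]{keith1-04}, and the paper only verifies afterwards (in Proposition \ref{Prop-Cheeger}) that this differentiable structure satisfies the hypotheses (C$_0$)--(C$_2$), (A$_1$)--(A$_3$). So there is no internal argument to measure yours against; the relevant question is whether your text would stand on its own as a proof of the Cheeger--Keith theorem. Your outline does follow the standard architecture of that literature: minimal $p$-weak upper gradients and the a.e.\ identity ${\rm Lip}\,f={\rm lip}\,f=g_f$; finite-dimensionality of the spaces $V_x$ of Lipschitz germs; a measurable chart decomposition $\{(X_\alpha,\xi^\alpha)\}$; then linearity, the Leibniz rule and locality as bookkeeping. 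Steps 3 and 4 are essentially correct conditional on Steps 1 and 2 (modulo the measurable-selection details you wave at).

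The genuine gap is the one you flag yourself: the two decisive analytic inputs --- the identity ${\rm Lip}\,f={\rm lip}\,f=g_f$ $\mu$-a.e.\ and the uniform bound $\dim V_x\leq N$ depending only on the doubling and Poincar\'e constants --- are described in one sentence each (``telescoping plus a maximal-function argument'', ``a blow-up at a density point \dots would contradict the Poincar\'e inequality'') but not carried out. These are not routine verifications; they constitute essentially the entire content of Cheeger's theorem, and the indications given do not determine the argument (for instance, the contradiction in your Step 2 actually requires the asymptotic generalized linearity machinery of \cite[\S 3--4]{cheeger99}, or alternatively Keith's Lip--lip comparability route, and nothing in your sketch selects or reconstructs either). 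As it stands, your text is an accurate roadmap of the known proof --- which puts it on the same footing as the paper's decision to cite rather than prove --- but it is not itself a proof of the statement; if a self-contained argument is intended, Steps 1 and 2 must be expanded from assertions into proofs, or explicitly replaced by citations as the paper does.
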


Let ${\rm Lip}(X;\RR^m):=[{\rm Lip(X)}]^m$ and let $\nabla:{\rm Lip}(X;\RR^m)\to L^\infty_\mu(X;\MM)$ given by \eqref{vectorial-D}. From Theorem \ref{cheeger-theorem}(d) we see that $\nabla_\mu\equiv\nabla$, where $\nabla_\mu$ is defined by \eqref{mu-vectorial-D}. (In fact, by Theorem \ref{cheeger-theorem}(d), for $\mu$-a.e. $x\in X$, we have $N_\mu^m(x)\equiv\{0\}$ and so $T_\mu^m(x)\equiv\mathbb{M}^{m\times N}$. In particular $\widehat{L}_x\equiv L_x$.) 

\begin{remark}
In the euclidean setting, i.e., when $X\equiv\overline{\Omega}$, where $\Omega$ is a bounded open subset of $\RR^N$, $\mathcal{A}(X;\RR^m)\equiv C^1(\overline{\Omega};\RR^m)$ and $\nabla$ is the classical gradient, we have $\widehat{L}_x\not\equiv L_x$ whenever $\mu$ is not absolutely continuous with respect to the Lebesgue measure.
\end{remark}

The $(\mu,p)$-Sobolev space $W^{1,p}_\mu(X;\RR^m)$ obtained as the closure of ${\rm Lip}(X;\RR^m)$ with respect to the $W^{1,p}_\mu$-norm defined in \eqref{W1pmu-norm} is called the $p$-Cheeger-Sobolev space. In this framework, from Theorem \ref{Main-Theorem-Convex} we obtain

\begin{corollary}\label{coro-Cheeger-1}
Under the hypotheses of Theorem {\rm\ref{cheeger-theorem}}, if  \eqref{p-coercivity} and \eqref{p-polynomial-growth} hold and if $L_x$ is convex for $\mu$-a.a. $x\in X$, then
$$
\overline{E}(u;A)=\int_A L_x(\nabla_\mu u(x))d\mu(x)
$$
for all $u\in W^{1,p}_\mu(X;\RR^m)$ and all $A\in\mathcal{O}(X)$.
\end{corollary}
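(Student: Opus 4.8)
The plan is to deduce the corollary directly from Theorem \ref{Main-Theorem-Convex}; the only real work is to check that the abstract framework of \S 2.1--\S 2.2 specializes correctly to the Cheeger--Keith setting and that in this setting $\widehat{L}_x\equiv L_x$ for $\mu$-a.a. $x$.

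First I would verify the structural hypotheses. By Remark \ref{ReMaRk-ExaMpLEs}, ${\rm Lip}(X)$ is a subalgebra of $C(X)$ containing $1$ and enjoying the Uryshon property. The operator $D:{\rm Lip}(X)\to L^\infty_\mu(X;\RR^N)$ furnished by Theorem \ref{cheeger-theorem}(c) is linear and satisfies the Leibniz rule \eqref{Der-Prop}. For \eqref{Der-Prop-1}, observe that every $K\in\mathcal{K}(X)$ is closed, hence $\mu$-measurable, so that if $f\in{\rm Lip}(X)$ equals a constant $c$ on $K$, then Theorem \ref{cheeger-theorem}(d) gives $Df=0$ $\mu$-a.e. on $K$. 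Thus all the assumptions under which $\overline{E}$, $\widehat{E}$, $\nabla_\mu$, $N^m_\mu$, $T^m_\mu$ and Theorem \ref{Main-Theorem-Convex} were set up are in force, with $\mathcal{A}(X;\RR^m)={\rm Lip}(X;\RR^m)$ and $W^{1,p}_\mu(X;\RR^m)$ the $p$-Cheeger--Sobolev space.

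Next I would record the reduction $N^m_\mu(x)=\{0\}$ for $\mu$-a.e. $x\in X$, as already announced in the text preceding the corollary. Indeed, let $v\in\mathcal{A}^m_0$, i.e. $v$ vanishes on $\spt(\mu)$. Since $\spt(\mu)$ is closed and each component $v_i$ is identically $0$ (in particular constant) on it, Theorem \ref{cheeger-theorem}(d) yields $Dv_i=0$ $\mu$-a.e. on $\spt(\mu)$; as $\mu(X\setminus\spt(\mu))=0$, this gives $\nabla v=0$ $\mu$-a.e. on $X$. By the remark following the definition of $N^m_\mu$, $N^m_\mu(x)=\{\nabla v(x):v\in\mathcal{A}^m_0\}=\{0\}$ for $\mu$-a.a. $x$. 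Consequently, for $\mu$-a.e. $x$ one has $T^m_\mu(x)=\MM$ and $P^m_\mu(x)={\rm id}$, so that $\nabla_\mu\equiv\nabla$ on ${\rm Lip}(X;\RR^m)$ — and, by density, also on $W^{1,p}_\mu(X;\RR^m)$ — and
$$
\widehat{L}_x(\xi)=\inf_{\zeta\in N^m_\mu(x)}L_x(\xi+\zeta)=L_x(\xi)\quad\hbox{for all }\xi\in T^m_\mu(x)=\MM.
$$

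Finally, since $L_x$ is convex for $\mu$-a.a. $x\in X$ and $\widehat{L}_x\equiv L_x$, the function $\widehat{L}_x$ is convex for $\mu$-a.a. $x$; together with the standing assumptions \eqref{p-coercivity} and \eqref{p-polynomial-growth}, all hypotheses of Theorem \ref{Main-Theorem-Convex} hold. That theorem then gives, for all $u\in W^{1,p}_\mu(X;\RR^m)$ and all $A\in\mathcal{O}(X)$,
$$
\overline{E}(u;A)=\int_A\widehat{L}_x(\nabla_\mu u(x))\,d\mu(x)=\int_A L_x(\nabla_\mu u(x))\,d\mu(x),
$$
which is the assertion. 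There is no substantive obstacle here: the only point requiring care is the measurability bookkeeping needed to invoke Theorem \ref{cheeger-theorem}(d) (for the sets in $\mathcal{K}(X)$ and for $\spt(\mu)$), after which the corollary is an immediate specialization of Theorem \ref{Main-Theorem-Convex}.
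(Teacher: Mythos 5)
Your proof is correct and follows essentially the same route as the paper: the text preceding the corollary uses Theorem \ref{cheeger-theorem}(d) to get $N^m_\mu(x)\equiv\{0\}$, hence $\nabla_\mu\equiv\nabla$ and $\widehat{L}_x\equiv L_x$, and the corollary is then read off from Theorem \ref{Main-Theorem-Convex}. Your extra verification of \eqref{Der-Prop}, \eqref{Der-Prop-1} and of the vanishing of $\nabla v$ for $v\in\mathcal{A}^m_0$ simply makes explicit what the paper leaves implicit.
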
 

For the non-convex case, we have

\begin{proposition}\label{Prop-Cheeger}
Under the hypotheses of Theorem {\rm\ref{cheeger-theorem}}, the assumptions {\rm(C$_0$)}, {\rm(C$_1$)}, {\rm(C$_2$)}, {\rm(A$_1$)} and {\rm(A$_2$)} hold. If moreover $(X,d)$ is a length space then \eqref{DoublINgAssUMpTiON} is also satisfied, i.e., {\rm (A$_3$)} holds. 
\end{proposition}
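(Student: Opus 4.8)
The plan is to specialise each of the five conditions to the Cheeger--Keith setting and verify them in turn. Here $\mathcal{A}(X)={\rm Lip}(X)$, and by Theorem~\ref{cheeger-theorem}(d) we have $N_\mu^m(x)=\{0\}$, $T_\mu^m(x)=\MM$ and $\nabla_\mu\equiv\nabla\equiv D$ (the Cheeger differential), so that $\widehat L_x\equiv L_x$; thus each condition becomes a statement about $D$ and the doubling/Poincar\'e geometry of $X$. Two of them are immediate. For {\rm(C$_2$)} one notes that a doubling measure satisfies the Vitali covering theorem via the basic $5r$-covering lemma (see e.g.\ \cite{heinonen07}), and $\mu$ is doubling by hypothesis. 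For {\rm(A$_2$)}, given $x\in X$, $\rho>0$ and $t\in\,]0,1[$, I would take the tent function $\varphi(y):=\min\{1,\frac{{\rm dist}(y,X\setminus Q_\rho(x))}{(1-t)\rho}\}$: it lies in ${\rm Lip}(X)=\mathcal{A}(X)$, it is a Uryshon function for $(X\setminus Q_\rho(x),\overline{Q}_{t\rho}(x))$ since ${\rm dist}(\overline{Q}_{t\rho}(x),X\setminus Q_\rho(x))\geq(1-t)\rho$, and its local Lipschitz constant is at most $1/((1-t)\rho)$; as $|D_\mu\varphi|\leq{\rm Lip}(\varphi)$ $\mu$-a.e., {\rm(A$_2$)} follows with $\alpha=1$.

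Next I would treat {\rm(C$_1$)} and {\rm(C$_0$)}. For {\rm(C$_1$)} I take $\chi=1$ (here $\mu$ is atomless, being doubling on a connected space, so $\mu(Q_\rho(x))\to0$ and no $\chi>1$ with a constant uniform as $\rho\to0$ is possible). By the self-improvement of Poincar\'e inequalities (see \cite{hajlaszkoskela00}), the weak $(1,p)$-Poincar\'e inequality upgrades, under doubling, to a weak $(p,p)$-Poincar\'e inequality; applied to $v\in W^{1,p}_{\mu,0}(Q_\rho(x);\RR^m)$ extended by $0$ to $X$ — approximated by Lipschitz functions vanishing off $Q_\rho(x)$, for which $|Dv|$ is comparable to the minimal $p$-weak upper gradient (\cite{cheeger99}) — this yields the Friedrichs-type inequality $\|v\|_{L^p_\mu(Q_\rho(x);\RR^m)}\leq K\rho\|\nabla_\mu v\|_{L^p_\mu(Q_\rho(x);\MM)}$, i.e.\ {\rm(C$_1$)}. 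For {\rm(C$_0$)} I would invoke the closability of the Cheeger differential on $W^{1,p}_\mu(X;\RR^m)$, which holds in PI spaces and is part of the structure theory making $W^{1,p}_\mu(X;\RR^m)$ coincide with the Sobolev space defined via upper gradients (\cite{cheeger99}); the \emph{localised} version required in {\rm(C$_0$)} then follows by multiplying an approximating sequence by a cut-off from {\rm(A$_2$)} supported in the open set $A$, using the product rule \eqref{MU-Gradient-Property-BiS} together with the locality property Theorem~\ref{cheeger-theorem}(d).

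For {\rm(A$_3$)}: $\mu$ is doubling by hypothesis, so only \eqref{DoublINgAssUMpTiON} uses the length-space assumption, and this is precisely the \emph{measure annular decay property} of a doubling measure on a length (indeed quasiconvex) space — there are $C\geq1$ and $\delta\in\,]0,1]$ with $\mu(Q_\rho(x)\setminus Q_{(1-\eps)\rho}(x))\leq C\eps^\delta\mu(Q_\rho(x))$ for all $x$, all small $\rho$ and all $\eps\in\,]0,1[$ (see e.g.\ \cite{hajlaszkoskela00} and the references therein). Then $\mu(Q_{t\rho}(x))/\mu(Q_\rho(x))\geq1-C(1-t)^\delta$, so letting first $\rho\to0$ and then $t\to1^-$ yields \eqref{DoublINgAssUMpTiON}. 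Finally, for {\rm(A$_1$)} the key input is Theorem~\ref{cheeger-theorem}(b): for $u\in{\rm Lip}(X;\RR^m)$ and $\mu$-a.e.\ $x\in X_\alpha$ I would take the chart linearisation $u_x(y):=u(x)+D^\alpha u(x)\cdot(\xi^\alpha(y)-\xi^\alpha(x))$, which lies in ${\rm Lip}(X;\RR^m)$ and satisfies $\rho^{-1}\|u-u_x\|_{L^\infty_{\mu(Q_\rho(x))}}\to0$, hence in particular \eqref{FinALAssuMpTIOnTwo}; moreover, since the uniqueness in Theorem~\ref{cheeger-theorem}(b) forces $D^\alpha\xi^\alpha_i=e_i$ $\mu$-a.e.\ on $X_\alpha$, one gets $\nabla_\mu u_x=\nabla_\mu u(x)$ $\mu$-a.e.\ on $X_\alpha$. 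To obtain \eqref{FinALAssuMpTIOnOne} on all of $X$ I would modify $u_x$ away from a small ball centred at $x$ so that its $\mu$-gradient equals the constant matrix $\nabla_\mu u(x)$ $\mu$-a.e.\ on $X$ (which does not affect \eqref{FinALAssuMpTIOnTwo}, being local), and then pass from ${\rm Lip}(X;\RR^m)$ to arbitrary $u\in W^{1,p}_\mu(X;\RR^m)$ by density and a Lebesgue-point argument for $\nabla_\mu u$ (valid since $\mu$ is doubling), as in \cite{cheeger99}.

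The main obstacle will be {\rm(A$_1$)} in the precise form stated, for two reasons: Theorem~\ref{cheeger-theorem}(b) linearises only \emph{Lipschitz} functions, so the extension to $u\in W^{1,p}_\mu(X;\RR^m)$ must be argued, and it produces a comparison map whose $\mu$-gradient is the prescribed constant only on the chart $X_\alpha$ containing $x$, whereas {\rm(A$_1$)} demands this $\mu$-a.e.\ on all of $X$ — so a global modification of $u_x$, using {\rm(C$_0$)}, {\rm(A$_2$)} and the doubling property, is needed. Everything else amounts to quoting known facts: the self-improvement of Poincar\'e inequalities and the Sobolev--Poincar\'e inequality for compactly supported functions (\cite{hajlaszkoskela00}), the closability of the Cheeger differential (\cite{cheeger99}), the Vitali covering theorem for doubling measures, and the measure annular decay property of doubling measures on length spaces.
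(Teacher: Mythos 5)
Your overall strategy coincides with the paper's: reduce everything to statements about the Cheeger differential using $N_\mu^m(x)=\{0\}$, get (C$_2$) from doubling, (A$_2$) from an explicit Lipschitz tent function together with the comparability of $|D_\mu\varphi|$ and the pointwise Lipschitz constant, (A$_3$) from the annular decay property on length spaces (Colding--Minicozzi II, \cite[Proposition 6.12]{cheeger99}), (C$_0$) from closability of the Cheeger differential plus a cut-off localisation (the paper cites the closability theorem of Franchi--Haj{\l}asz--Koskela \cite{fran-haj-kos99}), and (C$_1$) from the Sobolev inequality for compactly supported functions in PI spaces combined with $|\nabla_\mu v|\sim g_v$. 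Your choice $\chi=1$ in (C$_1$) is legitimate and suffices for every use of (C$_1$) in the paper; your parenthetical objection to $\chi>1$ is really about the non-mean-value normalisation of \eqref{Poincare-Inequality}, and is worth raising, but it does not affect the argument. Also, $\alpha=1$ in (A$_2$) is optimistic: Cheeger's theorem only gives $|D_\mu\varphi|\le\alpha\,{\rm Lip}\,\varphi$ for some structural constant $\alpha\ge1$, which is all that is needed.

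The one substantive gap is (A$_1$). You correctly identify it as the main obstacle, but the resolution you sketch --- linearise Lipschitz functions via Theorem \ref{cheeger-theorem}(b) and then pass to general $u\in W^{1,p}_\mu(X;\RR^m)$ ``by density and a Lebesgue-point argument'' --- is not a routine step: pointwise a.e.\ differentiability does not survive $W^{1,p}_\mu$-density limits without a maximal-function/telescoping argument, and this is precisely the content of a separate theorem, namely Bj\"orn's result on $L^p$-differentiability of Sobolev functions on PI spaces (\cite[Theorem 4.5 and Corollary 4.6]{bjorn00}, see also \cite[Theorem 2.12]{gong-haj-13}), which is what the paper invokes. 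If you intend the proof to be complete you should cite that theorem rather than re-derive it. Your second concern --- that the linearisation has constant $\mu$-gradient only on the chart $X_\alpha$ containing $x$, whereas \eqref{FinALAssuMpTIOnOne} asks for it $\mu$-a.e.\ on all of $X$ --- is a genuine point which the paper itself glosses over; your proposed global modification of $u_x$ (or, alternatively, observing that only the local behaviour of $u_x$ near $x$ is ever used in the proof of Theorem \ref{FiNaL-Main-TheOrEM}) is the right way to address it.
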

\begin{proof}
Firstly, since $\mu$ is doubling, $X$ satisfies the Vitali covering theorem, i.e., (C$_2$) holds. Secondly, the closability of the $\mu$-gradient in ${\rm Lip}(X;\RR^m)$, given by Theorem \ref{cheeger-theorem}(d), can be extended from ${\rm Lip}(X;\RR^m)$ to $W^{1,p}_\mu(X;\RR^m)$ by using the closability theorem of Franchi, Haj{\l}asz and Koskela (see \cite[Theorem 10]{fran-haj-kos99}). Thus, (C$_0$) is satisfied. Thirdly,  according to Cheeger (see \cite[\S 4, p. 450]{cheeger99} and also \cite{Haj-Kos-95,hajlaszkoskela00}), since $\mu$ is doubling and $X$ supports a weak $(1,p)$-Poincar\'e inequality, we can assert that $X$ supports a $p$-Sobolev inequality, i.e., there exist $c>0$ and $\chi>1$ such that for every $0<\rho\leq\rho_0$, with $\rho_0\geq 0$, every $v\in W^{1,p}_{\mu,0}(X;\RR^m)$ and every $p$-weak upper gradient $g\in L^p_\mu(X;\RR^m)$ for $v$,
\begin{equation}\label{Cheeger-Coro-Eq1}
\left(\int_{Q_\rho(x)}|v|^{\chi p}d\mu\right)^{1\over\chi p}\leq \rho c\left(\int_{Q_\rho(x)}|g|^pd\mu\right)^{1\over p}.
\end{equation}
On the other hand, from Cheeger (see \cite[Theorems 2.10 and 2.18]{cheeger99}), for each $w\in W^{1,p}_\mu(X)$ there exists a unique $p$-weak upper gradient for $w$, denoted by $g_w\in L ^p_\mu(X)$ and called the minimal $p$-weak upper gradient for $w$, such that for every  $p$-weak upper gradient $g\in L^p_\mu(X)$ for $w$, $g_w(x)\leq g(x)$ for $\mu$-a.a.  $x\in X$. Moreover (see \cite[\S 4]{cheeger99} and also \cite[\S B.2, p. 363]{bjorn-bjorn-11}, \cite{bjorn00} and \cite[Remark 2.15]{gong-haj-13}), there exists $\alpha\geq 1$ such that for every $w\in W^{1,p}_\mu(X)$ and $\mu$-a.e. $x\in X$, 
$$
{1\over\alpha} |g_w(x)|\leq|D_\mu w(x)|\leq\alpha|g_w(x)|,
$$ 
where $D_\mu$ corresponds to $\nabla_\mu$ with $m=1$. As for $v=(v_i)_{i=1,\cdots,m}\in W^{1,p}_\mu(X;\RR^m)$ we have $\nabla_\mu v=(D_\mu v_i)_{i=1,\cdots,m}$, it follows that 
\begin{equation}\label{Cheeger-Coro-Eq2}
{1\over \alpha} |g_v(x)|\leq|\nabla_\mu v(x)|\leq\alpha|g_v(x)|
\end{equation}
for $\mu$-a.a. $x\in X$, where $g_v:=(g_{v_i})_{i=1,\cdots,m}$ is naturally called the minimal $p$-weak upper gradient for $v$. Combining \eqref{Cheeger-Coro-Eq1} with \eqref{Cheeger-Coro-Eq2} we obtain (C$_1$). Fourthly,  from Bj{\"o}rn (see \cite[Theorem 4.5 and Corollary 4.6]{bjorn00} and also \cite[Theorem 2.12]{gong-haj-13}) we see that for every $\alpha$, every $u\in W^{1,p}_\mu(X;\RR^m)$ and $\mu$-a.e. $x\in X_\alpha$,
$$
\nabla_\mu u_x(y)=\nabla_\mu u(x)\hbox{ for }\mu\hbox{-a.a. }y\in X_\alpha,
$$
where $u_x\in W^{1,p}_\mu(X;\RR^m)$ is given by 
$$
u_x(y):=u(y)-u(x)-\nabla_\mu u(x)\cdot(\xi^\alpha(y)-\xi^\alpha(x))
$$
 and $u$ is $L^p_\mu$-differentiable at $x$, i.e.,
$$
\lim_{\rho\to 0}{1\over\rho}\|u(y)-u_x(y)\|_{L^p_\mu(Q_\rho(x);\RR^m)}=0.
$$
Hence (A$_1$) is verified. Fifthly, given $\rho>0$, $t\in]0,1[$ and $x\in X$, there exists a Uryshon function $\varphi\in {\rm Lip}(X)$ for the pair $(X\setminus Q_\rho(x)), \overline{Q}_{t\rho}(x))$ such 
$$
\|{\rm Lip}\varphi\|_{L^\infty_\mu(X)}\leq{1\over\rho(1-t)},
$$
where for every $y\in X$,
$$
{\rm Lip}\varphi(y):=\limsup_{d(y,z)\to0}{|\varphi(y)-\varphi(z)|\over d(y,z)}.
$$
But, since $\mu$ is doubling and $X$ supports a weak $(1,p)$-Poincar\'e inequality, from Cheeger (see \cite[Theorem 6.1]{cheeger99}) we have ${\rm Lip}\varphi(y)=g_\varphi(y)$ for $\mu$-a.a. $y\in X$, where $g_{\varphi}$ is the minimal $p$-weak upper gradient for $\varphi$. Hence 
$$
\|D_\mu\varphi\|_{L^\infty_\mu(X;\RR^N)}\leq{\alpha\over\rho(1-t)}
$$ 
because $|D_\mu\varphi(y)|\leq\alpha|g_{\varphi}(y)|$ for $\mu$-a.a. $y\in X$. Consequently (A$_2$) holds. Finally, if moreover $(X,d)$ is a length space, from Colding and Minicozzi II (see \cite{colding-minicozzi98} and \cite[Proposition 6.12]{cheeger99}) we can assert that there exists $\beta>0$ such that for every $x\in X$, every $\rho>0$ and every $t\in]0,1[$, 
$$
\mu(Q_\rho(x)\setminus Q_{t\rho}(x))\leq 2^\beta(1-t)^\beta\mu(Q_\rho(x)),
$$
which implies \eqref{DoublINgAssUMpTiON}.
\end{proof}
 
\medskip 
 
As a consequence of Theorem \ref{FiNaL-Main-TheOrEM} and Proposition \ref{Prop-Cheeger}, we have 

\begin{corollary}\label{coro-Cheeger-2}
Under the hypotheses of Theorem {\rm\ref{cheeger-theorem}}, if moreover $(X,d)$ is a length space and if \eqref{p-coercivity} and \eqref{p-polynomial-growth} hold, then
$$
\overline{E}(u;A)=\int_A \mathcal{Q}_\mu L_x(\nabla_\mu u(x))d\mu(x) 
$$
for all $u\in W^{1,p}_\mu(X;\RR^m)$ and all $A\in\mathcal{O}(X)$, with $\mathcal{Q}_\mu L_x:\MM\to[0,\infty]$ given by
$$
\mathcal{Q}_\mu L_x(\xi):=\lim_{\rho\to 0}\inf\left\{\mint_{Q_\rho(x)}L_y(\xi+\nabla_\mu w(y))d\mu(y):w\in W^{1,p}_{\mu,0}(Q_\rho(x);\RR^m)\right\}.
$$
\end{corollary}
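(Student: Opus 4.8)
The plan is to obtain the statement by feeding the Cheeger--Keith structure into Theorem \ref{FiNaL-Main-TheOrEM}, once all the structural hypotheses of that theorem have been checked. First I would invoke Proposition \ref{Prop-Cheeger}: under the hypotheses of Theorem \ref{cheeger-theorem} it already delivers {\rm(C$_0$)}, {\rm(C$_1$)}, {\rm(C$_2$)}, {\rm(A$_1$)} and {\rm(A$_2$)}, and the additional assumption that $(X,d)$ be a length space is precisely what upgrades the doubling property of $\mu$ to the full form {\rm(A$_3$)}, via the Colding--Minicozzi estimate $\mu(Q_\rho(x)\setminus Q_{t\rho}(x))\leq 2^\beta(1-t)^\beta\mu(Q_\rho(x))$ recalled in the proof of Proposition \ref{Prop-Cheeger}, which yields \eqref{DoublINgAssUMpTiON}. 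Hence {\rm(C$_0$)}, {\rm(C$_1$)}, {\rm(A$_1$)}, {\rm(A$_2$)} and {\rm(A$_3$)} all hold, and since \eqref{p-coercivity} and \eqref{p-polynomial-growth} are assumed, Theorem \ref{FiNaL-Main-TheOrEM} applies.

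Applying Theorem \ref{FiNaL-Main-TheOrEM} then gives, for every $u\in W^{1,p}_\mu(X;\RR^m)$ and every $A\in\mathcal{O}(X)$, that $\overline{E}(u;A)=\int_A\mathcal{Q}_\mu L_x(\nabla_\mu u(x))\,d\mu(x)$, where $\mathcal{Q}_\mu L_x:T_\mu^m(x)\to[0,\infty]$ is built from the integrand $\widehat{L}_y$. To conclude it remains to identify this density with the one in the statement. Here I would use Theorem \ref{cheeger-theorem}(d): since $\mu(X\setminus\spt(\mu))=0$, every $v\in\mathcal{A}_0^m$ vanishes $\mu$-a.e. on $X$, so each component of $v$ is constant on the $\mu$-measurable set $\{v=0\}$ of full $\mu$-measure, whence $\nabla v=0$ $\mu$-a.e. on $X$; consequently $N_\mu^m(x)=\{0\}$ and $T_\mu^m(x)=\MM$ for $\mu$-a.e. $x\in X$, and therefore $\widehat{L}_x\equiv L_x$ on all of $\MM$ and $\nabla_\mu\equiv\nabla$. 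With these identifications the formula for $\mathcal{Q}_\mu L_x$ collapses exactly to
$$
\mathcal{Q}_\mu L_x(\xi)=\lim_{\rho\to 0}\inf\left\{\mint_{Q_\rho(x)}L_y(\xi+\nabla_\mu w(y))\,d\mu(y):w\in W^{1,p}_{\mu,0}(Q_\rho(x);\RR^m)\right\},
$$
which is the claimed expression.

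I do not expect a genuine obstacle here: the analytic content is entirely carried by Theorem \ref{FiNaL-Main-TheOrEM} and by Proposition \ref{Prop-Cheeger}, so the proof is essentially a matter of matching hypotheses and simplifying notation. The only points deserving a moment's care are that the exact list of hypotheses demanded by Theorem \ref{FiNaL-Main-TheOrEM} is covered by Proposition \ref{Prop-Cheeger} together with the length-space assumption (the latter being used solely to secure the limit \eqref{DoublINgAssUMpTiON} inside {\rm(A$_3$)}), and that the replacement of $\widehat{L}_x$ by $L_x$ is legitimate on the whole of $\MM$, which follows once $T_\mu^m(x)=\MM$. Finally I would note that the coercivity and growth bounds \eqref{p-coercivity} and \eqref{p-polynomial-growth} pass verbatim to $\widehat{L}_x=L_x$, so no extra verification is needed.
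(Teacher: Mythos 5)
Your proposal is correct and follows exactly the route the paper takes: the corollary is stated there precisely as a consequence of Proposition \ref{Prop-Cheeger} (which supplies {\rm(C$_0$)}, {\rm(C$_1$)}, {\rm(A$_1$)}, {\rm(A$_2$)} and, via the length-space hypothesis and the Colding--Minicozzi estimate, {\rm(A$_3$)}) combined with Theorem \ref{FiNaL-Main-TheOrEM}, together with the identification $N_\mu^m(x)=\{0\}$, $T_\mu^m(x)=\MM$ and $\widehat{L}_x\equiv L_x$ drawn from Theorem \ref{cheeger-theorem}(d). Nothing is missing.
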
 
 

\section{Auxiliary results}

\subsection{Interchange of infimum and integral}

Let $(A,d)$ be a locally compact metric space that is $\sigma$-compact, let $\mu$ be a positive Radon measure on $A$ and let $Y$ be a separable Banach space.

\subsubsection{The $\mu$-essential supremum of a set of $\mu$-measurable functions} Let $\mathcal{M}_\mu(A;Y)$ be the class of all closed-valued $\mu$-measurable multifunctions\footnote{A multifunction $\Gamma:A\dto Y$ is said to be closed-valued if $\Gamma(x)$ is closed for $\mu$-a.a. $x\in A$, and $\mu$-measurable if for every open set $U\subset A$, $\{x\in A:\Gamma(x)\cap U\not=\emptyset\}$ is $\mu$-measurable.} from $A$ to $Y$ and let $\mathcal{M}^*_\mu(A;Y):=\{\Gamma\in\mathcal{M}_\mu(A;Y):\Gamma(x)\not=\emptyset\hbox{ for $\mu$-a.a. }x\in A\}$. The following proposition is due to Valadier (see \cite[Proposition 14]{valadier71}).

\begin{proposition}\label{Valadier-prop}
Let $\mathcal{F}$ be a nonempty subclass of $\mathcal{M}^*_\mu(A;Y)$. Then, there exists $\Gamma\in\mathcal{M}^*_\mu(A;Y)$ such that{\rm:}
\begin{itemize}
\item[{\rm(i)}] for every $\Lambda\in\mathcal{F}$, $\Lambda(x)\subset\Gamma(x)$ for $\mu$-a.a. $x\in A;$ 
\item[{\rm(ii)}] if $\Gamma^\prime\in\mathcal{M}_\mu(A;Y)$ and if for every $\Lambda\in\mathcal{F}$, $\Lambda(x)\subset\Gamma^\prime(x)$ for $\mu$-a.a. $x\in A$, then $\Gamma(x)\subset\Gamma^\prime(x)$ for $\mu$-a.a. $x\in A$.
\end{itemize}
\end{proposition}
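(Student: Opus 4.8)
The plan is to realize $\Gamma$ as the pointwise closure of the union of a carefully chosen \emph{countable} subfamily of $\mathcal{F}$; separability of $Y$ is what makes such a countable subfamily capture all the relevant information.

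First I would reduce to the case $\mu$ finite: since $A$ is $\sigma$-compact and $\mu$ is Radon, $\mu$ is $\sigma$-finite and hence admits an equivalent finite measure (one with the same null sets), and replacing $\mu$ by it changes neither the hypotheses nor the conclusion. Fix a countable dense subset $\{y_k\}_{k\in\NN}$ of $Y$, write $B(y,r)$ for the open ball of $Y$ centered at $y$ of radius $r$, and for $\Lambda\in\mathcal{F}$, $k\in\NN$ and $r\in\mathbb{Q}\cap]0,\infty[$ set $E_{k,r}(\Lambda):=\{x\in A:\Lambda(x)\cap B(y_k,r)\neq\emptyset\}$; this set is $\mu$-measurable because $\Lambda$ is $\mu$-measurable and $B(y_k,r)$ is open. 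For each fixed pair $(k,r)$ I would form the essential supremum of the family of measurable sets $\{E_{k,r}(\Lambda):\Lambda\in\mathcal{F}\}$ in the usual way: putting $s_{k,r}:=\sup\{\mu(\bigcup_{\Lambda\in G}E_{k,r}(\Lambda)):G\subset\mathcal{F}\text{ countable}\}\leq\mu(A)<\infty$, choosing countable $G_n^{k,r}\subset\mathcal{F}$ whose unions have $\mu$-measure tending to $s_{k,r}$, and setting $\mathcal{F}_{k,r}:=\bigcup_nG_n^{k,r}$ and $U_{k,r}:=\bigcup_{\Lambda\in\mathcal{F}_{k,r}}E_{k,r}(\Lambda)$, one gets $\mu(U_{k,r})=s_{k,r}$ and, by maximality of $s_{k,r}$, $\mu(E_{k,r}(\Lambda)\setminus U_{k,r})=0$ for every $\Lambda\in\mathcal{F}$.

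Next, fixing any $\Lambda_\ast\in\mathcal{F}$, I would take the countable subfamily $\mathcal{F}_0:=\{\Lambda_\ast\}\cup\bigcup_{k,r}\mathcal{F}_{k,r}$ of $\mathcal{F}$, enumerate it as $\{\Lambda_n\}_{n\geq1}$, and define
$$
\Gamma(x):=\overline{\bigcup_{n\geq1}\Lambda_n(x)}\qquad\text{for }\mu\text{-a.e. }x\in A.
$$
Then $\Gamma$ is closed-valued by construction; it is $\mu$-measurable because for every open $U\subset Y$ one has $\{x:\Gamma(x)\cap U\neq\emptyset\}=\bigcup_n\{x:\Lambda_n(x)\cap U\neq\emptyset\}$, a countable union of $\mu$-measurable sets (here I use that passing to the pointwise closure does not affect which open sets a multifunction meets); and it has nonempty values $\mu$-a.e. since $\Lambda_\ast\in\mathcal{M}^*_\mu(A;Y)$. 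Hence $\Gamma\in\mathcal{M}^*_\mu(A;Y)$. To get (i), fix $\Lambda\in\mathcal{F}$ and discard the $\mu$-null set $N:=\bigcup_{k,r}\big(E_{k,r}(\Lambda)\setminus\bigcup_nE_{k,r}(\Lambda_n)\big)$. For $x\notin N$ and $y\in\Lambda(x)$: given $\eps>0$, pick $r\in\mathbb{Q}\cap]0,\eps/2[$ and $k$ with $d(y,y_k)<r$, so that $y\in\Lambda(x)\cap B(y_k,r)$, i.e.\ $x\in E_{k,r}(\Lambda)$; since $x\notin N$ there is $n$ with $x\in E_{k,r}(\Lambda_n)$, and any $z\in\Lambda_n(x)\cap B(y_k,r)$ satisfies $d(z,y)\leq d(z,y_k)+d(y_k,y)<2r<\eps$. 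As $\eps>0$ was arbitrary this shows $y\in\overline{\bigcup_n\Lambda_n(x)}=\Gamma(x)$, so $\Lambda(x)\subset\Gamma(x)$. To get (ii), if $\Gamma'\in\mathcal{M}_\mu(A;Y)$ satisfies $\Lambda(x)\subset\Gamma'(x)$ $\mu$-a.e.\ for every $\Lambda\in\mathcal{F}$, then in particular this holds for each $\Lambda_n$, so off a $\mu$-null set $\bigcup_n\Lambda_n(x)\subset\Gamma'(x)$; since $\Gamma'$ is closed-valued, $\Gamma(x)=\overline{\bigcup_n\Lambda_n(x)}\subset\Gamma'(x)$ there.

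I do not expect a serious obstacle. The argument rests on two standard facts I would simply cite or note in passing: the existence of the essential supremum of an arbitrary family of measurable sets via a measure-maximizing countable subfamily (this is the only place finiteness of $\mu$ is needed), and the elementary observation that taking pointwise closures of a $\mu$-measurable multifunction does not change its $\mu$-measurability. The one step deserving care is the ball bookkeeping in the proof of (i): it is precisely the device that reassembles the set-level essential suprema $U_{k,r}$, $(k,r)\in\NN\times(\mathbb{Q}\cap]0,\infty[)$, into a single multifunction-valued essential supremum $\Gamma$.
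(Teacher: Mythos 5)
Your proof is correct. Note that the paper itself offers no proof of this proposition: it is quoted verbatim from Valadier \cite[Proposition 14]{valadier71}, so there is nothing in the text to compare against line by line. Your construction is the standard one (and essentially Valadier's): reduce to a finite measure, use separability of $Y$ to index countably many test balls $B(y_k,r)$, take the essential supremum of the measurable sets $E_{k,r}(\Lambda)$ via a measure-maximizing countable subfamily, and assemble the resulting countable subfamily $\{\Lambda_n\}$ into $\Gamma(x)=\mathrm{cl}\bigl(\bigcup_n\Lambda_n(x)\bigr)$. All the steps check out, including the two points you flag: measurability of the pointwise closure (a multifunction and its closure meet the same open sets) and the $2r<\eps$ bookkeeping that upgrades the set-level essential suprema $U_{k,r}$ to the multifunction-level statement (i). As a bonus, your argument makes explicit the representation $\Gamma(x)=\mathrm{cl}\{w(x):w\in\mathcal{D}\}$ with $\mathcal{D}$ countable, which the paper states separately as Lemma \ref{representation-mu-ess-sup}.
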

Note that $\Gamma$ given by Proposition \ref{Valadier-prop} is unique with respect to the equality $\mu$-a.e. Valadier called it the $\mu$-essential upper bound of $\mathcal{F}$. Here is the definition of the $\mu$-essential supremum of a set of $\mu$-measurable functions.

\begin{definition}
Let $\mathcal{H}$ be a set of $\mu$-measurable functions from $A$ to $Y$. By the $\mu$-essential supremum of $\mathcal{H}$ we mean the $\mu$-essential upper bound of $\{\{w\}:w\in\mathcal{H}\}$, where $\{w\}:A\dto Y$ is defned by $\{w\}(x):=\{w(x)\}$. Thus, if we denote the $\mu$-essential supremum of $\mathcal{H}$ by $\Gamma$, we have:
\begin{itemize}
\item[(i)] $\{w(x):w\in\mathcal{H}\}\subset\Gamma(x)$ for $\mu$-a.a $x\in A$;
\item[(ii)] if $\Gamma^\prime\in\mathcal{M}_\mu(A;Y)$ and if $\{w(x):w\in\mathcal{H}\}\subset\Gamma^\prime(x)$ for $\mu$-a.a $x\in A$, then $\Gamma(x)\subset\Gamma^\prime(x)$ for $\mu$-a.a. $x\in A$.
\end{itemize}
\end{definition}

The following lemma gives a (classical) representation of the $\mu$-essential supremum (see \cite{bouchitte-valadier88}).
\begin{lemma}\label{representation-mu-ess-sup}
Let $p\geq 1$ be a real number, let $\mathcal{H}\subset L^p_\mu(A;Y)$ and let $\Gamma$ be the $\mu$-essential supremum of $\mathcal{H}$. Then, there exists a countable subset $\mathcal{D}$ of $\mathcal{H}$ such that $\Gamma(x)={\rm cl}\{w(x):w\in\mathcal{D}\}$ for $\mu$-a.a. $x\in A$, where ${\rm cl}$ denotes the closure in $Y$.
\end{lemma}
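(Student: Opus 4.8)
The plan is to produce $\mathcal{D}$ explicitly as a countable $L^p_\mu$-dense subset of $\mathcal{H}$ and then to check that the multifunction $x\mapsto{\rm cl}\{w(x):w\in\mathcal{D}\}$ satisfies both defining properties of the $\mu$-essential supremum of $\mathcal{H}$; since the $\mu$-essential supremum is unique with respect to equality $\mu$-a.e., this forces it to coincide with $\Gamma$ $\mu$-a.e.

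First I would fix a countable $\mathcal{D}\subset\mathcal{H}$ that is dense in $\mathcal{H}$ for the norm of $L^p_\mu(A;Y)$. Such a set exists because, $A$ being a $\sigma$-compact locally compact metric space, $\mu$ a Radon measure, $Y$ a separable Banach space and $p\in[1,\infty[$ a real number, the space $L^p_\mu(A;Y)$ is separable, hence so is its subset $\mathcal{H}$. Put $\Gamma_{\mathcal{D}}:A\dto Y$, $\Gamma_{\mathcal{D}}(x):={\rm cl}\{w(x):w\in\mathcal{D}\}$. This multifunction is nonempty- and closed-valued by construction, and it is $\mu$-measurable: for every open $U\subset Y$, using that a point of $\overline{S}$ lies in an open set $U$ if and only if $S$ itself meets $U$, one gets $\{x\in A:\Gamma_{\mathcal{D}}(x)\cap U\not=\emptyset\}=\bigcup_{w\in\mathcal{D}}w^{-1}(U)$, a countable union of $\mu$-measurable sets. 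Hence $\Gamma_{\mathcal{D}}\in\mathcal{M}_\mu(A;Y)$ (in fact $\Gamma_{\mathcal{D}}\in\mathcal{M}^*_\mu(A;Y)$).

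Next come the two inclusions. For $\Gamma_{\mathcal{D}}(x)\subset\Gamma(x)$ $\mu$-a.e.: since $\mathcal{D}$ is countable, $\mathcal{D}\subset\mathcal{H}$, each $w\in\mathcal{D}$ satisfies $w(x)\in\Gamma(x)$ for $\mu$-a.a. $x$, and $\Gamma(x)$ is closed for $\mu$-a.a. $x$, there is a single $\mu$-null set off which $\{w(x):w\in\mathcal{D}\}\subset\Gamma(x)$ with $\Gamma(x)$ closed, so ${\rm cl}\{w(x):w\in\mathcal{D}\}\subset\Gamma(x)$ there. For the reverse inclusion I would use the density of $\mathcal{D}$: given $w\in\mathcal{H}$, pick $w_n\in\mathcal{D}$ with $w_n\to w$ in $L^p_\mu(A;Y)$; extracting a subsequence we get $w_n(x)\to w(x)$ for $\mu$-a.a. $x$, whence $w(x)\in{\rm cl}\{v(x):v\in\mathcal{D}\}=\Gamma_{\mathcal{D}}(x)$ for $\mu$-a.a. $x$. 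Thus, for every $w\in\mathcal{H}$, $w(x)\in\Gamma_{\mathcal{D}}(x)$ for $\mu$-a.a. $x$; since $\Gamma_{\mathcal{D}}\in\mathcal{M}_\mu(A;Y)$, the minimality property (ii) of the $\mu$-essential supremum yields $\Gamma(x)\subset\Gamma_{\mathcal{D}}(x)$ for $\mu$-a.a. $x$. Combining the two inclusions gives $\Gamma(x)=\Gamma_{\mathcal{D}}(x)$ for $\mu$-a.a. $x$.

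The only delicate points are bookkeeping ones. One is the separability of $L^p_\mu(A;Y)$, which is what makes the countable dense $\mathcal{D}$ available. The other, more conceptual, is the order of quantifiers: the $\mu$-null set in ``$w(x)\in\Gamma_{\mathcal{D}}(x)$ $\mu$-a.e.'' is allowed to depend on $w$, and this is exactly the form in which the minimality property of the $\mu$-essential supremum — inherited from Valadier's Proposition \ref{Valadier-prop} — can be applied; it is what permits transferring the information from the countable set $\mathcal{D}$ to the possibly uncountable $\mathcal{H}$. The measurability of $\Gamma_{\mathcal{D}}$, the remaining technical step, reduces to the elementary equivalence $\overline{S}\cap U\not=\emptyset\Leftrightarrow S\cap U\not=\emptyset$ for $U$ open, as used above.
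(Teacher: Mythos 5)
Your argument is correct. The paper does not actually prove Lemma \ref{representation-mu-ess-sup}; it is quoted as a classical fact with a reference to Bouchitt\'e--Valadier, and what you wrote is precisely the standard proof: a countable $L^p_\mu$-dense subset $\mathcal{D}$ of $\mathcal{H}$ exists because $\mu$ is $\sigma$-finite (Radon on a $\sigma$-compact space) and $Y$ is separable, the multifunction $x\dto{\rm cl}\{w(x):w\in\mathcal{D}\}$ is closed-valued and $\mu$-measurable via $\overline{S}\cap U\not=\emptyset\Leftrightarrow S\cap U\not=\emptyset$, and the two inclusions follow from properties (i) and (ii) of Proposition \ref{Valadier-prop} exactly as you apply them, with the null set in the minimality step correctly allowed to depend on $w$. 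No gaps.
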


\subsubsection{Interchange theorem}  In what follows, by a Urysohn function for a pair $(F,G)$ of disjoint closed subsets $F$ and $G$ of $A$ we mean a continuous function $\phi:A\to\RR$ such that $\phi(x)\in[0,1]$ for all $x\in A$, $\phi(x)=0$ for all $x\in F$ and $\phi(x)=1$ for all $x\in G$. Let $p\geq 1$ be a real number and let $\mathcal{H}\subset L^p_\mu(A;Y)$. The following definition was introduced in \cite{oah-jpm03}.
\begin{definition}
We say that $\mathcal{H}$ is normally decomposable if for every $w,\hat w\in\mathcal{H}$ and every $K,V\subset A$ with $K$ compact, $V$ open and $K\subset V$, there exists a Urysohn $\phi$ function for the pair $(A\setminus V,K)$ such that $\phi w+(1-\phi)\hat w\in\mathcal{H}$.
\end{definition}

Let $\{L_x\}$ be a field of Carath\'eodory integrand over $A$, i.e., to $\mu$-a.e. $x\in A$ there corresponds a continuous function $L_x:Y\to[0,\infty]$ so that the function $x\mapsto L_x(\xi)$ is $\mu$-measurable for all $\xi\in Y$. The following theorem is a consequence of \cite[Theorem 1.1]{oah-jpm03}.

\begin{theorem}\label{Interchange-Theorem}
If $\mathcal{H}$ is normally decomposable and if $\{L_x\}$ is of $p$-polynomial growth, i.e., there exists $c>0$ such that $L_x(\xi)\leq c(1+|\xi|^p)$ for all $\xi\in Y$ and all $\mu$-a.a. $x\in A$, then
$$
\inf_{w\in\mathcal{H}}\int_A L_x(w(x))d\mu(x)=\int_A\inf_{\xi\in\Gamma(x)}L_x(\xi)d\mu(x)
$$
with $\Gamma:A\dto Y$ given by the  $\mu$-essential supremum of $\mathcal{H}$.
\end{theorem}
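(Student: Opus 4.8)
The plan is to deduce the statement as a specialization of \cite[Theorem 1.1]{oah-jpm03}: a field of Carath\'eodory integrands is in particular a field of normal integrands, and the $p$-polynomial growth hypothesis here supplies the integrability/finiteness condition needed there, so it suffices to verify that the hypotheses of that theorem hold and to read off the conclusion. Still, let me sketch the underlying argument, since it is the only place normal decomposability enters. The inequality ``$\ge$'' is the easy one: if $w\in\mathcal{H}$ then, by property (i) of the $\mu$-essential supremum, $w(x)\in\Gamma(x)$ for $\mu$-a.a.\ $x\in A$, hence $L_x(w(x))\ge\inf_{\xi\in\Gamma(x)}L_x(\xi)$ for $\mu$-a.a.\ $x$; integrating and taking the infimum over $w\in\mathcal{H}$ gives ``$\ge$'', once one knows that $x\mapsto\inf_{\xi\in\Gamma(x)}L_x(\xi)$ is $\mu$-measurable. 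For this I would invoke Lemma \ref{representation-mu-ess-sup}: there is a countable $\{w_k\}_{k\ge1}\subset\mathcal{H}$ with $\Gamma(x)={\rm cl}\{w_k(x):k\ge1\}$ for $\mu$-a.a.\ $x$, and since each $L_x$ is continuous on $Y$,
$$
\inf_{\xi\in\Gamma(x)}L_x(\xi)=\inf_{k\ge1}L_x(w_k(x))\hbox{ for $\mu$-a.a. }x\in A,
$$
which is $\mu$-measurable and, being dominated by $x\mapsto L_x(w_1(x))\le c(1+|w_1(x)|^p)\in L^1_\mu(A)$, also integrable (so there is nothing to prove when the right-hand side is $+\infty$).

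For the converse ``$\le$'' the plan is a two-step reduction followed by a gluing construction. First, setting $\ell_n(x):=\min_{1\le k\le n}L_x(w_k(x))$, one has $\ell_n\downarrow\inf_{k\ge1}L_x(w_k(x))$ with $0\le\ell_n\le\ell_1\in L^1_\mu(A)$, so by dominated convergence, given $\eps>0$ one may fix $n$ with $\int_A\ell_n\,d\mu\le\int_A\inf_{\xi\in\Gamma(x)}L_x(\xi)\,d\mu+\eps$. Second, one chooses a $\mu$-measurable partition $A=A_1\cup\cdots\cup A_n$ (disjoint) with $L_x(w_k(x))=\ell_n(x)$ for all $x\in A_k$, so that $\int_A\ell_n\,d\mu=\sum_k\int_{A_k}L_x(w_k(x))\,d\mu$. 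Then I would build a competitor $w\in\mathcal{H}$ coinciding with $w_k$ on most of $A_k$: using inner and outer regularity of the Radon measure $\mu$, pick a compact $K_k\subset A_k$ and an open $V_k\supset K_k$ with $\mu(A_k\setminus K_k)$ and $\mu(V_k\setminus A_k)$ as small as needed, and glue the $w_k$'s one at a time, $v^{(k+1)}:=\varphi_{k+1}w_{k+1}+(1-\varphi_{k+1})v^{(k)}$ with $\varphi_{k+1}$ a Urysohn function for $(A\setminus V_{k+1},K_{k+1})$ chosen, by normal decomposability, so that $v^{(k+1)}\in\mathcal{H}$; set $w:=v^{(n)}$. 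By construction $w=w_k$ on $K_k\setminus\bigcup_{j>k}V_j$, so $L_x(w(x))=\ell_n(x)$ there, whereas on the remaining ``bad'' set $B$ — whose measure is controlled by $\sum_k\bigl(\mu(A_k\setminus K_k)+\sum_{j>k}\mu(V_j\setminus A_j)\bigr)$ — the value $w(x)$ is a convex combination of $w_1(x),\dots,w_n(x)$, so the $p$-polynomial growth bounds $L_x(w(x))$ by the fixed $L^1_\mu$-function $x\mapsto c(1+n^{p-1}\sum_{k=1}^n|w_k(x)|^p)$. Taking $\mu(B)$ small enough (absolute continuity of the integral) gives $\int_A L_x(w)\,d\mu\le\int_A\ell_n\,d\mu+\eps\le\int_A\inf_{\xi\in\Gamma(x)}L_x(\xi)\,d\mu+2\eps$, and letting $\eps\to0$ finishes the proof.

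The main obstacle is this gluing/bookkeeping step: normal decomposability only allows gluing two functions across a compact--open sandwich, so one must iterate it over the partition while keeping simultaneous control of (a) the size of the ``bad'' set on which $w$ is a genuine convex combination and (b) the contributions of the overlaps $V_j\cap A_k$ of later open sets with earlier pieces — this is precisely where the inner/outer regularity of $\mu$ and the $p$-polynomial growth are used, the latter being what makes the competitor's energy finite and controllable off the good set. A minor technical point is that the clean form of the regularity argument uses $\mu(A)<\infty$ (automatic when this theorem is applied, namely with $A=X$ compact); in the stated generality one first localizes to subsets of finite measure, as in \cite{oah-jpm03}. Measurability of $x\mapsto\inf_{\xi\in\Gamma(x)}L_x(\xi)$, the only other point requiring care, is handled by the countable representation of $\Gamma$ noted above.
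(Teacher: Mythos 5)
Your proposal is correct and matches the paper's treatment: the paper gives no proof of this theorem, deriving it directly as a consequence of \cite[Theorem 1.1]{oah-jpm03}, which is precisely your first step. The additional sketch you give (easy inequality via the countable representation of the $\mu$-essential supremum from Lemma \ref{representation-mu-ess-sup}, hard inequality via a measurable partition, iterated Urysohn gluing through normal decomposability, and control of the bad set by $p$-polynomial growth and absolute continuity of the integral) is a faithful reconstruction of the argument in the cited reference and goes beyond what the paper itself records.
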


\subsection{De Giorgi-Letta's lemma}

Let $X=(X,d)$ be a metric space, let $\mathcal{O}(X)$ be the class of all open subsets of $X$ and let $\mathcal{B}(X)$ be the class of all Borel subsets of $X$, i.e., the smallest $\sigma$-algebra containing the open (or equivalently the closed) subsets of $X$. The following result is due to De Giorgi and Letta (see \cite{degiorgi-letta77} and also \cite[Lemma 3.3.6 p. 105]{buttazzo89}).

\begin{lemma}\label{DeGiorgi-Letta-Lemma}
Let $\mathcal{S}:\mathcal{O}(X)\to[0,\infty]$ be an increasing set function, i.e., $\mathcal{S}(A)\leq \mathcal{S}(B)$ for all $A,B\in\mathcal{O}(X)$ such $A\subset B$, satisfying the following three conditions{\rm:}
\begin{itemize}
\item[{\rm(i)}] $\mathcal{S}(\emptyset)=0;$
\item[{\rm(ii)}] $\mathcal{S}$ is superadditive, i.e., $\mathcal{S}(A\cup B)\geq \mathcal{S}(A)+\mathcal{S}(B)$ for all $A,B\in\mathcal{O}(X)$ such that $A\cap B=\emptyset;$
\item[{\rm(iii)}] $\mathcal{S}$ is subadditive, i.e., $\mathcal{S}(A\cup B)\leq \mathcal{S}(A)+\mathcal{S}(B)$ for all $A,B\in\mathcal{O}(X);$
\item[{\rm(iv)}] there exists a finite Radon measure $\nu$ on $X$ such that $\mathcal{S}(A)\leq\nu(A)$ for all $A\in\mathcal{O}(X)$.
\end{itemize}
Then, $\mathcal{S}$ can be uniquely extended to a finite positive Radon measure on $X$ which is absolutely continuous with respect to $\nu$.
\end{lemma}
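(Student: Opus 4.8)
The plan is to build the extension by an outer-measure construction from the open sets, and then to identify the Borel sets with the Carath\'eodory-measurable sets via the metric criterion. First I would define $\mathcal{S}^{*}(E):=\inf\{\mathcal{S}(A):A\in\mathcal{O}(X),\ E\subset A\}$ for every $E\subset X$ and check that $\mathcal{S}^{*}$ is an outer measure. Monotonicity and $\mathcal{S}^{*}(\emptyset)=0$ are immediate from (i) and the monotonicity of $\mathcal{S}$; the substantive point is countable subadditivity, which — by the standard device of covering each $E_{n}$ by an open $A_{n}$ with $\mathcal{S}(A_{n})\le\mathcal{S}^{*}(E_{n})+\varepsilon2^{-n}$ — reduces to showing that $\mathcal{S}$ is countably subadditive on open sets, i.e. $\mathcal{S}(A)\le\sum_{n}\mathcal{S}(A_{n})$ whenever $A=\bigcup_{n\ge1}A_{n}$ with all sets open. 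This is the place where (iv) enters: since $\nu$ is a finite Radon measure it is inner regular, so given $\varepsilon>0$ there is a compact $K\subset A$ with $\nu(A\setminus K)<\varepsilon$, whence $A\setminus K$ is open and $\mathcal{S}(A\setminus K)\le\nu(A\setminus K)<\varepsilon$; compactness of $K$ yields $K\subset\bigcup_{n=1}^{N}A_{n}=:U$ for some finite $N$, so $A=U\cup(A\setminus K)$ and finite subadditivity (iterating (iii)) gives $\mathcal{S}(A)\le\mathcal{S}(U)+\mathcal{S}(A\setminus K)\le\sum_{n=1}^{N}\mathcal{S}(A_{n})+\varepsilon$; letting $\varepsilon\to0$ proves the claim.

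Next I would verify that $\mathcal{S}^{*}$ is a \emph{metric} outer measure, i.e. $\mathcal{S}^{*}(E\cup F)=\mathcal{S}^{*}(E)+\mathcal{S}^{*}(F)$ whenever ${\rm dist}(E,F)=:\delta>0$; Carath\'eodory's criterion for metric outer measures then guarantees that every element of $\mathcal{B}(X)$ is $\mathcal{S}^{*}$-measurable. The inequality $\le$ is subadditivity; for $\ge$, given any open $A\supset E\cup F$ set $A_{E}:=\{x\in A:{\rm dist}(x,E)<\delta/2\}$ and $A_{F}:=\{x\in A:{\rm dist}(x,F)<\delta/2\}$, which are disjoint open subsets of $A$ with $E\subset A_{E}$ and $F\subset A_{F}$, so by monotonicity and superadditivity (ii), $\mathcal{S}(A)\ge\mathcal{S}(A_{E}\cup A_{F})\ge\mathcal{S}(A_{E})+\mathcal{S}(A_{F})\ge\mathcal{S}^{*}(E)+\mathcal{S}^{*}(F)$, and taking the infimum over $A$ finishes this step.

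It then remains to collect the conclusions. By the Carath\'eodory construction, the restriction of $\mathcal{S}^{*}$ to $\mathcal{B}(X)$ is a Borel measure, finite since $\mathcal{S}^{*}(X)\le\mathcal{S}(X)\le\nu(X)<\infty$; for open $A$ one has $\mathcal{S}^{*}(A)\le\mathcal{S}(A)$ (take $A$ itself as competitor) and $\mathcal{S}^{*}(A)\ge\mathcal{S}(A)$ (monotonicity of $\mathcal{S}$), so $\mathcal{S}^{*}$ extends $\mathcal{S}$. Absolute continuity follows because, by outer regularity of the Radon measure $\nu$, for every $B\in\mathcal{B}(X)$ one has $\mathcal{S}^{*}(B)=\inf\{\mathcal{S}(A):A\supset B\ \text{open}\}\le\inf\{\nu(A):A\supset B\ \text{open}\}=\nu(B)$, hence $\mathcal{S}^{*}\le\nu$ and in particular $\mathcal{S}^{*}\ll\nu$; the same domination together with the fact that a finite Borel measure on a metric space is inner regular by closed sets — hence, in the present ambient setting (with $\nu$ Radon, and in particular $X$ compact), by compact sets — shows that $\mathcal{S}^{*}$ is Radon. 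Uniqueness is then clear: the open sets form a $\pi$-system generating $\mathcal{B}(X)$, so any two finite Borel measures agreeing on $\mathcal{O}(X)$ coincide on $\mathcal{B}(X)$.

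The main obstacle is the upgrade from the finite subadditivity granted by (iii) to countable subadditivity on open sets: this is exactly the point at which (iv) cannot be dispensed with, since one must use a compact exhaustion of an open set inside the \emph{finite} measure $\nu$ in order to extract a finite subcover from an arbitrary open cover, while controlling the tail through $\mathcal{S}(A\setminus K)\le\nu(A\setminus K)$. The second delicate point is the metric (Carath\'eodory) criterion, where superadditivity (ii) must be combined with the metric structure to manufacture disjoint open neighbourhoods of $E$ and $F$ inside any prescribed open superset.
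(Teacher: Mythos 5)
Your proof is correct. Note that the paper does not prove this lemma at all: it is quoted as a known result of De Giorgi and Letta, with references to their original article and to Buttazzo's book, so there is no in-paper argument to compare against. Your argument — extending $\mathcal{S}$ to the outer measure $\mathcal{S}^{*}(E)=\inf\{\mathcal{S}(A):E\subset A,\ A\ \text{open}\}$, upgrading finite to countable subadditivity on open sets via inner regularity of $\nu$ and a finite subcover of a compact core, verifying the metric (Carath\'eodory) criterion from superadditivity on the disjoint open $\delta/2$-neighbourhoods, and concluding with domination by $\nu$ and the $\pi$-system uniqueness argument — is the standard proof of this result and uses each hypothesis exactly where it is needed.
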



\section{Proof of the main results}

\subsection{Proof of Proposition \ref{MR1}} We divide the proof into four steps.

\subsection*{Step 1. Another formula for \boldmath$\overline{E}$\unboldmath} Let $\mathcal{E}:W^ {1,p}_\mu(X;\RR^m)\times\mathcal{O}(X)\to[0,\infty]$ be defined by
$$
\mathcal{E}(u;A):=\inf\Big\{E(v;A):v\in\mathcal{A}^m_u\Big\}.
$$
The following lemma makes clear the link between $\mathcal{E}$ and $\overline{E}$.
\begin{lemma}\label{Lemma1-Prop}
For every $u\in W^{1,p}_\mu(X;\RR^m)$ and every $A\in\mathcal{O}(X)$,
\begin{equation}\label{Step-1-Eq1-Prop}
\overline{E}(u;A)=\inf\left\{\liminf_{n\to\infty}\mathcal{E}(u_n;A):\mathcal{A}(X;\RR^m)\ni u_n\to u\hbox{ in }L^p_\mu(X;\RR^m)\right\}.
\end{equation}
\end{lemma}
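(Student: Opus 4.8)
The plan is to prove the identity \eqref{Step-1-Eq1-Prop} by establishing the two inequalities separately, the only real point being that any competitor in $\mathcal{A}^m_v$ differs from $v$ merely on the $\mu$-negligible set $X\setminus\spt(\mu)$. For the inequality ``$\geq$'', note that for every $v\in\mathcal{A}(X;\RR^m)$ we have $v\in\mathcal{A}^m_v$, hence $\mathcal{E}(v;A)\leq E(v;A)$. Therefore, for every sequence $\mathcal{A}(X;\RR^m)\ni u_n\to u$ in $L^p_\mu(X;\RR^m)$ we get $\liminf_{n\to\infty}\mathcal{E}(u_n;A)\leq\liminf_{n\to\infty}E(u_n;A)$, and taking the infimum over all such sequences shows that the right-hand side of \eqref{Step-1-Eq1-Prop} is $\leq\overline{E}(u;A)$.

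For the inequality ``$\leq$'', fix a sequence $\mathcal{A}(X;\RR^m)\ni u_n\to u$ in $L^p_\mu(X;\RR^m)$. Since $\mu(X)<\infty$ ($X$ being compact), $\nabla u_n\in L^\infty_\mu(X;\MM)$ and \eqref{p-polynomial-growth} holds, we have $\mathcal{E}(u_n;A)\leq E(u_n;A)<\infty$ for every $n$, so we may choose $v_n\in\mathcal{A}^m_{u_n}$ with
$$
E(v_n;A)\leq\mathcal{E}(u_n;A)+{1\over n}.
$$
By definition of $\mathcal{A}^m_{u_n}$ we have $v_n=u_n$ on $\spt(\mu)$, and since $\mu(X\setminus\spt(\mu))=0$ this gives $v_n=u_n$ $\mu$-a.e.; in particular $v_n$ and $u_n$ represent the same element of $L^p_\mu(X;\RR^m)$, so $v_n\to u$ in $L^p_\mu(X;\RR^m)$. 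As moreover $v_n\in\mathcal{A}(X;\RR^m)$, the sequence $(v_n)_n$ is admissible in the definition of $\overline{E}(u;A)$, whence
$$
\overline{E}(u;A)\leq\liminf_{n\to\infty}E(v_n;A)\leq\liminf_{n\to\infty}\Big(\mathcal{E}(u_n;A)+{1\over n}\Big)=\liminf_{n\to\infty}\mathcal{E}(u_n;A).
$$
Taking the infimum over all sequences $\mathcal{A}(X;\RR^m)\ni u_n\to u$ in $L^p_\mu(X;\RR^m)$ gives that $\overline{E}(u;A)$ is $\leq$ the right-hand side of \eqref{Step-1-Eq1-Prop}, which completes the proof.

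There is no serious obstacle here: the whole content is the observation that $E$, although not local, becomes harmless to modify outside $\spt(\mu)$ once one passes to $\mathcal{E}$, and that neither the $L^p_\mu$-topology nor the functional $\overline{E}$ sees the set $X\setminus\spt(\mu)$. The only step worth a word of justification is the existence of the near-minimizers $v_n$, which is why the $p$-polynomial growth bound \eqref{p-polynomial-growth} (together with $\mu(X)<\infty$ and $\nabla u_n\in L^\infty_\mu(X;\MM)$) is invoked.
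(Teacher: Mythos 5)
Your proof is correct and follows essentially the same route as the paper's: the trivial inequality $\mathcal{E}(v;A)\leq E(v;A)$ for one direction, and for the other the selection of near-minimizers $v_n\in\mathcal{A}^m_{u_n}$, which coincide with $u_n$ on $\spt(\mu)$ and hence still converge to $u$ in $L^p_\mu(X;\RR^m)$. The extra remark on why the near-minimizers exist is a harmless (and slightly more careful) addition.
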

\begin{proof}[\bf Proof of Lemma \ref{Lemma1-Prop}]
Fix $u\in W^{1,p}_\mu(X;\RR^m)$ and $A\in\mathcal{O}(X)$ and denote the right-hand side of \eqref{Step-1-Eq1-Prop} by $\overline{\mathcal{E}}(u;A)$. As $E(v;A)\geq \mathcal{E}(v;A)$ for all $v\in \mathcal{A}(X;\RR^m)$ we have $\overline{E}(u;A)\geq\overline{\mathcal{E}}(u;A)$. Thus, it remains to prove that
\begin{equation}\label{Proof-LemmA1-Eq}
\overline{\mathcal{E}}(u;A)\geq \overline{E}(u;A).
\end{equation}
Fix any $\eps>0$ and consider $\{u_n\}_n\subset \mathcal{A}(X;\RR^m)$ with $u_n\to u$ in $L^p_\mu(X;\RR^m)$ such that
$
\overline{\mathcal{E}}(u;A)+{\eps\over 2}\geq\liminf_{n\to\infty}\mathcal{E}(u_n;A).
$
To every $n\geq 1$, there corresponds $v_n\in\mathcal{A}^m_u$ such that 
$
\mathcal{E}(u_n;A)+{\eps\over 2}\geq E(v_n;A).
$
Hence $v_n\to u$ in $L^p_\mu(X;\RR^m)$ and
$
\overline{\mathcal{E}}(u;A)+\eps\geq\liminf_{n\to\infty}E(v_n;A)\geq\overline{E}(u;A),
$
and \eqref{Proof-LemmA1-Eq} follows by letting $\eps\to0$.
\end{proof}
 
\subsection*{Step 2. Integral representation of the variational functional \boldmath$\mathcal{E}$\unboldmath} We now establish an integral representation for the variational function $\mathcal{E}$. First of all, it easy to see that
$$
\mathcal{E}(u;A)=\inf_{w\in\mathcal{H}^m_u(A)}\int_AL_x(w(x))d\mu(x)
$$
for all $u\in \mathcal{A}(X;\RR^m)$ and all $A\in\mathcal{O}(X)$, where $\mathcal{H}^m_u(A)$ is given by
$$
\mathcal{H}^m_u(A):=\Big\{w\in L^\infty_\mu(A;\MM):w(x)=\nabla v(x)\hbox{ for $\mu$-a.a. }x\in A\hbox{ with }v\in\mathcal{A}^m_u\Big\}
$$
with $\mathcal{A}^m_u$ defined in \eqref{Def-of-Amu}. On the other hand, we have
\begin{lemma}\label{Lemma2-Prop}
The set $\mathcal{H}^m_u(A)$ is normally decomposable for all $u\in W^{1,p}_\mu(X;\RR^m)$ and all $A\in\mathcal{O}(X)$.
\end{lemma}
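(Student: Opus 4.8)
The plan is to verify the definition of normal decomposability directly for $\mathcal{H}^m_u(A)$, using that $\mathcal{A}(X)$ is an algebra satisfying the Uryshon property. Fix $u\in W^{1,p}_\mu(X;\RR^m)$ and $A\in\mathcal{O}(X)$, and let $w,\hat w\in\mathcal{H}^m_u(A)$; by definition there are $v,\hat v\in\mathcal{A}^m_u$ with $w=\nabla v$ and $\hat w=\nabla\hat v$ $\mu$-a.e.\ on $A$. Let $K\subset V\subset A$ with $K$ compact and $V$ open. First I would apply the Uryshon property of $\mathcal{A}(X)$ to the pair $(X\setminus V,K)$ to obtain $\phi\in\mathcal{A}(X)$ with $\phi\in[0,1]$, $\phi=1$ on $K$ and $\phi=0$ on $X\setminus V$; this $\phi$ is then a Urysohn function for the pair $(A\setminus V,K)$ in the sense required by the definition of normal decomposability (note $A\setminus V$ need not be closed in $A$, but $X\setminus V$ is closed in $X$ and $\phi$ vanishes there, so in particular on $A\setminus V$).

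Next I would set $\tilde v:=\phi v+(1-\phi)\hat v$ and check two things: that $\tilde v\in\mathcal{A}^m_u$, and that $\nabla\tilde v=\phi w+(1-\phi)\hat w$ $\mu$-a.e.\ on $A$, which together give $\phi w+(1-\phi)\hat w\in\mathcal{H}^m_u(A)$. For membership in $\mathcal{A}(X;\RR^m)$, I use that $\mathcal{A}(X)$ is an algebra containing $1$, so each component $\phi v_i+(1-\phi)\hat v_i$ lies in $\mathcal{A}(X)$. For the boundary condition defining $\mathcal{A}^m_u$: on $\spt(\mu)$ we have $v=u=\hat v$, hence $\tilde v=\phi u+(1-\phi)u=u$ on $\spt(\mu)$, so $\tilde v\in\mathcal{A}^m_u$. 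For the gradient identity, I apply the Leibniz rule \eqref{Gradient-Property}: $\nabla\tilde v=\phi\nabla v+D\phi\otimes v+(1-\phi)\nabla\hat v-D\phi\otimes\hat v=\phi\nabla v+(1-\phi)\nabla\hat v+D\phi\otimes(v-\hat v)$. Now on $\mu$-a.a.\ of $A$ we need the last term to vanish and $\nabla v,\nabla\hat v$ to agree with $w,\hat w$; the latter holds by definition of $w,\hat w$. The term $D\phi\otimes(v-\hat v)$ vanishes $\mu$-a.e.\ because $v-\hat v=0$ on $\spt(\mu)$, hence $\mu$-a.e.\ on $X$, so $D\phi\otimes(v-\hat v)=0$ $\mu$-a.e.; thus $\nabla\tilde v=\phi w+(1-\phi)\hat w$ $\mu$-a.e.\ on $A$, as desired.

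The only genuinely delicate point is making sure the $\spt(\mu)$ condition is used correctly rather than a pointwise-everywhere condition: $w$ and $\hat w$ are defined only $\mu$-a.e., and $\mathcal{A}^m_u$ imposes agreement with $u$ only on $\spt(\mu)$, so all identities must be read $\mu$-a.e., which is exactly the regime in which $v-\hat v\equiv 0$. Given that, the argument is routine: the algebra structure handles closure under the convex-combination-by-a-cutoff operation, the Uryshon property supplies the cutoff, and the Leibniz rule \eqref{Gradient-Property} handles the gradient. I do not expect any obstacle beyond this bookkeeping; in particular no growth or coercivity hypothesis on $\{L_x\}$ is needed here, only the algebraic and topological structure of $\mathcal{A}(X)$.
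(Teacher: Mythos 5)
Your proof is correct and follows essentially the same route as the paper's: the Uryshon property supplies the cutoff $\phi$, the Leibniz rule \eqref{Gradient-Property} gives $\nabla(\phi v+(1-\phi)\hat v)=\phi\nabla v+(1-\phi)\nabla\hat v+D\phi\otimes(v-\hat v)$, and the last term vanishes $\mu$-a.e.\ since $v=\hat v=u$ on $\spt(\mu)$. You merely spell out explicitly the $\mu$-a.e.\ vanishing of $D\phi\otimes(v-\hat v)$, which the paper leaves implicit.
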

\begin{proof}[\bf Proof of Lemma \ref{Lemma2-Prop}]
Let $u\in\mathcal{A}(X;\RR^m)$ and $A\in\mathcal{O}(X)$. Fix $K,V\subset A$ with $K$ compact, $V$ open and $K\subset V$, fix $w,\hat w\in\mathcal{H}^m_u(A)$ and consider $v,\hat v\in\mathcal{A}^m_u$ such that for $\mu$-a.e. $x\in A$, $w(x)=\nabla v(x)$ and $\hat w(x)=\nabla \hat v(x)$. As $\mathcal{A}(X)$ satisfies the Uryshon property, there exists a Uryshon function $\varphi\in \mathcal{A}(X)$ for the pair $(X\setminus V,K)$. Then $\phi:=\varphi|_{A}$ is a Uryshon function for the pair $(A\setminus V,K)$. On the other hand, using \eqref{Gradient-Property} we have $\nabla(\varphi v+(1-\varphi)\hat v)=\varphi\nabla v+(1-\varphi)\nabla\hat v+D\varphi\otimes(v-\hat v)$, and so $\nabla(\varphi v+(1-\varphi)\hat v)(x)=\phi(x) w(x)+(1-\phi(x))\hat w(x)$ for $\mu$-a.a. $x\in A$. Moreover, $\varphi v+(1-\varphi)\hat v\in\mathcal{A}^m_u$ and consequently $\phi w+(1-\phi)\hat w\in \mathcal{H}^m_u(A)$.
\end{proof}

\medskip

From now on, fix $u\in \mathcal{A}(X;\RR^m)$ and $A\in\mathcal{O}(X)$. As $\{L_x\}$ is of $p$-polynomial growth, i.e., \eqref{p-polynomial-growth} holds, and $\mathcal{H}^m_u(A)$ is normally decomposable by Lemma \ref{Lemma2-Prop}, from Theorem \ref{Interchange-Theorem} we deduce that
\begin{equation}\label{FirsT-IntEGRAL-ReprESentation}
\mathcal{E}(u;A)=\int_A\inf_{\xi\in\Gamma_u(x,A)}L_x(\xi)d\mu(x)
\end{equation}
with $\Gamma(\cdot,A):A\dto\MM$ given by the $\mu$-essential supremum of $\mathcal{H}_u^m(A)$.

\subsection*{Step 3. Refining the integral representation of \boldmath$\mathcal{E}$\unboldmath} Finally, to refine the integral representation of $\mathcal{E}$ in \eqref{FirsT-IntEGRAL-ReprESentation}, we need the following lemma.

\begin{lemma}
$\Gamma_u(x,A)=N^m_\mu(x)+\{\nabla_\mu u(x)\}$ for $\mu$-a.a. $x\in A$.
\end{lemma}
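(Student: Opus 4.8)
The plan is to prove the two inclusions $\Gamma_u(x,A)\subset N^m_\mu(x)+\{\nabla_\mu u(x)\}$ and $N^m_\mu(x)+\{\nabla_\mu u(x)\}\subset\Gamma_u(x,A)$ for $\mu$-a.e. $x\in A$, working from the definition of the $\mu$-essential supremum of $\mathcal{H}^m_u(A)$ together with its representation via a countable subfamily (Lemma \ref{representation-mu-ess-sup}) and the generalized normal-space description (Lemma \ref{ReMaRK-GeneRalIzAtION-LeMMa}). Throughout I would use that for any $v\in\mathcal{A}^m_u$ the element $\nabla v\in\mathcal{H}^m_u(A)$ and that by Lemma \ref{comp-EquA-mu-aa} the $\mu$-gradient $\nabla_\mu v(x)=\nabla_\mu u(x)$ for $\mu$-a.a.\ $x$, while $\nabla v(x)-\nabla_\mu u(x)=\nabla v(x)-P^m_\mu(x)(\nabla v(x))\in N^m_\mu(x)$.

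For the first inclusion, note that the constant multifunction $x\mapsto N^m_\mu(x)+\{\nabla_\mu u(x)\}$ is a closed-valued $\mu$-measurable multifunction (each fibre is an affine subspace of $\MM$, hence closed, and measurability follows since $x\mapsto\nabla_\mu u(x)$ is $\mu$-measurable and $x\mapsto N^m_\mu(x)$ is generated by countably many $\nabla v$, $v\in\mathcal{A}^m_0$, using the Remark after the definition of $N^m_\mu$). Every $w\in\mathcal{H}^m_u(A)$ satisfies $w(x)=\nabla v(x)=\nabla_\mu u(x)+(\nabla v(x)-\nabla_\mu v(x))\in \nabla_\mu u(x)+N^m_\mu(x)$ for $\mu$-a.a.\ $x$. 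Hence this affine-subspace-valued multifunction is an admissible competitor $\Gamma'$ in part (ii) of the definition of the $\mu$-essential supremum, so $\Gamma_u(x,A)\subset N^m_\mu(x)+\{\nabla_\mu u(x)\}$ for $\mu$-a.a.\ $x\in A$.

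For the reverse inclusion, part (i) of the definition already gives $\{\nabla v(x):v\in\mathcal{A}^m_u\}\subset\Gamma_u(x,A)$ for $\mu$-a.a.\ $x$; taking $v=u$ (when $u\in\mathcal{A}(X;\RR^m)$, which is the case fixed in Step 2) yields $\nabla u(x)\in\Gamma_u(x,A)$, but I actually want the affine space through $\nabla_\mu u(x)$. The cleaner route is: for any fixed $v_0\in\mathcal{A}^m_u$ and any $\bar v\in\mathcal{A}^m_0$ we have $v_0+\bar v\in\mathcal{A}^m_u$, so $\nabla v_0(x)+\nabla\bar v(x)\in\Gamma_u(x,A)$ for $\mu$-a.a.\ $x$; since by Lemma \ref{ReMaRK-GeneRalIzAtION-LeMMa} the vectors $\nabla\bar v(x)$, $\bar v\in\mathcal{A}^m_0$, span (indeed exhaust, after closure) $N^m_\mu(x)$, and since $\Gamma_u(x,A)$ is closed, I get $\nabla v_0(x)+N^m_\mu(x)\subset\Gamma_u(x,A)$ for $\mu$-a.a.\ $x$. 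Finally $\nabla v_0(x)=\nabla_\mu u(x)+\zeta_0(x)$ with $\zeta_0(x)\in N^m_\mu(x)$, and $N^m_\mu(x)$ is a linear subspace, so $\nabla v_0(x)+N^m_\mu(x)=\nabla_\mu u(x)+N^m_\mu(x)$, giving the claim.

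The main obstacle is the measurability bookkeeping: one must make sure the affine multifunction used as $\Gamma'$ in the first inclusion genuinely lies in $\mathcal{M}_\mu(A;\MM)$, and that one may pass from ``$w(x)\in\Gamma'(x)$ for each $w$ on a full-measure set depending on $w$'' to ``on a common full-measure set,'' which is exactly where Lemma \ref{representation-mu-ess-sup} (reduction to a countable subfamily $\mathcal D\subset\mathcal H^m_u(A)$) is used; similarly, in the reverse inclusion one needs a countable family of $\bar v\in\mathcal{A}^m_0$ whose gradients are dense in $N^m_\mu(x)$ for $\mu$-a.a.\ $x$, which again follows from the separability built into the construction of $N^m_\mu$. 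Everything else is routine linear algebra fibrewise. I would also remark that, although Step 2 fixes $u\in\mathcal{A}(X;\RR^m)$, the statement as written is for $u\in W^{1,p}_\mu(X;\RR^m)$; the argument above only uses the existence of at least one $v_0\in\mathcal{A}^m_u$, i.e.\ it is enough that $\mathcal{A}^m_u\neq\emptyset$, which holds precisely in the case treated here, so no extra care is needed beyond recording this.
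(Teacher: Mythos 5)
Your proof is correct and follows essentially the same route as the paper: the upper inclusion via the minimality/countable-representation property of the $\mu$-essential supremum together with Lemma \ref{comp-EquA-mu-aa}, and the lower inclusion by translating elements of $N^m_\mu(x)$ (realized as gradients of elements of $\mathcal{A}^m_0$) by a fixed $\nabla v_0$ with $v_0\in\mathcal{A}^m_u$. The only cosmetic difference is that the paper reaches the lower inclusion by first comparing $\Gamma_u(\cdot,A)$ with the global essential supremum $\Gamma_u$ of $\mathcal{H}^m_u$ and writing $\mathcal{H}^m_u=\nabla u+\mathcal{H}^m_0$, whereas you work directly inside $\mathcal{H}^m_u(A)$; both arguments rely on the same countability/measurability bookkeeping you correctly flag.
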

\begin{proof}
From Lemma \ref{representation-mu-ess-sup} there exists a countable subset $\mathcal{D}_u(A)$ of $\mathcal{H}_u^m(A)$ such that 
\begin{equation}\label{AppLIcation-mu-ess-sup-lemma}
\Gamma_u(x,A)={\rm cl}\big\{w(x):w\in\mathcal{D}_u(A)\big\}\hbox{ for $\mu$-a.a. $x\in A$.}
\end{equation}
Fix any $w\in\mathcal{D}_u(A)$. By definition of $\mathcal{H}^m_u(A)$, there exists $v\in\mathcal{A}^m_u$ such that for $\mu$-a.e. $x\in A$, $w(x)=\nabla v(x)$. But $\nabla_\mu v(x)=\nabla_\mu u(x)$ for $\mu$-a.a. $x\in X$ by Lemma \ref{comp-EquA-mu-aa}, hence $w(x)=\nabla v(x)-\nabla_\mu v(x)+\nabla_\mu u(x)$ for $\mu$-a.a. $x\in A$, where for $\mu$-a.e. $x\in A$, $\nabla v(x)-\nabla_\mu u(x)\in N^m_\mu(x)$, and so $w(x)\in N_\mu^m(x)+\{\nabla_\mu u(x)\}$ for $\mu$-a.a. $x\in A$. Thus $\{w(x):w\in\mathcal{D}_u(A)\}\subset N^m_\mu(x)+\{\nabla_\mu u(x)\}$ for $\mu$-a.a. $x\in A$. Using \eqref{AppLIcation-mu-ess-sup-lemma} it follows that 
$$
\Gamma_u(x,A)\subset N^m_\mu(x)+\{\nabla_\mu u(x)\}\hbox{ for $\mu$-a.a. $x\in A$.}
$$
Let $\Gamma_u:X\dto\MM$ be the $\mu$-essential supremum of $\mathcal{H}^m_u$ (which corresponds to $\mathcal{H}^m_u(A)$ with $A=X$). If $w\in\mathcal{H}^m_u$ then $w|_A\in\mathcal{H}^m_u(A)$, and so $\Gamma_u(x)\subset\Gamma_u(x,A)$ for $\mu$-a.a. $x\in A$ because, from Lemma \ref{representation-mu-ess-sup}, $\Gamma_u(x)={\rm cl}\{w(x):w\in\mathcal{D}_u\}$ for $\mu$-a.a. $x\in A$ with $\mathcal{D}_u$ a countable subset of $\mathcal{H}^m_u$. Hence, the proof is completed by showing that
\begin{equation}\label{GoAL-FirSt-PrOOf}
N^m_\mu(x)+\{\nabla_\mu u(x)\}\subset \Gamma_u(x)\hbox{ for $\mu$-a.a. $x\in A$.}
\end{equation}
For $\mu$-a.e. $x\in A$, let $\xi\in N^m_\mu(x)+\{\nabla_\mu u(x)\}$. Then $P_\mu(x)(\xi-\nabla u(x))=0$, hence $\xi-\nabla u(x)\in N^m_\mu(x)$ and so there exists $\hat w\in\mathcal{H}^m_0$ (which corresponds to $\mathcal{H}^m_u$ with $u=0$) such that  $\xi-\nabla u(x)=\hat w(x)$. Setting $\bar w:=\nabla u$ we then have $\xi=(\bar w+\hat w)(x)$ with $\bar w+\hat w\in\mathcal{H}^m_u$. Thus $N^m_\mu(x)+\{\nabla_\mu u(x)\}\subset\{w(x):w\in\mathcal{H}^m_u\}$ for $\mu$-a.a. $x\in A$, and \eqref{GoAL-FirSt-PrOOf} follows because, by definition of the $\mu$-essential supremum, $\{w(x):w\in\mathcal{H}^m_u\}\subset\Gamma_u(x)$ for $\mu$-a.a. $x\in X$.
\end{proof}

\medskip

This completes the proof of Proposition \ref{MR1}. $\blacksquare$

\subsection{Proof of Theorem \ref{Main-Theorem-Convex}} Fix $A\in\mathcal{O}(X)$ and define the increasing set $\mathcal{F}:W^{1,p}_\mu(X;\RR^m)\to[0,\infty]$ by
\begin{equation}\label{NeW-FunCtiONAL}
\mathcal{F}(u):=\int_A\widehat{L}_x(\nabla_\mu u(x))d\mu(x).
\end{equation}
(Note that $\mathcal{F}(u)=\widehat{\mathcal{E}}(u;A)$ for all $u\in\mathcal{A}(X;\RR^m)$, where $\widehat{\mathcal{E}}(\cdot;A):W^{1,p}_\mu(X;\RR^m)\to[0,\infty]$ is defined in \eqref{DeF-fUnCt-RemarK-Local}.) From Remark \ref{p-coercivity-p-polynomial-growth-remark} we see that $\{\widehat{L}_x\}$ is both $p$-coercive and of $p$-polynomial growth, i.e.,
\begin{equation}\label{EqUaT-ThEoReM-SecoND-ResuLT1}
C|\xi|^p\leq\widehat{L}_x(\xi)\leq c(1+|\xi|^p)\hbox{ for all }\xi\in T^m_\mu(x)\hbox{ and $\mu$-a.a. }x\in X
\end{equation}
with $C>0$ and $c>0$ given respectively by \eqref{p-coercivity} and \eqref{p-polynomial-growth}. Recalling that $\mu$ is finite and using the second inequality in \eqref{EqUaT-ThEoReM-SecoND-ResuLT1} and the continuity of $L_x$ (see Remark \ref{continuity-remaRk}), from Vitali's convergence theorem we deduce that $\mathcal{F}$ is continuous with respect to the strong convergence in $W^{1,p}_\mu(X;\RR^m)$. Hence, recalling that $\mathcal{A}(X;\RR^m)$ is dense in $W^{1,p}_\mu(X;\RR^m)$ with respect to the strong convergence in $W^{1,p}_\mu(X;\RR^m)$ and taking Proposition \ref{MR1} into account, for all $u\in W^{1,p}_\mu(X;\RR^m)$ there is $\{u_n\}_n\subset \mathcal{A}(X;\RR^m)$ such that:
\begin{itemize}
\item[$\bullet$] $u_n\to u$ in $W^{1,p}_\mu(X;\RR^m)$ and so $u_n\to u$ in $L^p_\mu(X;\RR^m)$;
\item[$\bullet$] $\mathcal{F}(u)=\lim_{n\to\infty}\mathcal{F}(u_n)=\lim_{n\to\infty}\widehat{\mathcal{E}}(u_n;A)\geq\overline{E}(u;A)$,
\end{itemize}
which shows that $\mathcal{F}\geq \overline{E}(\cdot;A)$. Define $\overline{\mathcal{F}}^w,\overline{\mathcal{F}}^s:W^{1,p}_\mu(X;\RR^m)\to[0,\infty]$ by:
\begin{itemize}
\item[$\bullet$] $\displaystyle\overline{\mathcal{F}}^w(u):=\inf\left\{\liminf_{n\to\infty}\mathcal{F}(u_n):u_n\wto u\hbox{ in }W^{1,p}_\mu(X;\RR^m)\right\}$;
\item[$\bullet$] $\displaystyle\overline{\mathcal{F}}^s(u):=\inf\left\{\liminf_{n\to\infty}\mathcal{F}(u_n):u_n\to u\hbox{ in }L^p_\mu(X;\RR^m)\right\}$.
\end{itemize}
Since $\widehat{L}_x$ is convex for $\mu$-a.a. $x\in X$, the functional $\mathcal{F}$ is convex, and so $\overline{\mathcal{F}}^w=\mathcal{F}$ because $\mathcal{F}$ is strongly continuous in $W^{1,p}_\mu(X;\RR^m)$. On the other hand, consider $u\in W^{1,p}_\mu(X;\RR^m)$ and $\{u_n\}_n\subset W^{1,p}_\mu(X;\RR^m)$ such that $u_n\to u$ in $L^p_\mu(X;\RR^m)$ and $\liminf_{n\to\infty}\mathcal{F}(u_n)=\lim_{n\to\infty}\mathcal{F}(u_n)<\infty$. Using the first inequality in \eqref{EqUaT-ThEoReM-SecoND-ResuLT1} we deduce that $\sup_n\|u_n\|_{W^{1,p}_\mu(X;\RR^m)}<\infty$, hence (up to a subsequence) $u_n\wto u$ in $W^{1,p}_\mu(X;\RR^m)$ because $W^{1,p}_\mu(X;\RR^m)$ is reflexive (since $p\in]1,\infty[$, see Remark \ref{ReFLeXIVITY-of-Metric-SObOlev-Spaces}), and so $\liminf_{n\to\infty}\mathcal{F}(u_n)\geq\overline{\mathcal{F}}^w(u)$. Thus $\overline{\mathcal{F}}^s\geq\overline{\mathcal{F}}^w$ and consequently $\overline{\mathcal{F}}^s=\mathcal{F}$ because $\mathcal{F}\geq \overline{\mathcal{F}}^s$. As $\widehat{\mathcal{E}}(\cdot;A)\geq\mathcal{F}$ we have $\overline{E}(\cdot;A)\geq\overline{\mathcal{F}}^s$ by using Proposition \ref{MR1}, and the proof is complete. $\blacksquare$

\begin{remark}
From the proof of Theorem \ref{Main-Theorem-Convex} we can extract the following lemma which asserts that for $A\in\mathcal{O}(X)$ and when \eqref{p-polynomial-growth} is satisfied, $\overline{E}(\cdot,A)$ is the lower semicontinuous envelope of $\mathcal{F}$ defined in \eqref{NeW-FunCtiONAL} with respect to the strong convergence in $L^p_\mu(X;\RR^m)$.
\begin{lemma}\label{LeMMa-Main-Convex-Theorem}
If \eqref{p-polynomial-growth} holds then 
\begin{equation}\label{NeW-FoRmUlA}
\overline{E}(u;A)=\inf\left\{\liminf_{n\to\infty}\int_A\widehat{L}_x(\nabla_\mu u_n(x))d\mu(x):W^{1,p}_\mu(X;\RR^m)\ni u_n\stackrel{L^p_\mu}{\to} u\right\}
\end{equation}
for all $u\in W^{1,p}_\mu(X;\RR^m)$ and all $A\in\mathcal{O}(X)$.
\end{lemma}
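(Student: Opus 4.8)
The plan is to prove \eqref{NeW-FoRmUlA} as two inequalities between $\overline{E}(u;A)$ and its right-hand side, which I write $\mathcal{G}(u;A)$, keeping the functional $\mathcal{F}$ of \eqref{NeW-FunCtiONAL}; recall that $\mathcal{F}(v)=\widehat{E}(v;A)$ for every $v\in\mathcal{A}(X;\RR^m)$ and that $\mathcal{G}(u;A)=\inf\{\liminf_n\mathcal{F}(u_n):W^{1,p}_\mu(X;\RR^m)\ni u_n\to u\hbox{ in }L^p_\mu(X;\RR^m)\}$. The inequality $\mathcal{G}(u;A)\le\overline{E}(u;A)$ is immediate from Proposition \ref{MR1}, which expresses $\overline{E}(u;A)$ as the infimum of $\liminf_n\widehat{E}(u_n;A)$, equal to $\liminf_n\mathcal{F}(u_n)$, over sequences $u_n\in\mathcal{A}(X;\RR^m)$ with $u_n\to u$ in $L^p_\mu$; since $\mathcal{A}(X;\RR^m)\subset W^{1,p}_\mu(X;\RR^m)$, every such sequence is admissible for $\mathcal{G}(u;A)$.

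For the reverse inequality the key preliminary step is the pointwise bound $\mathcal{F}(v)\ge\overline{E}(v;A)$ for all $v\in W^{1,p}_\mu(X;\RR^m)$. In the proof of Theorem \ref{Main-Theorem-Convex} this was obtained from the strong $W^{1,p}_\mu$-continuity of $\mathcal{F}$, but that argument goes through the continuity of $\widehat{L}_x$ (Remark \ref{continuity-remaRk}), hence through $p$-coercivity, which is not assumed here. Instead I would use only that $\widehat{L}_x$ is \emph{upper} semicontinuous on $T^m_\mu(x)$ — true unconditionally, being an infimum of the continuous maps $\xi\mapsto L_x(\xi+\zeta)$ — together with its $p$-polynomial growth (Remark \ref{p-coercivity-p-polynomial-growth-remark}). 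Picking $v_n\in\mathcal{A}(X;\RR^m)$ with $v_n\to v$ in $W^{1,p}_\mu(X;\RR^m)$ (possible since $W^{1,p}_\mu(X;\RR^m)$ is the completion of $\mathcal{A}(X;\RR^m)$), so that $\nabla_\mu v_n\to\nabla_\mu v$ in $L^p_\mu(X;\MM)$, I would pass to a subsequence along which $\nabla_\mu v_n\to\nabla_\mu v$ $\mu$-a.e. and $|\nabla_\mu v_n|\le h$ for some $h\in L^p_\mu(X)$, so that $\widehat{L}_x(\nabla_\mu v_n(x))\le c(1+h(x)^p)\in L^1_\mu(X)$; the reverse Fatou lemma together with upper semicontinuity then gives $\limsup_n\widehat{E}(v_n;A)\le\int_A\widehat{L}_x(\nabla_\mu v(x))d\mu(x)=\mathcal{F}(v)$, and, since $v_n\to v$ also in $L^p_\mu$, Proposition \ref{MR1} yields $\overline{E}(v;A)\le\mathcal{F}(v)$.

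With this bound in hand, I would deduce $\mathcal{G}(u;A)\ge\overline{E}(u;A)$ by diagonalization. Given $u_n\in W^{1,p}_\mu(X;\RR^m)$ with $u_n\to u$ in $L^p_\mu$ and, without loss of generality, $\mathcal{F}(u_n)\to\ell<\infty$, I would for each $n$ apply Proposition \ref{MR1} to $u_n$, combined with $\overline{E}(u_n;A)\le\mathcal{F}(u_n)$, to select $v_n\in\mathcal{A}(X;\RR^m)$ with $\|v_n-u_n\|_{L^p_\mu(X;\RR^m)}<1/n$ and $\widehat{E}(v_n;A)\le\mathcal{F}(u_n)+2/n$; then $v_n\to u$ in $L^p_\mu$ and $\liminf_n\widehat{E}(v_n;A)\le\ell$, so Proposition \ref{MR1} gives $\overline{E}(u;A)\le\ell$, and taking the infimum over the sequences $\{u_n\}$ completes the proof. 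I expect the preliminary bound $\mathcal{F}\ge\overline{E}(\cdot;A)$ to be the only delicate point: one must not import the continuity of $\widehat{L}_x$ (which can genuinely fail without coercivity, the defining infimum not being attained) and must run the reverse-Fatou argument with upper semicontinuity and the growth bound alone — this also uses, as tacitly throughout the paper, the $\mu$-measurability of $x\mapsto\widehat{L}_x(\xi)$ that makes $\widehat{E}$ meaningful along the approximating sequences.
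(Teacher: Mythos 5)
Your proof is correct. The paper gives no standalone argument here: the lemma is stated as extractable from the proof of Theorem \ref{Main-Theorem-Convex}, and your write-up supplies precisely the two modifications that such an extraction requires once coercivity and convexity are dropped. For the pointwise bound $\mathcal{F}(v)\ge\overline{E}(v;A)$ the paper invokes the strong $W^{1,p}_\mu$-continuity of $\mathcal{F}$, proved via Vitali's convergence theorem and the continuity of $\widehat{L}_x$ of Remark \ref{continuity-remaRk}, which rests on \eqref{p-coercivity}; you correctly observe that only the one-sided estimate $\limsup_n\widehat{E}(v_n;A)\le\mathcal{F}(v)$ is needed and that it follows from the unconditional upper semicontinuity of $\widehat{L}_x$ (an infimum of continuous functions) together with reverse Fatou along an a.e.-convergent, $L^p_\mu$-dominated subsequence of $\{\nabla_\mu v_n\}_n$. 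For the inequality $\overline{\mathcal{F}}^s\ge\overline{E}(\cdot;A)$ the theorem's proof passes through $\overline{\mathcal{F}}^s=\overline{\mathcal{F}}^w=\mathcal{F}$, which uses convexity and reflexivity; your diagonalization via Proposition \ref{MR1} (or, equivalently, the $L^p_\mu$-lower semicontinuity of the relaxed functional $\overline{E}(\cdot;A)$ combined with the pointwise bound) bypasses both. What each approach buys: the paper's route yields the stronger identity $\overline{\mathcal{F}}^s=\mathcal{F}$, but only under the full hypotheses of Theorem \ref{Main-Theorem-Convex}, whereas yours establishes the lemma under \eqref{p-polynomial-growth} alone, as stated. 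The $\mu$-measurability of $x\mapsto\widehat{L}_x(\nabla_\mu v(x))$ for general $v\in W^{1,p}_\mu(X;\RR^m)$, which you flag, is indeed tacit throughout the paper and is not an additional gap specific to your argument.
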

\end{remark}

\subsection{Proof of Theorem \ref{N-C-IRT}} Fix $u\in W^{1,p}_\mu(X;\RR^m)$ and define $\mathcal{S}_u:\mathcal{O}(X)\to[0,\infty]$ by 
$$
\mathcal{S}_u(A):=\overline{E}(u;A).
$$ 
Taking Lemma \ref{LeMMa-Main-Convex-Theorem} into account and using the second inequality in \eqref{EqUaT-ThEoReM-SecoND-ResuLT1} we see that 
\begin{equation}\label{ImPorTanT-EqUAtION}
\mathcal{S}_u(A)\leq\int_A c(1+|\nabla_\mu u(x)|^p)d\mu(x)\hbox{ for all }A\in\mathcal{O}(X).
\end{equation}
Thus, the condition (iv) of Lemma \ref{DeGiorgi-Letta-Lemma} is satisfied with $\nu=c(1+|\nabla_\mu u|^p)d\mu$ (which is absolutely continuous with respect to $\mu$). On the other hand, it is easily seen that the conditions (i) and (ii) of Lemma \ref{DeGiorgi-Letta-Lemma} are satisfied. Hence, the proof is completed by proving  the condition (iii) of Lemma \ref{DeGiorgi-Letta-Lemma}, i.e., 
\begin{equation}\label{Subadditivity-First-Goal}
\mathcal{S}_u(A\cup B)\leq \mathcal{S}_u(A)+\mathcal{S}_u(B)\hbox{ for all }A,B\in\mathcal{O}(X). 
\end{equation}
Indeed, by Lemma \ref{DeGiorgi-Letta-Lemma}, the set function $\mathcal{S}_u$ can be (uniquely) extended to a (finite) positive Radon measure which is absolutely continuous with respect to $\mu$, and the theorem follows by using Radon-Nikodym's theorem and then Lebesgue's differentiation theorem. To show \eqref{Subadditivity-First-Goal} we need the following lemma.
\begin{lemma}\label{LeMMa-MaiN-TheOReM1}
If $U,V,Z,T\in\mathcal{O}(X)$ are such that $\overline{Z}\subset U$ and $T\subset V$,  then
\begin{equation}\label{SubAddiTive-Goal}
\mathcal{S}_u(Z\cup T)\leq\mathcal{S}_u(U)+\mathcal{S}_u(V).
\end{equation}
\end{lemma}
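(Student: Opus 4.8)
The plan is to prove \eqref{SubAddiTive-Goal} by a De Giorgi slicing (cut-off) argument, gluing a near-optimal sequence for $\mathcal{S}_u(U)$ to one for $\mathcal{S}_u(V)$ across a thin family of annular shells surrounding $\overline{Z}$. Throughout I would work with the ``local'' formula for $\overline{E}$ given by Lemma \ref{LeMMa-Main-Convex-Theorem}: since \eqref{p-polynomial-growth} holds, $\mathcal{S}_u(A)=\overline{E}(u;A)$ equals the infimum of $\liminf_n\int_A\widehat{L}_x(\nabla_\mu u_n)\,d\mu$ over sequences $W^{1,p}_\mu(X;\RR^m)\ni u_n\to u$ in $L^p_\mu(X;\RR^m)$. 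We may assume $\mathcal{S}_u(U)$ and $\mathcal{S}_u(V)$ are finite, otherwise \eqref{SubAddiTive-Goal} is trivial. Fix $\eps>0$, choose $\{v_n\}\subset W^{1,p}_\mu(X;\RR^m)$ with $v_n\to u$ in $L^p_\mu(X;\RR^m)$ and $\liminf_n\int_U\widehat{L}_x(\nabla_\mu v_n)\,d\mu\le\mathcal{S}_u(U)+\eps$, and likewise $\{w_n\}$ for $V$; passing to a common subsequence we may assume both $\liminf$'s are limits, so that $\int_U\widehat{L}_x(\nabla_\mu v_n)\,d\mu$ and $\int_V\widehat{L}_x(\nabla_\mu w_n)\,d\mu$ are bounded, say by a constant $M$. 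By the $p$-coercivity of $\widehat{L}$ (Remark \ref{p-coercivity-p-polynomial-growth-remark}) we then have $\int_U|\nabla_\mu v_n|^p\,d\mu\le M/C$ and $\int_V|\nabla_\mu w_n|^p\,d\mu\le M/C$, and $\|v_n-w_n\|_{L^p_\mu}\to0$ since both converge to $u$.

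For the geometry, $\overline{Z}$ is compact and contained in the open set $U$, so $\delta:={\rm dist}(Z,X\setminus U)>0$ (if $U=X$ then \eqref{SubAddiTive-Goal} is immediate because $\mathcal{S}_u$ is increasing). Fix an integer $q\ge1$, pick radii $0<r_0<r_1<\cdots<r_q<\delta$, and for $j=1,\dots,q$ set $\Sigma_j:=\{x\in X:r_{j-1}<{\rm dist}(x,Z)<r_j\}$; these are pairwise disjoint open sets, each contained in $U$ (since ${\rm dist}(\cdot,Z)<r_q<\delta$ forces membership in $U$) and disjoint from $Z$. Using the Urysohn property of $\mathcal{A}(X)$, choose $\varphi_j\in\mathcal{A}(X)$ with values in $[0,1]$ equal to $1$ on $Z^-(r_{j-1})$ and to $0$ on $Z^+(r_j)$; both of these sets belong to $\mathcal{K}(X)$, so by \eqref{MU-Der-Prop-1} we get $D_\mu\varphi_j=0$ $\mu$-a.e.\ on $Z^-(r_{j-1})\cup Z^+(r_j)$, i.e.\ $D_\mu\varphi_j$ vanishes $\mu$-a.e.\ off $\Sigma_j$. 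Now set $z_n^{(j)}:=\varphi_j v_n+(1-\varphi_j)w_n$. By Remark \ref{Remark-MU-Gradient-Property-BiS} each $z_n^{(j)}\in W^{1,p}_\mu(X;\RR^m)$, it converges to $u$ in $L^p_\mu(X;\RR^m)$ uniformly in $j$ (as $0\le\varphi_j\le1$), and by \eqref{MU-Gradient-Property-BiS},
$$
\nabla_\mu z_n^{(j)}=\varphi_j\nabla_\mu v_n+(1-\varphi_j)\nabla_\mu w_n+D_\mu\varphi_j\otimes(v_n-w_n).
$$

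Partitioning $X$ into $Z^-(r_{j-1})$, $\Sigma_j$ and $Z^+(r_j)$, I would estimate $\int_{Z\cup T}\widehat{L}_x(\nabla_\mu z_n^{(j)})\,d\mu$ piecewise: on $(Z\cup T)\cap Z^-(r_{j-1})$ one has $\nabla_\mu z_n^{(j)}=\nabla_\mu v_n$ $\mu$-a.e.\ and this set lies in $U$, contributing at most $\int_U\widehat{L}_x(\nabla_\mu v_n)\,d\mu$; on $(Z\cup T)\cap Z^+(r_j)$ one has $\nabla_\mu z_n^{(j)}=\nabla_\mu w_n$ $\mu$-a.e.\ and, since $Z\cap Z^+(r_j)=\emptyset$, this set equals $T\cap Z^+(r_j)\subset V$, contributing at most $\int_V\widehat{L}_x(\nabla_\mu w_n)\,d\mu$; and $(Z\cup T)\cap\Sigma_j=T\cap\Sigma_j$ lies in $U\cap V$, where the $p$-polynomial growth of $\widehat{L}$ and $0\le\varphi_j\le1$ give a bound $R_{n,j}$ of the form $c\,\mu(T\cap\Sigma_j)+c\,3^{p-1}\big(\int_{\Sigma_j}|\nabla_\mu v_n|^p\,d\mu+\int_{T\cap\Sigma_j}|\nabla_\mu w_n|^p\,d\mu+\|D_\mu\varphi_j\|_{L^\infty_\mu}^p\|v_n-w_n\|_{L^p_\mu}^p\big)$. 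Summing over $j=1,\dots,q$ and using that the $\Sigma_j$ are disjoint and contained in $U$ (resp.\ the $T\cap\Sigma_j$ disjoint and contained in $V$) together with the coercivity bounds gives $\sum_{j=1}^q R_{n,j}\le\mathcal{R}+C_q\|v_n-w_n\|_{L^p_\mu}^p$, where $\mathcal{R}$ depends only on $c,C,M,\mu(X)$ and not on $q$, while $C_q$ depends on $q$ and the $\varphi_j$. By the pigeonhole principle, for each $n$ there is $j(n)\in\{1,\dots,q\}$ with $R_{n,j(n)}\le\frac1q(\mathcal{R}+C_q\|v_n-w_n\|_{L^p_\mu}^p)$; setting $\tilde z_n:=z_n^{(j(n))}$ we obtain $\tilde z_n\to u$ in $L^p_\mu(X;\RR^m)$ and $\int_{Z\cup T}\widehat{L}_x(\nabla_\mu\tilde z_n)\,d\mu\le\int_U\widehat{L}_x(\nabla_\mu v_n)\,d\mu+\int_V\widehat{L}_x(\nabla_\mu w_n)\,d\mu+\frac1q(\mathcal{R}+C_q\|v_n-w_n\|_{L^p_\mu}^p)$. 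Taking $\liminf_n$ (bounded above by $\limsup_n$), recalling $\|v_n-w_n\|_{L^p_\mu}\to0$ with $C_q$ fixed, and applying Lemma \ref{LeMMa-Main-Convex-Theorem} to $Z\cup T$ gives $\mathcal{S}_u(Z\cup T)\le\mathcal{S}_u(U)+\mathcal{S}_u(V)+2\eps+\mathcal{R}/q$; letting $q\to\infty$ and then $\eps\to0$ yields \eqref{SubAddiTive-Goal}.

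The main obstacle is the transition error $D_\mu\varphi_j\otimes(v_n-w_n)$: although $v_n-w_n\to0$ in $L^p_\mu$, the norms $\|D_\mu\varphi_j\|_{L^\infty_\mu}$ are not controlled (we are not assuming (A$_2$) here), so a single cut-off would leave an $O(1)$ error over the transition shell; the slicing into $q$ disjoint shells with the pigeonhole choice of $j(n)$ is precisely what absorbs this, leaving only the harmless $\mathcal{R}/q$. A secondary point, needed for the shell estimate, is that the transition region must lie inside $U\cap V$, so that both recovery sequences are usable there: this holds because, being disjoint from $Z$, that region sits in $T\subset V$, and being a thin shell around $\overline{Z}\subset U$ it also sits in $U$.
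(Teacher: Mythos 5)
Your proof is correct and follows essentially the same De Giorgi slicing argument as the paper: Urysohn cut-offs across $q$ disjoint shells around $\overline{Z}$, a pigeonhole choice of shell, and the observation that the transition error $D\varphi\otimes(v_n-w_n)$ is harmless because $\|v_n-w_n\|_{L^p_\mu}\to0$ while the remaining shell contributions sum to a $q$-independent constant. The only (immaterial) difference is that you run the argument in the $\widehat{L}_x$/$\nabla_\mu$ formulation via Lemma \ref{LeMMa-Main-Convex-Theorem} rather than with $L_x$, $\nabla$ and sequences in $\mathcal{A}(X;\RR^m)$, a variant the paper itself points out is equivalent.
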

\begin{proof}[\bf Proof of Lemma \ref{LeMMa-MaiN-TheOReM1}]
Let $\{u_n\}_n$ and $\{v_n\}_n$ be two sequences in $\mathcal{A}(X;\RR^m)$ such that:
\begin{eqnarray}
&& u_n\to u\hbox{ in }L^p_\mu(X;\RR^m);\label{PrOoF-MT2-EquA1}\\
&& v_n\to u\hbox{ in }L^p_\mu(X;\RR^m);\label{PrOoF-MT2-EquA2}\\
&& \lim_{n\to\infty}\int_UL_x(\nabla u_n(x))d\mu(x)=\mathcal{S}_u(U)<\infty;\label{PrOoF-MT2-EquA3}\\
&& \lim_{n\to\infty}\int_VL_x(\nabla v_n(x))d\mu(x)=\mathcal{S}_u(V)<\infty.\label{PrOoF-MT2-EquA4}
\end{eqnarray}
Fix $\delta\in]0,{\rm dist}(Z,\partial U)[$ with $\partial U:=\overline{U}\setminus U$, fix any $n\geq 1$ and any $q\geq 1$ and consider $W^-_i,W^+_i\subset X$ given by:
\begin{itemize} 
\item[$\bullet$]$W^-_i:=Z^-\left({\delta\over 3}+{(i-1)\delta\over 3q}\right)=\left\{x\in X:{\rm dist}(x,Z)\leq {\delta\over 3}+{(i-1)\delta\over 3q}\right\}$;
\item[$\bullet$]$W^+_i:=Z^+\left({\delta\over 3}+{i\delta\over 3q}\right)=\left\{x\in X:{\delta\over 3}+{i\delta\over 3q}\leq{\rm dist}(x,Z)\right\},$
\end{itemize}
where $i\in\{1,\cdots,q\}$. As $\mathcal{A}(X)$ satisfies the Uryshon property, for every $i\in\{1,\cdots,q\}$ there exists a Uryshon function $\varphi_i\in\mathcal{A}(X)$ for the pair $(W^+_i,W^-_i)$. Define $w_n^i\in\mathcal{A}(X;\RR^m)$ by 
$$
w^i_n:=\varphi_iu_n+(1-\varphi_i)v_n.
$$ 
Setting $W_i:=X\setminus (W^-_i\cup W^{+}_i)$ and using \eqref{Gradient-Property} and \eqref{Der-Prop-1} we have
$$
\nabla w_n^i=\left\{
\begin{array}{ll}
\nabla u_n&\hbox{in }W^-_i\\
D\varphi_i\otimes(u_n-v_n)+\varphi_i\nabla u_n+(1-\varphi_i)\nabla v_n&\hbox{in }W_i\\
\nabla v_n&\hbox{in }W^+_i.
\end{array}
\right.
$$
Noticing that $Z\cup T=((Z\cup T)\cap W^-_i)\cup(W\cap W_i)\cup(T\cap W^+_i)$ with $(Z\cup T)\cap W_i^-\subset U$, $T\cap W^+_i\subset V$ and $W:=T\cap\{x\in U:{\delta\over 3}<{\rm dist}(x,Z)<{2\delta\over 3}\}$ we deduce that 
\begin{eqnarray}
\int_{Z\cup T}L_x(\nabla w^i_n)d\mu&\leq&\int_UL_x(\nabla u_n)d\mu+\int_VL_x(\nabla v_n)d\mu\label{LayERs-Eq1}\\
&&+\int_{W\cap W_i}L_x(\nabla w^i_n)d\mu\nonumber
\end{eqnarray}
for all $i\in\{1,\cdots,q\}$. Moreover, from \eqref{p-polynomial-growth} we see that for each $i\in\{1,\cdots,q\}$,
\begin{eqnarray}
\int_{W\cap W_i}L_x(\nabla w^i_n)d\mu&\leq&\alpha\|D\varphi_i\|^p_{L^\infty_\mu(X;\RR^N)}\|u_n-v_n\|^p_{L^p_\mu(X;\RR^m)}\label{LayERs-Eq2}\\
&&+\alpha\int_{W\cap W_i}(1+|\nabla u_n|^p+|\nabla v_n|^p)d\mu\nonumber
\end{eqnarray}
with $\alpha:=2^{2p}c$. Substituting \eqref{LayERs-Eq2} into \eqref{LayERs-Eq1} and averaging these inequalities, it follows that for every $n\geq 1$ and every $q\geq 1$, there exists $i_{n,q}\in\{1,\cdots,q\}$ such that
\begin{eqnarray}
\int_{Z\cup T}L_x(\nabla w_n^{i_{n,q}})d\mu&\leq&\int_UL_x(\nabla u_n)d\mu+\int_VL_x(\nabla v_n)d\mu\nonumber\\
&&+{\alpha\over q}\sum_{i=1}^q\|D\varphi_i\|^p_{L^\infty_\mu(X;\RR^N)}\|u_n-v_n\|^p_{L^p_\mu(X;\RR^m)}\nonumber\\
&&+{\alpha\over q}\left(\mu(X)+\int_U|\nabla u_n|^pd\mu+\int_V|\nabla v_n|^pd\mu\right).\nonumber
\end{eqnarray}
On the other hand, by \eqref{PrOoF-MT2-EquA1} and \eqref{PrOoF-MT2-EquA2} we have:
\begin{itemize}
\item[$\bullet$] $\displaystyle\lim_{n\to\infty}\|u_n-v_n\|^p_{L^p_\mu(X;\RR^m)}=0$;
\item[$\bullet$] $\displaystyle\lim_{n\to\infty}\|w_n^{i_{n,q}}-u\|^p_{L^p_\mu(X;\RR^m)}=0$ for all $q\geq 1$.
\end{itemize}
Moreover, using \eqref{PrOoF-MT2-EquA3} and \eqref{PrOoF-MT2-EquA4} together with \eqref{p-coercivity} we see that:
\begin{itemize}
\item[$\bullet$] $\displaystyle\limsup_{n\to\infty}\int_U|\nabla u_n(x)|^pd\mu(x)<\infty$;
\item[$\bullet$] $\displaystyle\limsup_{n\to\infty}\int_V|\nabla v_n(x)|^pd\mu(x)<\infty$.
\end{itemize}
Letting $n\to\infty$ (and taking \eqref{PrOoF-MT2-EquA3} and \eqref{PrOoF-MT2-EquA4} into account) we deduce that for every $q\geq 1$,
\begin{equation}\label{Limit-AvErAgE-LaYerS}
\mathcal{S}_u(Z\cup T)\leq\liminf_{n\to\infty}\int_{Z\cup T}L_x(\nabla w_n^{i_{n,q}}(x))d\mu(x)\leq\mathcal{S}_u(U)+\mathcal{S}_u(V)+{\hat\alpha\over q}
\end{equation}
with $\hat\alpha:=\alpha(\mu(X)+\limsup_{n\to\infty}\int_U|\nabla u_n(x)|^pd\mu(x)+\limsup_{n\to\infty}\int_V|\nabla v_n(x)|^pd\mu(x))$, and \eqref{SubAddiTive-Goal} follows from \eqref{Limit-AvErAgE-LaYerS} by letting $q\to\infty$. 
\end{proof}

\medskip

We now prove \eqref{Subadditivity-First-Goal}. Fix $A,B\in\mathcal{O}(X)$. Fix any $\eps>0$ and consider $C,D\in\mathcal{O}(X)$ such that $\overline{C}\subset A$, $\overline{D}\subset B$ and 
$$
\int_Ec(1+|\nabla_\mu u(x)|^p)d\mu(x)<\eps
$$ 
with $E:=A\cup B\setminus\overline{C\cup D}$. Then $\mathcal{S}_u(E)\leq\eps$ by \eqref{ImPorTanT-EqUAtION}. Let $\hat C,\hat D\in\mathcal{O}(X)$ be such that $\overline{C}\subset\hat C$, $\overline{\hat C}\subset A$, $\overline{D}\subset\hat D$ and $\overline{\hat D}\subset B$. Applying Lemma \ref{LeMMa-MaiN-TheOReM1} with $U=\hat C\cup\hat D$, $V=T=E$ and $Z=C\cup D$ (resp. $U=A$, $V=B$, $Z=\hat C$ and $T=\hat D$) we obtain
$$
\mathcal{S}_u(A\cup B)\leq\mathcal{S}_u(\hat C\cup\hat D)+\eps\hbox{ \big(resp. }\mathcal{S}_u(\hat C\cup\hat D)\leq\mathcal{S}_u(A)+\mathcal{S}_u(B)\big),
$$
and \eqref{Subadditivity-First-Goal} follows by letting $\eps\to0$. $\blacksquare$

\begin{remark}
The method used in the proof of Lemma \ref{LeMMa-MaiN-TheOReM1} is a variant of the so-called De Giorgi's slicing method. Note that the proof of Lemma \ref{LeMMa-MaiN-TheOReM1} can be rewritten (exactly in the same way) in considering \eqref{NeW-FoRmUlA} together with \eqref{EqUaT-ThEoReM-SecoND-ResuLT1}, in using \eqref{MU-Gradient-Property} and \eqref{MU-Der-Prop-1} instead of \eqref{Gradient-Property} and \eqref{Der-Prop-1}, and in replacing (in the text of the proof) the integrand ``$L_x$" by ``$\widehat{L}_x$", the space ``$\mathcal{A}(X;\RR^m)$" by ``$W^{1,p}_\mu(X;\RR^m)$", the operator ``$D$" by ``$D_\mu$" and the operator ``$\nabla$" by ``$\nabla_\mu$". On the other hand, by De Giorgi's slicing method we can also establish the following result.
\begin{lemma}\label{AddiTiONaL-LEMma}
If \eqref{EqUaT-ThEoReM-SecoND-ResuLT1} holds (which is the case when \eqref{p-coercivity} and \eqref{p-polynomial-growth} are satisfied) then 
\begin{equation}\label{NeW-FoRmUlA-Bis}
\overline{E}(u;A)=\inf\left\{\liminf_{n\to\infty}\int_A\widehat{L}_x(\nabla_\mu u_n(x))d\mu(x):W^{1,p}_{\mu,0}(A;\RR^m)\ni u_n-u\stackrel{L^p_\mu}{\to} 0\right\}
\end{equation}
for all $u\in W^{1,p}_\mu(X;\RR^m)$. 
\end{lemma}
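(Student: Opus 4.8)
The plan is to derive Lemma~\ref{AddiTiONaL-LEMma} from the relaxation formula \eqref{NeW-FoRmUlA} of Lemma~\ref{LeMMa-Main-Convex-Theorem} by a De Giorgi slicing argument, the only new feature compared with the proof of Lemma~\ref{LeMMa-MaiN-TheOReM1} being a cut-off near $\partial A$ that forces the recovery sequence to agree with $u$ off $A$. Write $\mathcal{R}(u;A)$ for the right-hand side of \eqref{NeW-FoRmUlA-Bis}. One inequality is immediate: if $u_n-u\in W^{1,p}_{\mu,0}(A;\RR^m)$ for every $n$ and $u_n-u\to0$ in $L^p_\mu(X;\RR^m)$, then $\{u_n\}\subset W^{1,p}_\mu(X;\RR^m)$ with $u_n\to u$ in $L^p_\mu(X;\RR^m)$, hence $\{u_n\}$ is admissible in \eqref{NeW-FoRmUlA} and so $\mathcal{R}(u;A)\geq\overline{E}(u;A)$. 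The content of the lemma is the reverse inequality $\mathcal{R}(u;A)\leq\overline{E}(u;A)$, which I may prove assuming $\overline{E}(u;A)<\infty$.

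So fix $\eta>0$. Since $\mu$ is finite and $c(1+|\nabla_\mu u|^p)\in L^1_\mu(X)$, I first choose $\delta>0$ so small that the open set $Z:=\{x\in A:{\rm dist}(x,X\setminus A)>\delta\}$ (whose closure is compact, with ${\rm dist}(Z,X\setminus A)\geq\delta$) satisfies $\int_{A\setminus Z}c(1+|\nabla_\mu u(x)|^p)d\mu(x)<\eta$; this is possible because $A\setminus Z\downarrow\emptyset$ as $\delta\downarrow0$. Next, by \eqref{NeW-FoRmUlA} I pick $\{u_n\}\subset W^{1,p}_\mu(X;\RR^m)$ with $u_n\to u$ in $L^p_\mu(X;\RR^m)$ and, after extracting a subsequence, $\int_A\widehat{L}_x(\nabla_\mu u_n(x))d\mu(x)\to\overline{E}(u;A)$. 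Then, for a large integer $q$, I slice the corridor between $Z$ and $X\setminus A$ into $q$ pairwise disjoint shells $C_1,\dots,C_q\subset A\setminus Z$ cut out by sets $Z^-(\eps)$ and $Z^+(\eps')$ with $\eps,\eps'\in\,]0,\delta/2[$, all of which lie in $\mathcal{K}(X)$ because $Z$ is open. The Urysohn property of $\mathcal{A}(X)$ then furnishes $\varphi_i\in\mathcal{A}(X)$ with $0\leq\varphi_i\leq1$, $\varphi_i\equiv1$ on the $Z$-side of $C_i$, $\varphi_i\equiv0$ on the other side (in particular $\varphi_i\equiv0$ on $X\setminus A$, since ${\rm dist}(X\setminus A,Z)\geq\delta$), and, by \eqref{MU-Der-Prop-1}, $D_\mu\varphi_i=0$ $\mu$-a.e. outside $C_i$.

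Set $u_n^i:=\varphi_iu_n+(1-\varphi_i)u\in W^{1,p}_\mu(X;\RR^m)$. Since $u_n^i-u=\varphi_i(u_n-u)$ vanishes on $X\setminus A$, approximating $u_n-u$ in $W^{1,p}_\mu$ by functions of $\mathcal{A}(X;\RR^m)$, multiplying by $\varphi_i$ and invoking \eqref{MU-Gradient-Property} and \eqref{MU-Gradient-Property-BiS} shows that $u_n^i-u\in W^{1,p}_{\mu,0}(A;\RR^m)$; since moreover $u_n^i-u\to0$ in $L^p_\mu(X;\RR^m)$, each $\{u_n^i\}_n$ is admissible for $\mathcal{R}(u;A)$. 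For the energy estimate one proceeds exactly as in Lemma~\ref{LeMMa-MaiN-TheOReM1}: $\nabla_\mu u_n^i=\nabla_\mu u_n$ on the $Z$-side of $C_i$, $\nabla_\mu u_n^i=\nabla_\mu u$ on the far side, and on $C_i$ one has $\nabla_\mu u_n^i=\varphi_i\nabla_\mu u_n+(1-\varphi_i)\nabla_\mu u+D_\mu\varphi_i\otimes(u_n-u)$, so the growth half of \eqref{EqUaT-ThEoReM-SecoND-ResuLT1} yields
$$
\int_A\widehat{L}_x(\nabla_\mu u_n^i)d\mu\leq\int_A\widehat{L}_x(\nabla_\mu u_n)d\mu+\eta+\gamma\Big(\mu(C_i)+\int_{C_i}\big(|\nabla_\mu u_n|^p+|\nabla_\mu u|^p\big)d\mu\Big)+\gamma\|D_\mu\varphi_i\|^p_{L^\infty_\mu(X;\RR^N)}\|u_n-u\|^p_{L^p_\mu(X;\RR^m)}
$$
for a constant $\gamma=\gamma(c,p)$. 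Averaging over $i=1,\dots,q$ and using the disjointness $\sum_{i=1}^q\int_{C_i}\leq\int_A$ produces, for each $n$, an index $i_n\in\{1,\dots,q\}$ for which the left-hand side (with $i=i_n$) is at most $\int_A\widehat{L}_x(\nabla_\mu u_n)d\mu+\eta$ plus $\gamma/q$ times the quantity $\mu(A)+\int_A(|\nabla_\mu u_n|^p+|\nabla_\mu u|^p)d\mu+\big(\sum_{i=1}^q\|D_\mu\varphi_i\|^p_{L^\infty_\mu(X;\RR^N)}\big)\|u_n-u\|^p_{L^p_\mu(X;\RR^m)}$. Letting $n\to\infty$ along the diagonal sequence $\{u_n^{i_n}\}_n$ (which is again admissible for $\mathcal{R}(u;A)$), the cut-off term dies because the $\varphi_i$ are fixed and $u_n\to u$ in $L^p_\mu$, while $\int_A|\nabla_\mu u_n|^pd\mu$ stays bounded by the coercivity half of \eqref{EqUaT-ThEoReM-SecoND-ResuLT1} and $\int_A\widehat{L}_x(\nabla_\mu u_n)d\mu\to\overline{E}(u;A)$; this gives $\mathcal{R}(u;A)\leq\overline{E}(u;A)+\eta+C_{u,A}/q$, and letting $q\to\infty$ and then $\eta\to0$ finishes the argument.

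The main obstacle --- in fact essentially the only delicate point --- is the bookkeeping of the three parameters: $\delta$ (hence $Z$ and the whole family $\{\varphi_i\}$) must be frozen before $q\to\infty$, and $q$ (hence $\{\varphi_i\}$) must be frozen before $n\to\infty$, so that the possibly huge factor $\sum_i\|D_\mu\varphi_i\|^p_{L^\infty_\mu(X;\RR^N)}$ --- no quantitative Urysohn estimate being available at this level of generality --- is a fixed constant at the moment it is annihilated by $\|u_n-u\|^p_{L^p_\mu}\to0$; and it is precisely the disjointness of the $q$ shells that turns the a priori $n$-nonuniform quantity $\int_{A\setminus Z}|\nabla_\mu u_n|^pd\mu$ into the harmless $O(1/q)$ remainder.
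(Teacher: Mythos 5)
Your argument is correct and is essentially the paper's own proof: both derive the hard inequality from the relaxation formula \eqref{NeW-FoRmUlA} by De Giorgi slicing in a boundary layer of $A$, cutting a recovery sequence off against $u$ near $\partial A$ so that the modified sequence becomes admissible for the right-hand side of \eqref{NeW-FoRmUlA-Bis}, with the gradient cross-terms killed by averaging over the $q$ disjoint shells. The only (harmless) divergence is your treatment of the outer-layer term $\int\widehat{L}_x(\nabla_\mu u)\,d\mu$ over the region where the cut-off vanishes: you make it $<\eta$ by shrinking $\delta$ \emph{before} introducing $q$, whereas the paper absorbs it into an $O(1/q)$ remainder; your bookkeeping is at least as careful, since the sets $A\cap W_i^+$ in the paper's averaging are nested rather than disjoint.
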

\begin{proof}
Fix $u\in W^{1,p}_\mu(X;\RR^m)$ and $A\in\mathcal{O}(X)$ and denote the right-hand side of \eqref{NeW-FoRmUlA-Bis} by $\overline{\mathcal{E}}(u;A)$. Taking Lemma \ref{LeMMa-Main-Convex-Theorem} into account and noticing that $W^{1,p}_{\mu,0}(A;\RR^m)\subset W^{1,p}_\mu(X;\RR^m)$ we have $\overline{\mathcal{E}}(u;A)\geq\overline{E}(u;A)$. Thus, it remains to prove that
\begin{equation}\label{GOal-Lemma-Bis}
\overline{\mathcal{E}}(u;A)\leq\overline{E}(u;A).
\end{equation}
Let $\{u_n\}_n\subset W^{1,p}_\mu(X;\RR^m)$ be such that
\begin{eqnarray}
&& u_n\to u\hbox{ in }L^p_\mu(X;\RR^m)\label{PrOoF-MT2-EquA1-BiS};\\
&& \lim_{n\to\infty}\int_A\widehat{L}_x(\nabla_\mu u_n(x))d\mu(x)=\overline{E}(u;A)<\infty.\label{PrOoF-MT2-EquA3-BiS}
\end{eqnarray}
Fix $\delta>0$ and set $A_\delta:=\{x\in A:{\rm dist}(x,\partial A)>\delta\}$ with $\partial A:=\overline{A}\setminus A$. Fix any $n\geq 1$ and any $q\geq 1$ and consider $W^-_i,W^+_i\subset X$ given by
\begin{itemize} 
\item[$\bullet$]$W^-_i:=A^-_\delta\left({\delta\over 3}+{(i-1)\delta\over 3q}\right)=\left\{x\in X:{\rm dist}(x,A_\delta)\leq {\delta\over 3}+{(i-1)\delta\over 3q}\right\}$;
\item[$\bullet$]$W^+_i:=A^+_\delta\left({\delta\over 3}+{i\delta\over 3q}\right)=\left\{x\in X:{\delta\over 3}+{i\delta\over 3q}\leq{\rm dist}(x,A_\delta)\right\}$,
\end{itemize}
where $i\in\{1,\cdots,q\}$. (Note that $W^-_i\subset A$.) As $\mathcal{A}(X)$ satisfies the Uryshon property, for every $i\in\{1,\cdots,q\}$ there exists a Uryshon function $\varphi_i\in\mathcal{A}(X)$ for the pair $(W^+_i,W^-_i)$. Define $w_n^i:X\to\RR^m$  by 
$$
w^i_n:=\varphi_iu_n+(1-\varphi_i)u.
$$ 
Then $w_n^i-u\in W^{1,p}_{\mu,0}(A;\RR^m)$ (see Remark \ref{Remark-MU-Gradient-Property-BiS}). Setting $W_i:=X\setminus (W^-_i\cup W^{+}_i)\subset A$ and using \eqref{MU-Gradient-Property-BiS} and \eqref{MU-Der-Prop-1} we have
$$
\nabla_\mu w_n^i=\left\{
\begin{array}{ll}
\nabla_\mu u_n&\hbox{in }W^-_i\\
D_\mu\varphi_i\otimes(u_n-u)+\varphi_i\nabla_\mu u_n+(1-\varphi_i)\nabla_\mu u&\hbox{in }W_i\\
\nabla_\mu u&\hbox{in }W^+_i.
\end{array}
\right.
$$
Noticing that $A=W^-_i\cup W_i\cup(A\cap W^+_i)$ we deduce that for every $i\in\{1,\cdots,q\}$,
\begin{eqnarray}
\int_A\widehat{L}_x(\nabla_\mu w^i_n)d\mu&\leq&\int_A\widehat{L}_x(\nabla_\mu u_n)d\mu+\int_{A\cap W^+_i}\widehat{L}_x(\nabla_\mu u)d\mu\label{LayERs-Eq1-BiS}\\
&&+\int_{W_i}\widehat{L}_x(\nabla_\mu w^i_n)d\mu.\nonumber
\end{eqnarray}
Moreover, from the second inequality in \eqref{EqUaT-ThEoReM-SecoND-ResuLT1} we see that for each $i\in\{1,\cdots,q\}$,
\begin{eqnarray}
\int_{W_i}\widehat{L}_x(\nabla_\mu w^i_n)d\mu&\leq&\alpha\|D_\mu\varphi_i\|^p_{L^\infty_\mu(X;\RR^N)}\|u_n-u\|^p_{L^p_\mu(X;\RR^m)}\label{LayERs-Eq2-BiS}\\
&&+\alpha\int_{W_i}(1+|\nabla_\mu u_n|^p+|\nabla_\mu u|^p)d\mu\nonumber
\end{eqnarray}
with $\alpha:=2^{2p}c$. Substituting \eqref{LayERs-Eq2-BiS} into \eqref{LayERs-Eq1-BiS} and averaging these inequalities, it follows that for every $n\geq 1$ and every $q\geq 1$, there exists $i_{n,q}\in\{1,\cdots,q\}$ such that
\begin{eqnarray}
\int_{A}\widehat{L}_x(\nabla_\mu w_n^{i_{n,q}})d\mu&\leq&\int_A\widehat{L}_x(\nabla_\mu u_n)d\mu+{1\over q}\int_A\widehat{L}_x(\nabla_\mu u)d\mu\nonumber\\
&&+{\alpha\over q}\sum_{i=1}^q\|D_\mu\varphi_i\|^p_{L^\infty_\mu(X;\RR^N)}\|u_n-u\|^p_{L^p_\mu(X;\RR^m)}\nonumber\\
&&+{\alpha\over q}\left(\mu(A)+\int_A|\nabla_\mu u_n|^pd\mu+\int_A|\nabla_\mu u|^pd\mu\right).\nonumber
\end{eqnarray}
On the other hand, by \eqref{PrOoF-MT2-EquA1-BiS} we have
$$
\lim_{n\to\infty}\|w_n^{i_{n,q}}-u\|^p_{L^p_\mu(X;\RR^m)}=0\hbox{ for all }q\geq 1.
$$
Moreover, using \eqref{PrOoF-MT2-EquA3-BiS}  together with the first inequality in \eqref{EqUaT-ThEoReM-SecoND-ResuLT1} we see that
$$
\limsup_{n\to\infty}\int_A|\nabla_\mu u_n(x)|^pd\mu(x)<\infty.
$$
Letting $n\to\infty$ (and taking \eqref{PrOoF-MT2-EquA3-BiS} into account) we deduce that for every $q\geq 1$,
\begin{equation}\label{Limit-AvErAgE-LaYerS-BiS}
\overline{\mathcal{E}}(u;A)\leq\liminf_{n\to\infty}\int_{A}\widehat{L}_x(\nabla_\mu w_n^{i_{n,q}})d\mu\leq\overline{E}(u;A)+{1\over q}\int_A\widehat{L}_x(\nabla_\mu u)d\mu+{\hat\alpha\over q}
\end{equation}
with $\hat\alpha:=\alpha(\mu(A)+\limsup_{n\to\infty}\int_A|\nabla_\mu u_n(x)|^pd\mu(x)+\int_A|\nabla_\mu u(x)|^pd\mu(x))$, and \eqref{GOal-Lemma-Bis} follows from \eqref{Limit-AvErAgE-LaYerS-BiS} by letting $q\to\infty$. 
\end{proof}
\end{remark}

\subsection{Proof of Theorem \ref{IR-Theorem}} The proof is adapted from \cite[Lemmas 3.3 and 3.5]{boufonmas98} (see also \cite[\S 2]{bouchitte-bellieud00}). Fix $u\in W^{1,p}_\mu(X;\RR^m)$ and define the set function ${\rm m}_u:\mathcal{O}(X)\to[0,\infty]$ by
$$
{\rm m}_u(A):=\inf\left\{\int_{A}\widehat{L}_x(\nabla_\mu v(x))d\mu(x):v-u\in W^{1,p}_{\mu,0}(A;\RR^m)\right\}.
$$
For each $\eps>0$ and each $A\in\mathcal{O}(X)$, denote the class of all countable family $\{Q_i:=Q_{\rho_i}(x_i)\}_{i\in I}$ of disjoint open balls of $A$ with $x_i\in A$, $\rho_i={\rm diam}(Q_i)\in]0,\eps[$ and $\mu(\partial Q_i)=0$ such that $\mu(A\setminus\cup_{i\in I}Q_i)=0$ by $\mathcal{V}_\eps(A)$, consider ${\rm m}_u^\eps:\mathcal{O}(X)\to[0,\infty]$ given by
$$
{\rm m}_u^\eps(A):=\inf\left\{\sum_{i\in I}{\rm m}_u(Q_i):\{Q_i\}_{i\in I}\in \mathcal{V}_\eps(A)\right\}
$$
and define ${\rm m}^*_u:\mathcal{O}(X)\to[0,\infty]$ by 
$$
{\rm m}^*_u(A):=\sup_{\eps>0}{\rm m}^\eps_u(A)=\lim_{\eps\to0}{\rm m}_u^\eps(A).
$$
(Note that as $X$ satisfies the Vitali covering theorem, see (C$_2$) and Remark \ref{ReMArK-VItALi-For-OpEN-SEtS}, we have $\mathcal{V}_\eps(A)\not=\emptyset$ for all $A\in\mathcal{O}(X)$ and all $\eps>0$.) 

\subsection*{Step 1. We prove that \boldmath${\rm m}^*_u(A)=\overline{E}(u;A)$\unboldmath\ for all \boldmath$A\in\mathcal{O}(X)$\unboldmath}

Taking Lemma \ref{AddiTiONaL-LEMma} into account, it is easy to see that ${\rm m}_u(A)\leq\overline{E}(u;A)$ and so ${\rm m}^*_u(A)\leq \overline{E}(u;A)$ (because in the proof of Theorem \ref{N-C-IRT} it is established that $\overline{E}(u;\cdot)$ can be uniquely extended to a finite positive Radon measure on $X$). Hence, it remains to prove that
\begin{equation}\label{EEEEqqqq}
\overline{E}(u;A)\leq {\rm m}^*_u(A)
\end{equation}
with ${\rm m}^*_u(A)<\infty$. Fix any $\eps>0$. Given $A\in\mathcal{O}(X)$, by definition of ${\rm m}^\eps_u(A)$, there exists $\{Q_{i}\}_{i\in I}\in\mathcal{V}_\eps(A)$  such that 
\begin{equation}\label{EEEqqq1}
\sum_{i\in I}{\rm m}_u(Q_{i})\leq {\rm m}^\eps_u(A)+{\eps\over 2}.
\end{equation}
Given any $i\in I$, by definition of ${\rm m}_u(Q_i)$, there exists $v_i\in W^{1,p}_{\mu}(Q_{i};\RR^m)$ such that $v_i-u\in W^{1,p}_{\mu,0}(Q_{i};\RR^m)$ and
\begin{equation}\label{EEEqqq1-bis}
\int_{Q_i}\widehat{L}_x(\nabla_\mu v_i(x))d\mu(x)\leq {\rm m}_u(Q_i)+{\eps \mu(Q_i)\over 2\mu(A)}.
\end{equation}
 Define $u_\eps:X\to\RR^m$ by 
$$
u_\eps:=\left\{
\begin{array}{ll}
u&\hbox{in }X\setminus A\\
v_i&\hbox{in }Q_{i}.
\end{array}
\right.
$$
Then $u_\eps-u\in W^{1,p}_{\mu,0}(A;\RR^m)$. Moreover, because of (C$_0$), $\nabla_\mu u_\eps(x)=\nabla_\mu v_i(x)$ for $\mu$-a.a. $x\in Q_i$. From \eqref{EEEqqq1} and \eqref{EEEqqq1-bis} we see that
\begin{equation}\label{EEEqqq0}
\int_A\widehat{L}_x(\nabla_\mu u_\eps(x))d\mu(x)\leq {\rm m}^\eps_u(A)+\eps.
\end{equation}
On the other hand, we have
\begin{eqnarray*}
\|u_\eps-u\|^{p}_{L^{\chi p}_\mu(X;\RR^m)}=\left(\int_A|u_\eps-u|^{\chi p}d\mu\right)^{1\over\chi}&=&\left(\sum_{i\in I}\int_{Q_{i}}|v_i-u|^{\chi p}d\mu\right)^{1\over\chi}\\
&\leq&\sum_{i\in I}\left(\int_{Q_{i}}|v_i-u|^{\chi p}d\mu\right)^{1\over\chi}
\end{eqnarray*}
with $\chi\geq 1$ given by (C$_1$). As $X$ supports a $p$-Sobolev inequality, see (C$_1$) and \eqref{Poincare-Inequality}, and ${\rm diam}(Q_i)\in]0,\eps[$ for all $i\in I$, we have
$$
\|u_\eps-u\|^{p}_{L^{\chi p}_\mu(X;\RR^m)}\leq\eps^{p} {K}^{p}\sum_{i\in I}\int_{Q_{i}}|\nabla_\mu v_i-\nabla_\mu u|^pd\mu
$$
with $K>0$, and so
\begin{equation}\label{EEEqqq2}
\|u_\eps-u\|^{p}_{L^{\chi p}_\mu(X;\RR^m)}\leq 2^p\eps^{p} {K}^{p}\sum_{i\in I}\left(\int_{Q_{i}}|\nabla_\mu v_i|^pd\mu+\int_A|\nabla_\mu u|^pd\mu\right).
\end{equation}
Taking the first inequality in \eqref{EqUaT-ThEoReM-SecoND-ResuLT1} and \eqref{EEEqqq1} into account, from \eqref{EEEqqq2} we deduce that
$$
\|u_\eps-u\|^p_{L^{\chi p}_\mu(X;\RR^m)}\leq 2^p{K}^{p}\eps^{p}\left({1\over C}({\rm m}^\eps_u(A)+\eps)+\int_A|\nabla_\mu u|^pd\mu\right)
$$
which shows that $u_\eps\to u$ in $L^{\chi p}_\mu(X;\RR^m)$ because $\lim_{\eps\to0}{\rm m}_u^\eps(A)={\rm m}^*_u(A)<\infty$. Hence  $u_\eps\to u$ in $L^{p}_\mu(X;\RR^m)$ since $\chi p\geq p$, and \eqref{EEEEqqqq} follows from \eqref{EEEqqq0} by letting $\eps\to 0$ (and by noticing that $\overline{E}(u;A)\leq\liminf_{\eps\to0}\int_A\widehat{L}_x(\nabla_\mu u_\eps(x))d\mu(x)$).

\subsection*{Step 2. We prove that \boldmath$\lim\limits_{\rho\to0}{{\rm m}_u^*(Q_\rho(x))\over\mu(Q_\rho(x))}=\lim\limits_{\rho\to 0}{{\rm m}_u(Q_\rho(x))\over\mu(Q_\rho(x))}$\unboldmath\ for \boldmath$\mu$\unboldmath-a.a. \boldmath$x\in X$\unboldmath}

From Step 1 we have ${\rm m}_u^*=\overline{E}(u;\cdot)$, hence ${\rm m}^*_u\geq{\rm m}_u$ and so $\lim_{\rho\to0}{{\rm m}^*_u(Q_\rho(x))\over\mu(Q_\rho(x))}\geq\limsup_{\rho\to 0}{{\rm m}_u(Q_\rho(x))\over\mu(Q_\rho(x))}$ for $\mu$-a.a. $x\in X$. Thus, it remains to prove that
\begin{equation}\label{EEEEEE_GoAL}
\lim_{\rho\to 0}{{\rm m}^*_u(Q_\rho(x))\over\mu(Q_\rho(x))}\leq\liminf_{\rho\to 0}{{\rm m}_u(Q_\rho(x))\over\mu(Q_\rho(x))}\hbox{ for }\mu\hbox{-a.a. }x\in X.
\end{equation}
Fix any $t>0$. Denote the class of all open balls $Q_\rho(x)$, with $x\in X$ and $\rho>0$, such that ${\rm m}^*_u(Q_{\rho}(x))>{\rm m}_u(Q_{\rho}(x))+t\mu(Q_{\rho}(x))$ by $\mathcal{G}_t$ and define $N_t\subset X$ by
$$
N_t:=\Big\{x\in X:\forall\delta>0\ \exists\rho\in]0,\delta[\ Q_\rho(x)\in\mathcal{G}_t\Big\}.
$$
Fix any $\eps>0$. Using the definition of $N_t$, we can assert that for each $x\in K$ there exists $\{\rho_{x,n}\}_n\subset]0,\eps[$ with $\rho_{x,n}\to0$ as $n\to\infty$ such that for every $n\geq 1$, $\mu(\partial Q_{\rho_{x,n}}(x))=0$ and $Q_{\rho_{x,n}}(x)\in\mathcal{G}_t$. Consider the family $\mathcal{F}_0$ of closed balls in $X$ given by
$$
\mathcal{F}_0:=\left\{\overline{Q}_{\rho_{x,n}}(x):x\in N_t\hbox{ and }n\geq 1\right\}.
$$
Then $\inf\left\{r>0:\overline{Q}_r(x)\in\mathcal{F}_0\right\}=0$ for all $x\in N_t$. As $X$ satisfies the Vitali covering theorem, there exists a disjointed countable subfamily $\{\overline{Q}_i\}_{i\in I_0}$ of closed balls of $\mathcal{F}_0$ (with $\mu(\partial Q_i)=0$ and ${\rm diam}(Q_i)\in]0,\eps[$) such that
$$
N_t\subset\Big(\cupp_{i\in I_0}\overline{Q}_i\Big)\cup\Big(N_t\setminus\cupp_{i\in I_0}\overline{Q}_i\Big)\hbox{ with }\mu\Big(N_t\setminus\cupp_{i\in I_0}\overline{Q}_i\Big)=0. 
$$
If $\mu\big(\cup_{i\in I_0}\overline{Q}_i\big)=0$ then \eqref{EEEEEE_GoAL} will follows. Indeed, in this case we have $\mu(N_t)=0$, i.e., $\mu(X\setminus N_t)=\mu(X)$, and given $x\in X\setminus N_t$ there exists $\delta>0$ such that ${\rm m}^*_u(Q_{\rho}(x))\leq{\rm m}_u(Q_{\rho}(x))+t\mu(Q_{\rho}(x))$ for all $\rho\in]0,\delta[$. Hence $\lim_{\rho\to0}{{\rm m}^*_u(Q_\rho(x))\over\mu(Q_\rho(x))}\leq\liminf_{\rho\to0}{{\rm m}_u(Q_\rho(x))\over\mu(Q_\rho(x))}+t$ for all $t>0$, and \eqref{EEEEEE_GoAL} follows by letting $t\to0$. 

To establish that $\mu\big(\cupp_{i\in I_0}\overline{Q}_i\big)=0$ it is sufficient to prove that for every finite subset $J$ of $I_0$,
\begin{equation}\label{GoAl-PPPRRRoooFFF}
\mu\Big(\cupp_{i\in J}\overline{Q}_i\Big)=0.
\end{equation} 

As $X$ satisfies the Vitali covering theorem and $X\setminus \cupp_{i\in J}\overline{Q}_i$ is open, there exists a countable family $\{B_i\}_{i\in I}$ of disjoint open balls of $X\setminus \cupp_{i\in J}\overline{Q}_i$, with $\mu(\partial B_i)=0$ and ${\rm diam}(B_i)\in]0,\eps[$, such that
\begin{equation}\label{EqUaT-VitaL-1}
\mu\left(\Big(X\setminus\cupp_{i\in J}\overline{Q}_i\Big)\setminus \cupp_{i\in I}B_i\right)=\mu\left(X\setminus\Big(\cupp_{i\in I}B_i\Big)\cup\Big(\cupp_{i\in J}Q_i\Big)\right)=0.
\end{equation}
Recalling that ${\rm m}^*_u$ is the restriction to $\mathcal{O}(X)$ of a finite positive Radon measure  which is absolutely continuous with respect to $\mu$, from \eqref{EqUaT-VitaL-1} we see that
$$
{\rm m}^*_u(X)=\sum_{i\in I}{\rm m}^*_u(B_i)+\sum_{i\in J}{\rm m}^*_u(Q_i).
$$
Moreover, $Q_i\in\mathcal{G}_t$ for all $i\in J$, i.e., ${\rm m}^*_u(Q_i)>{\rm m}_u(Q_i)+t\mu(Q_i)$ for all $i\in J$, and ${\rm m}^*_u\geq {\rm m}_u$, hence 
$$
{\rm m}^*_u(X)\geq \sum_{i\in I}{\rm m}_u(B_i)+\sum_{i\in J}{\rm m}_u(Q_i)+t\mu\left(\cupp_{i\in J}Q_i\right).
$$
As $\{B_i\}_{i\in I}\cup\{Q_i\}_{i\in J}\in\mathcal{V}_\eps(X)$ we have $\sum_{i\in I}{\rm m}_u(B_i)+\sum_{i\in J}{\rm m}_u(Q_i)\geq{\rm m}_u^\eps(X)$, hence ${\rm m}^*_u(X)\geq {\rm m}^\eps_u(X)+t\mu(\cupp_{i\in J}Q_i)$, and \eqref{GoAl-PPPRRRoooFFF} follows by letting $\eps\to0$. \endproof

\subsection{Proof of Theorem \ref{FiNaL-Main-TheOrEM}} Taking Theorem \ref{IR-Theorem} into account it is sufficient to prove that for every $u\in W^{1,p}_\mu(X;\RR^m)$ and $\mu$-a.e. $x\in X$, we have:
\begin{eqnarray}
&&\lim_{\rho\to0}{{\rm m}_u(Q_\rho(x))\over\mu(Q_\rho(x))}\leq\lim_{\rho\to0}{{\rm m}_{u_x}(Q_\rho(x))\over\mu(Q_\rho(x))};\label{FiNaL-EqUa1}\\
&&\lim_{\rho\to0}{{\rm m}_u(Q_\rho(x))\over\mu(Q_\rho(x))}\geq\lim_{\rho\to0}{{\rm m}_{u_x}(Q_\rho(x))\over\mu(Q_\rho(x))},\label{FiNaL-EqUa2}
\end{eqnarray}
where $u_x\in W^{1,p}_\mu(X;\RR^m)$ is given by (A$_1$) (and satisfies \eqref{FinALAssuMpTIOnOne} and \eqref{FinALAssuMpTIOnTwo}) and for each $z\in W^{1,p}_\mu(X;\RR^m)$, ${\rm m}_z:\mathcal{O}(X)\to[0,\infty]$ is defined by
\begin{eqnarray*}
{\rm m}_z(A)&=&\inf\left\{\int_{A}\widehat{L}_y(\nabla_\mu v(y))d\mu(y):v\in W^{1,p}_{\mu,z}(A;\RR^m)\right\}\\
&=&\inf\left\{\int_{A}\widehat{L}_y(\nabla_\mu z(y)+\nabla_\mu w(y))d\mu(y):w\in W^{1,p}_{\mu,0}(A;\RR^m)\right\}.
\end{eqnarray*}
\begin{remark}\label{ReMarK-FiNAl-PrOOf-TheorEM}
From the proof of Theorem \ref{IR-Theorem} we can assert that for every $z\in W^{1,p}_\mu(X;\RR^m)$, the set function ${\rm m}^*_z:\mathcal{O}(X)\to[0,\infty]$ given by 
$$
{\rm m}^*_z(A):=\sup_{\eps>0}\inf\left\{\sum_{i\in I}{\rm m}_z(Q_i):\{Q_i\}_{i\in I}\in \mathcal{V}_\eps(A)\right\}
$$
(where $\mathcal{V}_\eps(A)$ denotes the class of all countable family $\{Q_i\}_{i\in I}$ of disjoint open balls of $A$ with ${\rm diam}(Q_i)\in]0,\eps[$ and $\mu(\partial Q_i)=0$ such that $\mu(A\setminus\cup_{i\in I}Q_i)=0$) is the restriction to $\mathcal{O}(X)$ of a Radon measure on $X$ which absolutely continuous with respect to $\mu$. Moreover, ${\rm m}^*_z\geq{\rm m}_z$ and
\begin{equation}\label{FiNalRemarKEquatION}
\lim_{\rho\to0}{{\rm m}_z^*(Q_\rho(x))\over\mu(Q_\rho(x))}=\lim_{\rho\to 0}{{\rm m}_z(Q_\rho(x))\over\mu(Q_\rho(x))}.
\end{equation}
\end{remark}

We only give the proof of \eqref{FiNaL-EqUa1}. As the proof of \eqref{FiNaL-EqUa2} uses the same method, its detailled verification is left to the reader.

\subsection*{Proof of (\ref{FiNaL-EqUa1})} Fix any $\eps>0$. Fix any $t\in]0,1[$ and any $\rho\in]0,\eps[$. By definition of ${\rm m}_{u_x}(Q_{t\rho}(x))$, where there is no loss of generality in assuming that $\mu(\partial Q_{t\rho}(x))=0$, there exists $w\in W^{1,p}_{\mu,0}(Q_{t\rho}(x);\RR^m)$ such that
\begin{eqnarray}\label{End-PrOOf-EquatION-1}
\int_{Q_{t\rho}(x)}\widehat{L}_y(\nabla_\mu u(x)+\nabla_\mu w(y))d\mu(y)
&\leq& {\rm m}_{u_x}(Q_{t\rho}(x))+\eps\mu(Q_{\rho}(x))\label{FiNalPrOOf-EqN1}\\
&\leq&{\rm m}^*_{u_x}(Q_\rho(x))+\eps\mu(Q_\rho(x)),\nonumber
\end{eqnarray}
where we have used both the fact that ${\rm m}_{u_x}\leq {\rm m}^*_{u_x}$ and ${\rm m}^*_{u_x}$ is increasing (see Remark \ref{ReMarK-FiNAl-PrOOf-TheorEM}). From (A$_2$) there exists a Uryshon function $\varphi\in\mathcal{A}(X)$ for the pair $(X\setminus Q_{\rho}(x),\overline{Q}_{t\rho}(x))$ such that 
\begin{equation}\label{PlAtEAu-FunCtIOn-ProPerTy}
\|D_\mu\varphi\|_{L^p_\mu(X;\RR^N)}\leq {\alpha\over\rho(1-t)}
\end{equation} 
for some $\alpha>0$ (which does not depend on $\rho$). Define $v\in W^{1,p}_{\mu}(Q_\rho(x);\RR^m)$ by
$$
v:=\varphi u_x+(1-\varphi)u.
$$
Then $v-u\in W^{1,p}_{\mu,0}(Q_\rho(x);\RR^m)$. Using \eqref{MU-Gradient-Property} and \eqref{MU-Der-Prop-1} we have
$$
\nabla_\mu v=\left\{
\begin{array}{ll}
\nabla_{\mu}u(x)&\hbox{in }\overline{Q}_{t\rho}(x)\\
D_\mu\varphi\otimes(u_x-u)+\varphi\nabla_\mu u(x)+(1-\varphi)\nabla_\mu u&\hbox{in }Q_\rho(x)\setminus \overline{Q}_{t\rho}(x).
\end{array}
\right.
$$
As $w\in W^{1,p}_{\mu,0}(Q_{t\rho}(x);\RR^m)$ we have $v+w-u\in W^{1,p}_{\mu,0}(Q_\rho(x);\RR^m)$. Noticing that $\mu(\partial Q_{t\rho}(x))=0$ and, because of (C$_0$), $\nabla_\mu w(y)=0$ for $\mu$-a.a. $y\in Q_\rho(x)\setminus \overline{Q}_{t\rho}(x)$ and taking \eqref{End-PrOOf-EquatION-1}, the second inequality in \eqref{EqUaT-ThEoReM-SecoND-ResuLT1} and \eqref{PlAtEAu-FunCtIOn-ProPerTy} into account we deduce that
\begin{eqnarray}
\qquad{{\rm m}_u(Q_\rho(x))\over\mu(Q_\rho(x))}&\leq&\mint_{Q_\rho(x)}\widehat{L}_y(\nabla_\mu v+\nabla_\mu w)d\mu\label{FiRsTEquAtIOnofTHelaSTPrOOf}\\
&=&{1\over\mu(Q_\rho(x))}\int_{\overline{Q}_{t\rho}(x)}\widehat{L}_y(\nabla_\mu u(x)+\nabla_\mu w)d\mu\nonumber\\
&&+{1\over\mu(Q_\rho(x))}\int_{Q_\rho(x)\setminus \overline{Q}_{t\rho}(x)}\widehat{L}_y(\nabla_\mu v)d\mu\nonumber\\
&\leq &{{\rm m}^*_{u_x}(Q_\rho(x))\over\mu(Q_\rho(x))}+\eps\nonumber\\
&& +2^{2p}c\left({\alpha^p\over(1-t)^p}{1\over\rho^p}\mint_{Q_\rho(x)}|u-u_x|^pd\mu+{A_{\rho,t}\over\mu(Q_\rho(x))}\right)\nonumber
\end{eqnarray}
with
$$
A_{\rho,t}:=\mu(Q_\rho(x)\setminus Q_{t\rho}(x))|\nabla_\mu u(x)|^p+\int_{Q_\rho(x)\setminus Q_{t\rho}(x)}|\nabla_\mu u|^pd\mu.
$$
As $\mu$ is a doubling measure, see (A$_3$), we can assert that
$$
\lim_{r\to0}\mint_{Q_r(x)}\big||\nabla_\mu u(y)|^p-|\nabla_\mu u(x)|^p\big|d\mu(y)=0.
$$
But
\begin{eqnarray*}
{A_{\rho,t}\over\mu(Q_\rho(x))}&\leq& 2\left(1-{\mu(Q_{t\rho}(x)\over\mu(Q_\rho(x))}\right)|\nabla_\mu u(x)|^p\\
&&+\mint_{Q_\rho(x)}\big||\nabla_\mu u(y)|^p-|\nabla_\mu u(x)|^p\big|d\mu(y)
\end{eqnarray*}
and so
\begin{equation}\label{LiMItiNrhOofArhot}
\liminf_{\rho\to0}{A_{\rho,t}\over\mu(Q_\rho(x))}\leq 2\left(1-\limsup_{\rho\to0}{\mu(Q_{t\rho}(x))\over\mu(Q_\rho(x))}\right)|\nabla_\mu u(x)|^p.
\end{equation}
Letting $\rho\to 0$ in \eqref{FiRsTEquAtIOnofTHelaSTPrOOf} and using \eqref{FinALAssuMpTIOnTwo} and  \eqref{LiMItiNrhOofArhot} we see that
$$
\lim_{\rho\to0}{{\rm m}_u(Q_\rho(x))\over\mu(Q_\rho(x))}\leq \lim_{\rho\to0}{{\rm m}^*_{u_x}(Q_\rho(x))\over\mu(Q_\rho(x))}+\eps+2\left(1-\limsup_{\rho\to0}{\mu(Q_{t\rho}(x))\over\mu(Q_\rho(x))}\right)|\nabla_\mu u(x)|^p.
$$
Letting $t\to 1$ and using \eqref{DoublINgAssUMpTiON} we conclude that
$$
\lim_{\rho\to0}{{\rm m}_u(Q_\rho(x))\over\mu(Q_\rho(x))}\leq \lim_{\rho\to0}{{\rm m}^*_{u_x}(Q_\rho(x))\over\mu(Q_\rho(x))}+\eps
$$
and (taking \eqref{FiNalRemarKEquatION} into account) \eqref{FiNaL-EqUa1} follows by letting $\eps\to0$. \endproof

\end{document}